\documentclass[11pt]{amsart}
\usepackage{amsmath, amssymb, latexsym, amsthm,bm}
\usepackage{mathtools}
\usepackage{url}
\usepackage[T1]{fontenc}
\usepackage{mathrsfs}
\usepackage{color}
\usepackage{tikz}
\usepackage{graphicx}
\usepackage{amsbsy}
\usepackage{enumitem}
\usepackage{cite}

\def\BG{{\mathbf{\Gamma}}}

\makeatletter \@namedef{subjclassname@2020}{%
\textup{2020} Mathematics Subject Classification} \makeatother

\usepackage{xcolor}
\usepackage[hidelinks]{hyperref}

\usetikzlibrary{calc}
\usepackage{xspace}

\usepackage[mathlines]{lineno}

\usepackage[font=small]{caption}
\usepackage{mathtools,amsmath}
\usepackage{array}

\def\sphere{{{\mathbb S}^2}}

\def\rev#1{{\overleftarrow{#1}}}
\def\Neg#1{{{#1}^{-}}}

\renewcommand{\descriptionlabel}[1]%
{{\hglue -0.7 cm}\hspace{\labelsep}#1}

\newcommand*{\Scale}[2][4]{\scalebox{#1}{\ensuremath{#2}}}%

\newtheorem{theorem}{Theorem} 
\newtheorem{proposition}[theorem]{Proposition} 

\newtheorem{corollary}[theorem]{Corollary}
\newtheorem{lemma}[theorem]{Lemma}

\newtheorem{observation}[theorem]{Observation}

\theoremstyle{definition}

\def\sphere{{{\mathbb S}^2}}

\title[Deciding if a shadow resolves into a given link: linear-time algorithms]{Deciding if a shadow resolves into a given link: \\ linear-time algorithms}

\author[Alba]{Andrea Alba} \address{Instituto de F\'\i sica, Universidad Aut\'onoma de San Luis Potos\'{\i}, SLP 78000, Mexico} \email{\tt andrea.casillas@if.uaslp.mx}

\author[Salazar]{Gelasio Salazar} \address{Instituto de F\'\i sica, Universidad Aut\'onoma de San Luis Potos\'{\i}, SLP 78000, Mexico} \email{\tt gelasio.salazar@uaslp.mx}

\subjclass[2020]{Primary 57K10; Secondary 57M15, 05C10}

\date{August 19, 2026}

\begin{document}

\begin{abstract} A {\em shadow} (or {\em projection}) is obtained from a link diagram by ignoring the over/under information at each crossing. Given a fixed link $L$ we investigate the complexity of deciding whether an input shadow $S$ can be {\em resolved} into $L$, that is, whether we can assign over/under information to its crossings to obtain a diagram of a link isotopic to $L$. We show that if $L\in\{3_1,4_1,5_1,5_2,6_2,L2a1, L4a1, L5a1, L6n1\}$ then there exists a linear-time algorithm that decides whether an input shadow $S$ resolves into $L$. \end{abstract}

\maketitle

\section{Introduction} \label{sec:intro}

We work in the piecewise linear category, and links are hosted in the $3$-sphere ${\mathbb S}^3$. 

We recall that a {\em diagram} $D$ of a link $L$ is a regular projection of $L$ onto the $2$-sphere $\sphere$, together with over/under information at each of its crossings.  

If we ignore the over/under information in $D$ we obtain a {\em shadow} (or {\em projection}) $S$. Since the crossings of $S$ are transverse double points, we may regard $S$ as a $4$-regular graph embedded in $\sphere$. This is quite convenient as it allows us to use standard graph-theoretical terminology.

\vglue 0.4 cm
\noindent{\bf Remark. }{\em Throughout this paper shadows are regarded as $4$-regular graphs embedded in $\sphere$.}
\vglue 0.4 cm

We {\em resolve} a shadow $S$ by assigning over/under information to each vertex of $S$. The resulting diagram is a {\em resolution} of $S$. The shadow $S$ {\em resolves into} a link $L$ if it has a resolution that is a diagram of a link isotopic to $L$. See Figure~\ref{fig:0001} for an illustration.

% **************************************************************
\begin{figure}[htbp] 
\def\ta#1{{\Scale[2.5]{#1}}} 
\def\tb#1{{\Scale[4.5]{#1}}} 
\def\te#1{{\Scale[2.0]{#1}}} 
\def\somea{{\Scale[4.8]{\text{\rm (a)}}}} 
\def\someb{{\Scale[4.8]{\text{\rm (b)}}}} 
\def\somec{{\Scale[4.8]{\text{\rm (c)}}}} 
\def\somed{{\Scale[4.8]{\text{\rm (d)}}}} 
\def\somee{{\Scale[4.8]{\text{\rm (e)}}}} 
\def\somef{{\Scale[4.8]{\text{\rm (f)}}}} 
\def\pluschords{{\Scale[4.4]{${\text{\rm plus all its possible chords}}$}}} 
\def\reds{\scalebox{5}{\small\rmfamily resolve}}
\def\asdi{\scalebox{5}{\small\rmfamily differently}}
\centering 
\scalebox{0.2}{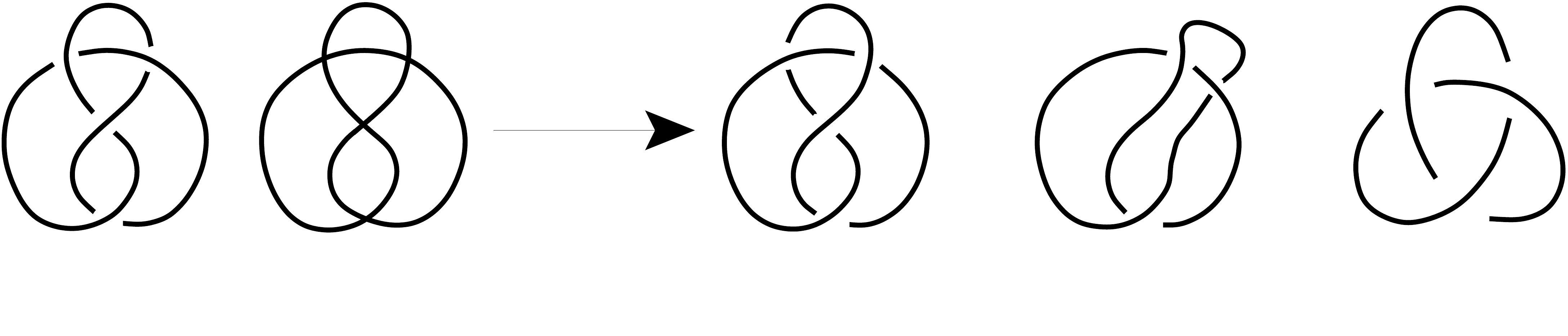}
\caption{The diagram $D$ is the usual diagram of the figure-eight knot $4_1$. As we illustrate, its shadow $S$ can be resolved into (a diagram of) the trefoil knot $3_1$.}
\label{fig:0001} 
\end{figure} 
% **************************************************************

\vglue 0.4 cm
\noindent{\bf Definition. }(Equivalent shadows). {\em Two shadows are {\em equivalent} if there is a (possibly orientation-reversing) self-homeomorphism of $\sphere$ that takes one to the other.}
\vglue 0.4 cm

Clearly two equivalent shadows have identical resolution properties.

A {\em component} of a shadow is a closed straight-ahead walk (this notion is recalled in Section~\ref{sec:gausscodes}). Thus a knot shadow has exactly one component, and in general a shadow of a link with $\ell$ components itself has $\ell$ components. See Figure~\ref{fig:exgaussB} for an example of a $3$-component shadow. This should not be confused with a graph-theoretical {\em connected component}: graph-theoretic connectivity is always referred to by saying that a shadow is {\em connected}.

We emphasize that a shadow is not necessarily a simple graph, as it may have loops and/or parallel edges. We assume that every component of a shadow has at least one vertex. This is a valid assumption for our purposes. Indeed, a vertexless knot shadow resolves only into the unknot, and if a multi-component link shadow $S$ has a vertexless component then $S$ resolves only into split links. In neither case does $S$ resolve into any of the knots or links under consideration (see Theorems~\ref{thm:knots} and~\ref{thm:links} below).

\subsection{Our main result}

Given a shadow $S$ and a link $L$, it is natural to ask whether $S$ resolves into $L$. This question and several variants have been thoroughly investigated in the literature~\cite{cantarella,endoitoh,evenzohar,hanaki2009,hanaki2010,hanaki2014,hanaki2015,hanaki2020,hannak,huhtaniyama,itotakimura,mahato,medina1,millett,smooth,ptaniyama}. The formal algorithmic question is the following:

\vglue 0.4cm 
\noindent{\bf Question. }{\em Let $L$ be a fixed link. Given a shadow $S$, what is the complexity of deciding whether $S$ resolves into $L$?}
\vglue 0.4 cm

We emphasize that this Question is only interesting if $L$ is not trivial. Indeed, every knot shadow can be resolved into the trivial knot (see for instance~\cite[Section 3.1]{adams}), and every shadow with $\ell>1$ components can be resolved into a split link that consists of $\ell$ trivial knots.

For a general fixed link $L$ it is in principle possible to approach this Question by brute force. If $S$ has $n$ vertices there are $2^n$ ways to resolve $S$ (as there are two possible ways to assign the over/under information at each vertex), and so in order to decide whether $S$ resolves into $L$ it suffices to check for each of these $2^n$ resolutions whether the resulting link is isotopic to $L$. 

Recent work of Lackenby shows that (for each fixed link $L$) determining whether a given diagram represents a link isotopic to $L$ lies in NP, and so it can be decided deterministically in exponential time~\cite{lackenby2} (see also~\cite{lackenby}). Therefore the brute-force approach above yields an exponential-time algorithm for our Question.

Our main result is the existence of a linear-time algorithm that decides whether an input shadow $S$ resolves into $L$, if $L$ is one of the knots in Figure~\ref{fig:0002} or one of the links in Figure~\ref{fig:0003}. We assume that the input shadow is given as a signed Gauss code for knots and a signed Gauss paragraph for links (we recall these notions in Section~\ref{sec:gausscodes}). Running time is measured in the number $|S|$ of vertices of $S$.

\begin{theorem}\label{thm:knots} 
For each $K \in \{3_1,4_1,5_1,5_2,6_2\}$ there is a linear-time algorithm that decides whether a knot shadow $S$ resolves into $K$. 
\end{theorem}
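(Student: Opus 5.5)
We plan to prove Theorem~\ref{thm:knots} by giving, for each $K\in\{3_1,4_1,5_1,5_2,6_2\}$, a purely combinatorial characterization of the knot shadows that resolve into $K$. The characterization will be phrased in terms of the Gauss code of $S$, and each condition will be checkable in time $O(|S|)$. The guiding principle is a ``minor-type'' statement. A shadow $S$ resolves into $K$ if and only if $S$ contains (in a suitable sense) one of finitely many small configurations, read off the Gauss code as an interleaving pattern of chords, and $S$ is not ``too small'' in a way that forces unknotting. The natural notion is the following. Call a subset of vertices of $S$ \emph{free} if we can resolve them arbitrarily and then resolve all remaining vertices so that they can be removed by Reidemeister moves (I/II) or by descending/monotone resolutions of arcs. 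Then $S$ resolves into $K$ iff the subshadow induced by some set of chords is equivalent to a shadow resolving into $K$, and the remaining crossings can be made ``trivial'' relative to it.

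\textbf{Step 1 (reduction).} First reduce $S$ by removing nugatory vertices (cut vertices of the $4$-regular graph, i.e.\ isolated chords in the Gauss code) and, where possible, pairs of vertices forming a bigon that can be resolved as a Reidemeister II pair. We show these operations preserve the set of knots into which $S$ resolves, at least for the knots under consideration. We also show they can be carried out in linear total time by scanning the Gauss code with a stack.

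\textbf{Step 2 (sufficient conditions).} For each $K$, exhibit explicit chord patterns. For $3_1$, the pattern is three mutually interleaved chords, or more generally the absence of a ``trivial'' structure (known results say a knot shadow resolves into $3_1$ unless it is a tree-like composition of simple closed curves and $(2,n)$-type pieces). For $4_1,5_1,5_2,6_2$, the patterns are analogous finite lists of interleaving configurations together with a connectivity/reducedness condition. In each case we resolve the chosen chords as in a standard diagram of $K$, and resolve all other crossings via a descending rule along the remaining arcs so that they disappear under isotopy.

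\textbf{Step 3 (necessary conditions).} Show that if none of the patterns occurs, then every resolution has a small invariant incompatible with $K$. The intended argument uses crossing number together with the Jones or Alexander polynomial, via the fact that $S$ decomposes as a connected sum of shadows of bounded complexity. This is the main obstacle, especially for $5_2$ and $6_2$, where many configurations of reduced shadows must be enumerated. We would first try to bound the ``essential'' size of a pattern-free reduced shadow by a constant, and then handle the finitely many cases by finite case analysis.

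\textbf{Step 4 (algorithm).} Finally, detecting each fixed-size pattern in a Gauss code in linear time is not automatic: naive search is polynomial of degree equal to the pattern size. We would exploit the structure from Step 3. After reduction, either $S$ has bounded size, in which case we brute force in constant time, or $S$ is large, in which case a pattern is forced, so the answer is ``yes'' unless $S$ belongs to explicit infinite families (such as $(2,n)$-torus-type shadows and their connected sums). Those families can be recognized in linear time by a single pass over the code.
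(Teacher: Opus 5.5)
Your proposal has a genuine gap at its core. It never states, proves, or cites a characterization of the shadows that resolve into $K$, and that characterization is the real mathematical content of the theorem. The paper does not rederive it; it imports the known results of Taniyama (for $3_1,4_1,5_1,5_2$) and Takimura (for $6_2$). These say that among \emph{prime} knot shadows the only failures are explicit families:
\begin{itemize}
\item none for $3_1$;
\item $T(2,p)$ and shadows with fewer than $4$ vertices for $4_1$;
\item pretzel shadows $S(p_1,p_2,p_3)$ and nutcracker shadows $N(2s,2t)$ for $5_1$;
\item $T(2,p)$ and $N(2,2)$ for $5_2$;
\item all of these together with $S(6_3)$ and $S(8_{18})$ for $6_2$.
\end{itemize}
Your Steps 2--3 instead postulate an unspecified list of chord patterns and a necessity argument that you yourself flag as the main obstacle. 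So the proposal reduces the theorem to a classification you have not carried out.

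Moreover, the concrete steps you do commit to are false.
\begin{itemize}
\item The bigon/Reidemeister~II reduction of Step 1 does not preserve the set of knots into which $S$ resolves. It only shows that knots obtained from the reduced shadow are also obtained from $S$. For example, $T(2,3)$ resolves into $3_1$ but reduces to $T(2,1)$, and $T(2,5)$ resolves into $5_1$ but reduces to $T(2,3)$. In general, the standard minimal diagram of each of the five knots has a bigon whose removal leaves fewer vertices than the crossing number.
\item The plan in Step 3 to bound the size of a ``pattern-free'' shadow by a constant cannot work. The exceptional shadows $T(2,p)$, $S(p_1,p_2,p_3)$, $N(2s,2t)$ are prime and of unbounded size, so non-resolvability must be established for these infinite families directly, not by finite case analysis.
\item Your $3_1$ description is also off. $T(2,p)$ with $p\ge 3$ does resolve into $3_1$. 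Three mutually interleaved chords are not necessary: the interlacement graph of the figure-eight shadow $N(2,2)$ is a $4$-cycle, yet it resolves into $3_1$. The correct statement is simply that every prime knot shadow resolves into $3_1$.
\end{itemize}

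Even granting the characterizations, your Step 4 omits the two algorithmic ingredients actually needed.

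First, since the characterizations are for prime shadows, you must decompose $S$ along \emph{all} $2$-edge cuts. Removing nugatory vertices only splits off one-vertex summands; for instance, $T(2,3)\#T(2,3)$ has no nugatory vertex but is not prime. The paper does the decomposition in linear time via the cactus representation of the $2$-edge cuts. It obtains each piece's Gauss code by restricting the input code to that piece's vertices. It then uses Schubert's uniqueness theorem to conclude that $S$ resolves into the prime knot $K$ iff some prime summand does. Your ``single pass'' recognition of connected sums of family members is not justified.

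Second, each family must be recognized up to congruence in linear time. The paper derives normal forms:
\begin{itemize}
\item $A\,\Neg{A}$ for torus shadows;
\item $A\,B\,C\,\Neg{\rev{A}}\,\Neg{\rev{B}}\,\Neg{\rev{C}}$ for pretzel shadows;
\item $A\,B\,\Neg{\rev{A}}\,\Neg{\rev{B}}$ for nutcracker shadows.
\end{itemize}
It then observes that pretzel and nutcracker codes have exactly six, respectively four, ``breaks'' (gaps across which the partner position does not decrease by one). Hence only a constant number of cyclic translations need checking. The sporadic shadows $N(2,2)$, $S(6_3)$, $S(8_{18})$ are checked in constant time.
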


% **************************************************************
\begin{figure}[htbp] 
\def\ta#1{{\Scale[2.0]{#1}}} 
\def\tb#1{{\Scale[6.5]{#1}}} 
\def\somea{{\Scale[4.8]{\text{\rm (a)}}}} 
\def\someb{{\Scale[4.8]{\text{\rm (b)}}}} 
\def\somec{{\Scale[4.8]{\text{\rm (c)}}}} 
\def\somed{{\Scale[4.8]{\text{\rm (d)}}}} 
\def\somee{{\Scale[4.8]{\text{\rm (e)}}}} 
\def\somef{{\Scale[4.8]{\text{\rm (f)}}}} 
\def\pluschords{{\Scale[4.4]{${\text{\rm plus all its possible chords}}$}}} 
\centering 
\scalebox{0.25}{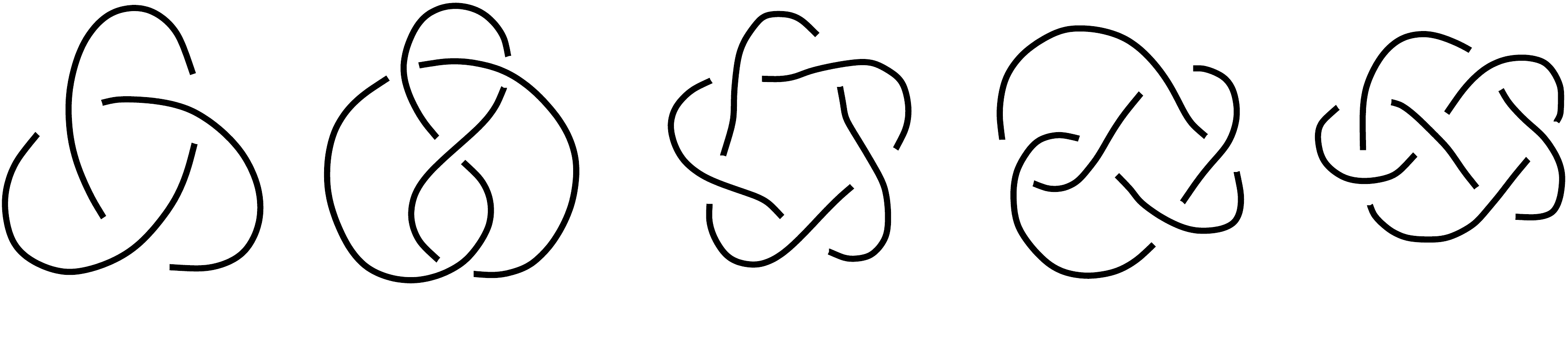}
\caption{The five knots in Theorem~\ref{thm:knots}.}
\label{fig:0002} 
\end{figure} 
% **************************************************************

\begin{theorem}\label{thm:links} 
For each $L \in \{L2a1,L4a1,L5a1,L6n1\}$ there is a linear-time algorithm that decides whether a link shadow $S$ resolves into $L$.
\end{theorem}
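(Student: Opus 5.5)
The strategy is to reduce Theorem~\ref{thm:links} to a purely combinatorial characterization. For each $L\in\{L2a1,L4a1,L5a1,L6n1\}$ we find a finite list of small ``forbidden'' or ``required'' subconfigurations such that a link shadow $S$ resolves into $L$ if and only if $S$ has the right number of components and contains one of these configurations. We then check that the presence of such a configuration can be detected in linear time from the signed Gauss paragraph. The first step is the trivial filter. $L2a1$ (Hopf link) and $L4a1$ have two components, $L5a1$ (Whitehead) has two, and $L6n1$ has three. So we first read off the number of components of $S$ in linear time and reject if it is wrong. Since linking numbers are invariants, we also record, for each pair of components $C_i,C_j$, the number $m_{ij}$ of vertices where $C_i$ crosses $C_j$ (an even number). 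Any resolution has $|\mathrm{lk}(C_i,C_j)|\le m_{ij}/2$, with all values of the correct parity achievable independently of the self-crossings.

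The second step is the characterization for each link. For $L2a1$ we expect: $S$ resolves into the Hopf link iff $S$ has two components and $m_{12}\ge 2$. We would prove this by unknotting each component via a descending (monotone) resolution of its self-crossings, placing one component entirely above the other except at two chosen inter-component crossings. This makes the link a two-component link whose linking number is $\pm1$ and which is a ``layered'' link. The expected key lemma is that a layered two-component link with each component descending is determined by the inter-component crossings, so it is the Hopf link. The characterizations for the other links should be of the same flavour. For $L4a1$ ($\mathrm{lk}=\pm2$) we would use $m_{12}\ge4$ together with a planarity-type condition on how $C_1$ and $C_2$ interleave. For $L6n1$ we would use pairwise conditions $m_{ij}\ge2$ for the chain $(2,2,2)$ torus link, plus possibly a cyclic ordering condition. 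For $L5a1$ ($\mathrm{lk}=0$) we expect a condition such as ``some component has a self-crossing separating the other component's crossings'' or $m_{12}\ge4$ together with a self-crossing. Necessity would use the number of vertices and invariants (linking numbers, and e.g.\ the Sato--Levine/Conway coefficient for Whitehead) to show small configurations fail. Sufficiency would use explicit descending-type resolutions, in which all crossings outside a chosen sub-shadow are made ``monotone'' so that they can be removed by Reidemeister moves. This reduces to a diagram of bounded size equal to the desired link.

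The third step is algorithmic. Each characterizing condition is either a count (computable in one pass over the Gauss paragraph) or the existence of a bounded-size pattern in the Gauss words, such as an interleaving $a\,b\,a\,b$ of two symbols of a specified type. Interleaving patterns of bounded length can be detected in linear time by scanning with stacks, as in detecting whether chords of a chord diagram cross. Checking whether some chord crosses another chord is a parenthesis-matching test. Combining these gives a linear-time decision procedure.

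The main obstacle will be the sufficiency direction for $L5a1$ and $L4a1$. Here we must show that whenever the combinatorial pattern is present, the remaining crossings can genuinely be resolved so that they disappear, without changing the isotopy class produced by the chosen pattern. I would first try a layering argument. Order the components, and along each component use a descending resolution starting just before the pattern, so that everything outside the pattern lies on strictly separated heights. The risk is that the pattern's own crossings are interleaved with others along the strands. In that case I would iteratively take a minimal pattern, meaning an innermost one in the Gauss word, and argue by induction on $|S|$ via removal of monogons/bigons.
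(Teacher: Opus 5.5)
Your proposal has a genuine gap. The whole content of the theorem is knowing exactly which shadows resolve into each $L$, and you only conjecture these characterizations (``we expect'', ``possibly''); several of your guesses are wrong. The paper does not rederive them; it imports them. By Taniyama:
\begin{itemize}
\item a two-component shadow resolves into $L2a1$ iff it is connected;
\item it resolves into $L4a1$ iff its components share at least $4$ vertices;
\item a \emph{prime} two-component shadow resolves into $L5a1$ iff it is not equivalent to $T(2,p)$ for even $p$.
\end{itemize}
By~\cite{ars}, a three-component shadow resolves into $L6n1$ iff it is pairwise crossing.

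Measured against these, your candidate criteria fare as follows.
\begin{itemize}
\item Your $L2a1$ criterion agrees with the correct one, since $m_{12}$ is even.
\item For $L4a1$ and $L6n1$, no extra ``planarity-type'' or ``cyclic ordering'' condition is needed. Any genuinely restrictive extra condition would reject shadows that do resolve.
\item Your $L5a1$ candidate ``$m_{12}\ge 4$ together with a self-crossing'' is false. Add a kink to one component of $T(2,4)$. Every resolution reduces by a Reidemeister~I move to a diagram with at most $4$ crossings, so it is never the Whitehead link, which has crossing number $5$.
\item Your Hopf sketch fails as written. The fully layered resolution has linking number $0$, and switching \emph{two} inter-component crossings changes it by an even amount, so $\mathrm{lk}\in\{0,\pm 2\}$, never $\pm 1$. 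You must switch exactly one crossing $c$. You then still need to argue that the result is the Hopf link, e.g.\ by letting the lower component descend from just before $c$, so that it shrinks to a meridian of the other, unknotted, component.
\end{itemize}

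The idea you are missing for $L5a1$ is that the correct criterion is global, not a bounded local pattern. Knot-shadow summands spliced into the components along $2$-edge cuts are irrelevant, because a resolution into a prime link forces them to resolve into the unknot. They must therefore be stripped off first. The paper does this in linear time:
\begin{itemize}
\item It computes the cactus representation of the $2$-edge cuts of the underlying graph.
\item It keeps the unique prime piece $S'$ that still has two components; its Gauss paragraph is just the restriction of the input words.
\item It tests whether $S'$ is a torus shadow, i.e.\ whether $\Gamma_1$ is alternating and $\Gamma_2$ is a cyclic shift of $\Neg{\Gamma_1}$ or of $\Neg{\rev{\Gamma_1}}$.
\end{itemize}
With the correct characterizations, everything else is counting components, checking connectivity, and counting shared vertices between pairs of words. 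Your stack-based detection of bounded interleaving patterns is therefore never needed, and its claimed linear running time was not justified in any case.
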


% **************************************************************
\begin{figure}[htbp] 
\def\ta#1{{\Scale[2.0]{#1}}} 
\def\tb#1{{\Scale[6.5]{#1}}} 
\def\somea{{\Scale[4.8]{\text{\rm (a)}}}} 
\def\someb{{\Scale[4.8]{\text{\rm (b)}}}} 
\def\somec{{\Scale[4.8]{\text{\rm (c)}}}} 
\def\somed{{\Scale[4.8]{\text{\rm (d)}}}} 
\def\somee{{\Scale[4.8]{\text{\rm (e)}}}} 
\def\somef{{\Scale[4.8]{\text{\rm (f)}}}} 
\def\pluschords{{\Scale[4.4]{${\text{\rm plus all its possible chords}}$}}} 
\centering 
\scalebox{0.25}{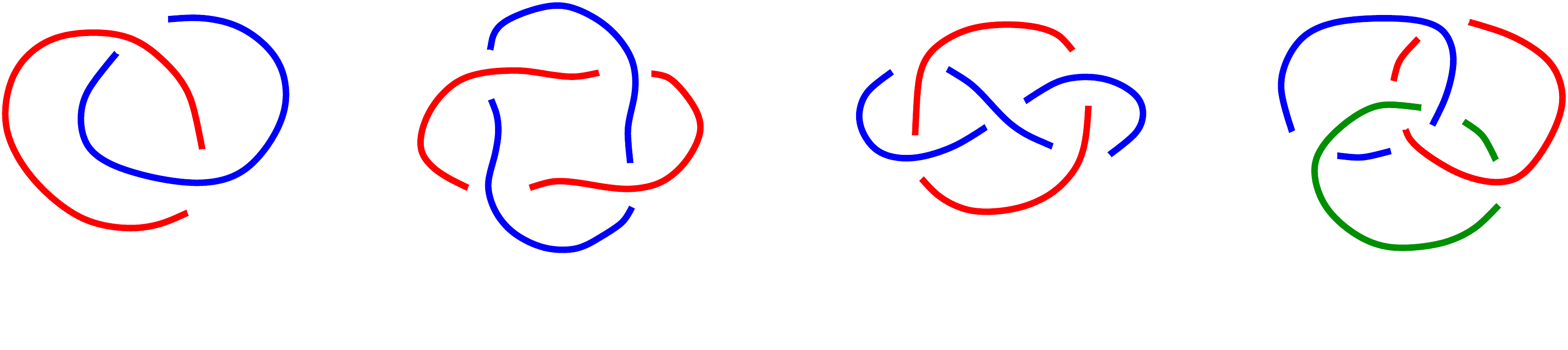}
\caption{The four links  in Theorem~\ref{thm:links}.}
\label{fig:0003} 
\end{figure} 
% **************************************************************

\subsection{Strategy and related work}\label{sub:stra}

The workhorse behind the proofs of Theorems~\ref{thm:knots} and~\ref{thm:links} is a characterization of the shadows that resolve into a prescribed link. For the knots in Theorem~\ref{thm:knots} the relevant characterizations are stated for {\em prime} knot shadows (a notion we recall in Section~\ref{sec:primeshadows}), so we first need to decompose an arbitrary knot shadow into prime shadows. For $L5a1$ we similarly need to reduce any given shadow to a prime $2$-component link shadow.

Taniyama found the characterizations that we use for the knots $3_1, 4_1, 5_1, 5_2$ and for the two-component links $L2a1, L4a1, L5a1$, as by-products of his work on a partial order on knots~\cite{taniyamaknots} and links~\cite{taniyamalinks}. More recently Takimura~\cite{takimura} found a characterization for the knot $6_2$. Finally,~\cite{ars} gives a characterization for the three-component link $L6n1$, the only non-alternating three-component link with six crossings.

After the appropriate prime decomposition, when one is needed, the knot cases and the $L5a1$ case reduce to recognizing a small number of exceptional shadow families (the other multi-component link cases reduce to simpler tests). As we shall see, all the required tests can be performed in linear time.

\section{Gauss codes and paragraphs}\label{sec:gausscodes}

As we mentioned above, we assume that the input shadow in Theorem~\ref{thm:knots} is given by a signed Gauss code, and the input shadow in Theorem~\ref{thm:links} is given by a signed Gauss paragraph. Our aim in this section is to recall these standard knot-theoretical notions. We assume that an $n$-vertex input shadow has vertices labelled $1,\ldots,n$.

\subsection{Signed Gauss codes of knot shadows}\label{sub:gaussknot}

Let $S$ be a knot shadow with $n$ vertices. We choose any non-vertex point $p$ of $S$ as a starting basepoint, choose a traversal direction, and traverse the shadow in a straight-ahead manner until we reach $p$ again. Here ``straight-ahead'' means that whenever we reach a vertex we continue the traversal following the edge that is opposite to the edge from which we reached the vertex. See Figure~\ref{fig:exgaussA} for an illustration.

% **************************************************************
\begin{figure}[htbp] 
\def\ta#1{{\Scale[1.6]{#1}}} 
\def\tb#1{{\Scale[6.5]{#1}}} 
\def\somea{{\Scale[4.8]{\text{\rm (a)}}}} 
\def\someb{{\Scale[4.8]{\text{\rm (b)}}}} 
\def\somec{{\Scale[4.8]{\text{\rm (c)}}}} 
\def\somed{{\Scale[4.8]{\text{\rm (d)}}}} 
\def\somee{{\Scale[4.8]{\text{\rm (e)}}}} 
\def\somef{{\Scale[4.8]{\text{\rm (f)}}}} 
\def\pluschords{{\Scale[4.4]{${\text{\rm plus all its possible chords}}$}}} 
\centering 
\scalebox{0.25}{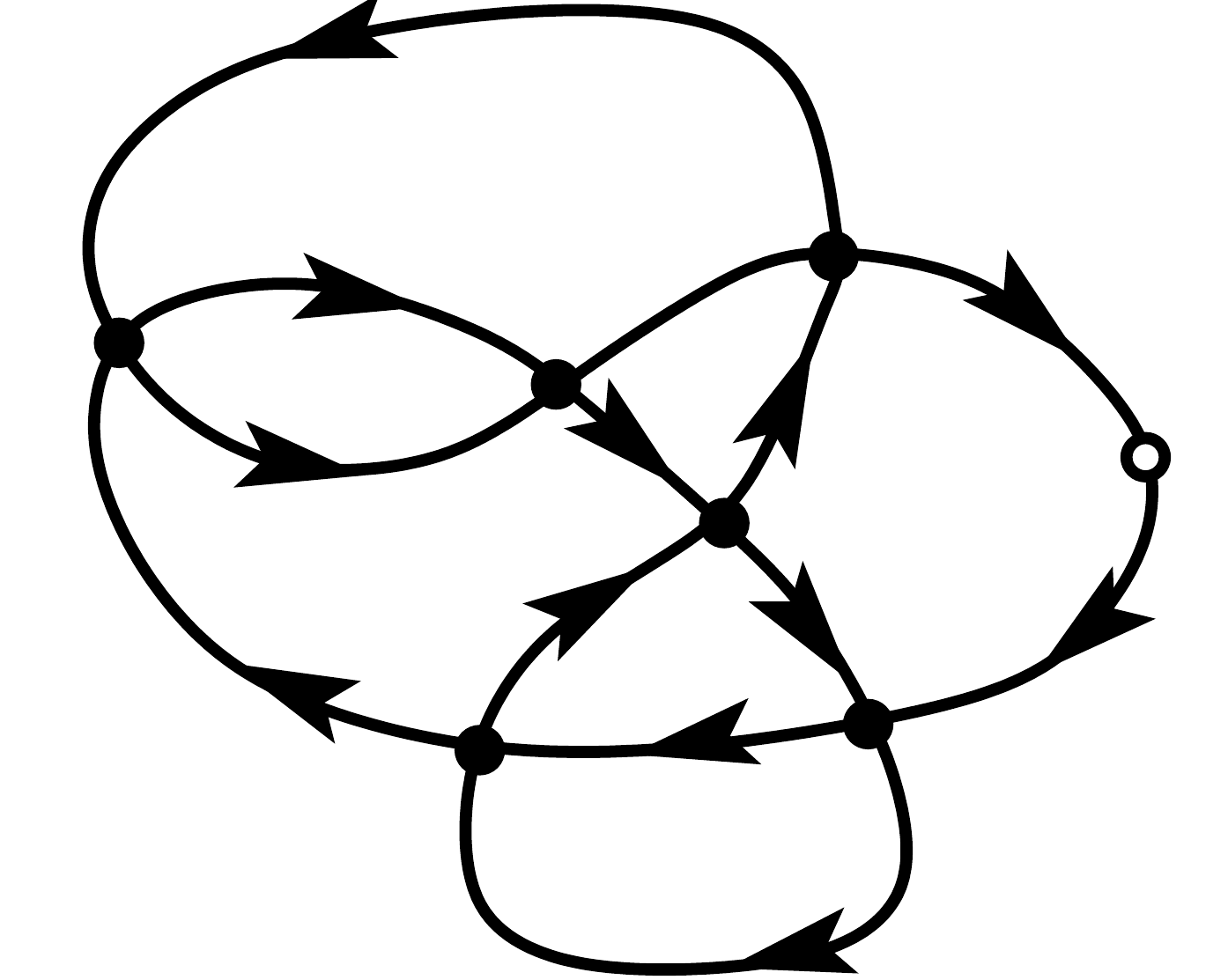}
\caption{A signed Gauss code of this knot shadow is $6^+\,5^-\,3^-\,4^+\,2^+\,6^-\,5^+\,2^-$ $1^-\,3^+\,4^-\,1^+$.}
\label{fig:exgaussA} 
\end{figure} 
% **************************************************************

We record the vertices in the order in which we encounter them, assigning a sign to each occurrence with the convention illustrated in Figure~\ref{fig:gauss1}: an occurrence of $v$ is recorded as $v^+$ when we reach $v$ via its {\em positive} incoming edge, and as $v^-$ if we reach it via its negative incoming edge. The word $\Gamma$ recorded in this way is a {\em signed Gauss code} of $S$. It is not unique: it depends on the choice of the basepoint $p$, of the traversal direction, and of the orientation of $\sphere$.

% **************************************************************
\begin{figure}[htbp] 
\def\ta#1{{\Scale[1.6]{#1}}} 
\def\tb#1{{\Scale[6.5]{#1}}} 
\def\somea{{\Scale[4.8]{\text{\rm (a)}}}} 
\def\someb{{\Scale[4.8]{\text{\rm (b)}}}} 
\def\somec{{\Scale[4.8]{\text{\rm (c)}}}} 
\def\somed{{\Scale[4.8]{\text{\rm (d)}}}} 
\def\somee{{\Scale[4.8]{\text{\rm (e)}}}} 
\def\somef{{\Scale[4.8]{\text{\rm (f)}}}} 
\def\posi{\scalebox{4}{\small\rmfamily positive incoming edge}}
\def\nega{\scalebox{4}{\small\rmfamily negative incoming edge}}
\centering 
\scalebox{0.25}{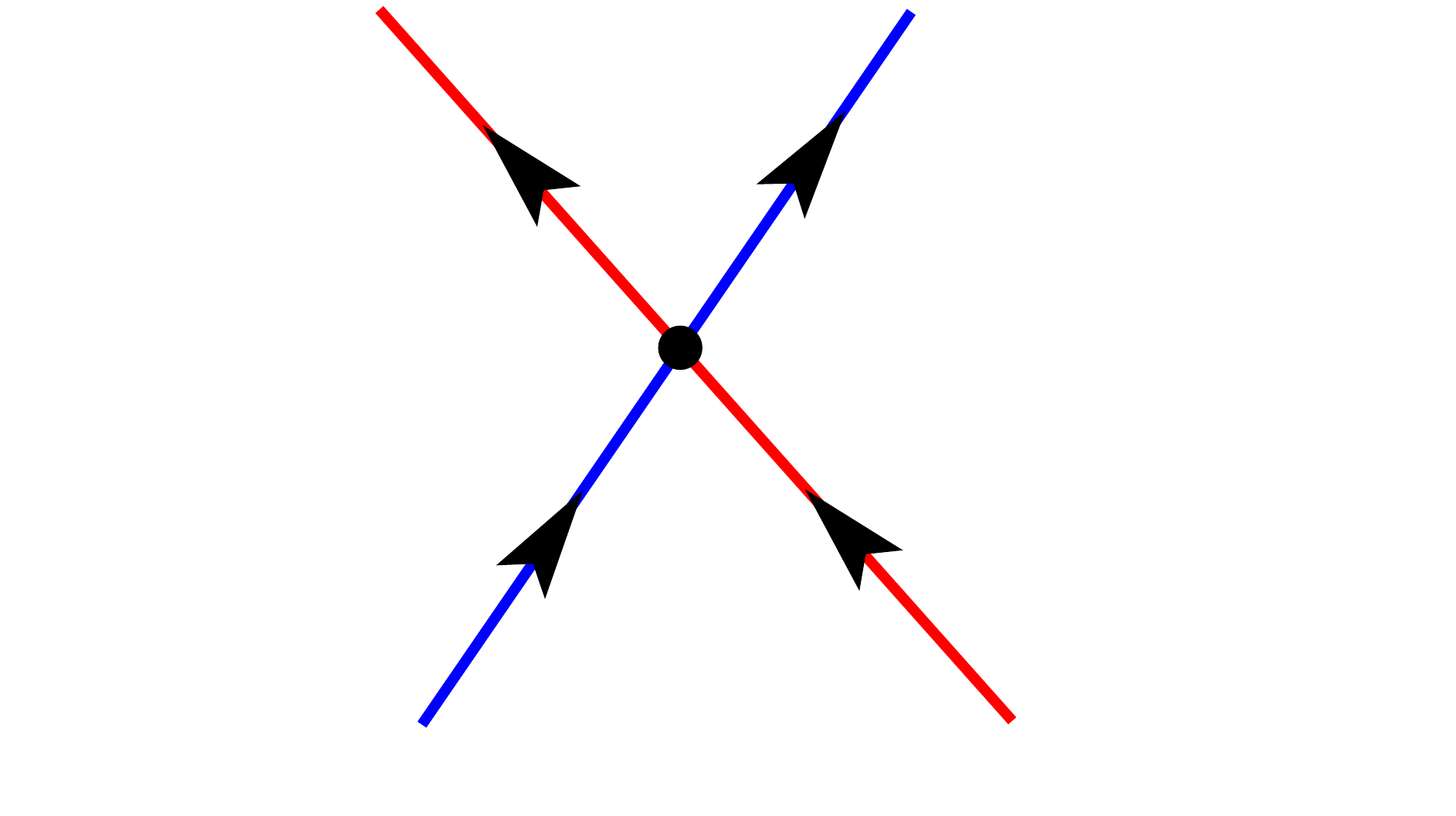}
\caption{The sign convention for the incoming edges at a vertex $v$.}
\label{fig:gauss1} 
\end{figure} 
% **************************************************************

Thus a Gauss code has $2n$ symbols, and each vertex occurs once with each sign.

For a Gauss code $\Gamma=a_1^{\varepsilon_1}\cdots a_{2n}^{\varepsilon_{2n}}$ (where $\varepsilon_i \in \{+,-\}$ for $i=1,\ldots,2n$), changing the basepoint gives a {\em cyclic translation} of $\Gamma$, reversing the traversal gives $\rev{\Gamma}:=a_{2n}^{\varepsilon_{2n}}\cdots a_1^{\varepsilon_1}$, and reversing the orientation of $\sphere$ gives $\Neg{\Gamma}:=a_1^{-\varepsilon_1}\cdots a_{2n}^{-\varepsilon_{2n}}$.

\vglue 0.4 cm
\noindent{\bf Definition. }(Congruent signed Gauss codes). {\em Two signed Gauss codes $\Gamma$ and $\Gamma'$ are {\em congruent} if $\Gamma'$ can be obtained from $\Gamma$ by some sequence of cyclic translation, reversal, and negation operations, possibly also using a relabelling of the vertices.}
\vglue 0.4 cm

The following standard fact characterizes equivalent shadows; see Carter~\cite{carter1991} or Erickson~\cite{erickson}.

\begin{proposition}\label{pro:carter}
Let $S$ and $S'$ be knot shadows, and let $\Gamma$ and $\Gamma'$ be signed Gauss codes of $S$ and $S'$, respectively. Then $S$ and $S'$ are equivalent if and only if $\Gamma$ and $\Gamma'$ are congruent.
\end{proposition}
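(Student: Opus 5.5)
We prove the two implications separately. The forward direction is direct. The reverse direction rebuilds the shadow, up to homeomorphism of $\sphere$, from the combinatorial data carried by a signed Gauss code.

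For the forward direction, let $h\colon\sphere\to\sphere$ be a homeomorphism with $h(S)=S'$. Since $h$ maps vertices to vertices and edges to edges, and preserves the cyclic order of the four edges at each vertex (up to a global reversal), it takes straight-ahead walks to straight-ahead walks. Choose a basepoint $p$ and a direction on $S$ that produce $\Gamma$. Then $h(p)$, with the image direction, produces on $S'$ a code that equals $\Gamma$ after relabelling each vertex $v$ as $h(v)$, and after negating all signs if $h$ reverses orientation. To see the sign claim, note that the positive/negative incoming edges of Figure~\ref{fig:gauss1} are defined using the orientation of $\sphere$ at $v$. An orientation-preserving $h$ therefore preserves these labels, and an orientation-reversing $h$ swaps them at every vertex simultaneously. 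Finally, any code of $S'$ is obtained from this particular one by a cyclic translation (change of basepoint) and possibly a reversal (change of direction). Hence $\Gamma'$ is congruent to $\Gamma$.

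For the reverse direction, first observe that each operation used in congruence corresponds to a choice made for the same shadow. Cyclic translation corresponds to moving the basepoint, reversal to reversing the traversal direction, negation to an orientation-reversing homeomorphism, and relabelling to renaming vertices. We may therefore assume that $\Gamma=\Gamma'$ literally. The task is then to show that a signed Gauss code determines the shadow up to orientation-preserving homeomorphism. We recover the embedded $4$-regular graph as follows:
\begin{itemize}
\item[(i)] The vertices are the labels $1,\ldots,n$.
\item[(ii)] The edges are the consecutive pairs in the cyclic word, $2n$ of them in total.
\item[(iii)] At each vertex $v$, the rotation system (the cyclic order of the four half-edges) is determined by the two occurrences $v^+$ and $v^-$. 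The edge entering at $v^+$ and the edge leaving after $v^+$ are opposite each other, and likewise for $v^-$. The sign says whether the second strand crosses the first from left to right or from right to left. This fixes the cyclic order of the four half-edges at $v$, using the conventions of Figure~\ref{fig:gauss1}.
\end{itemize}
We thus obtain a combinatorial map, that is, a graph with a rotation system.

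The key step, and the main obstacle, is the classical fact that a connected graph embedding in $\sphere$ is determined up to orientation-preserving homeomorphism by its rotation system. To prove it, use the rotation system to compute the face boundary walks. Build a $2$-complex by gluing one disk along each face walk. Since $S$ and $S'$ are both cellular embeddings in $\sphere$ with the same rotation system, each of $\sphere\setminus S$ and $\sphere\setminus S'$ decomposes into open disks whose boundaries follow the same walks. Connectedness of a knot shadow ensures that every face is an open disk. A homeomorphism $S\to S'$ respecting the rotations then extends across each face by the Alexander trick (coning), which is compatible with the PL category. This gives a homeomorphism $\sphere\to\sphere$ taking $S$ to $S'$, so $S$ and $S'$ are equivalent.

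The delicate point to check is step (iii): the local picture must really be forced by the sign convention, so that no further choice remains at any vertex. I would verify this carefully against Figure~\ref{fig:gauss1}, and otherwise cite Carter~\cite{carter1991} for this standard fact.
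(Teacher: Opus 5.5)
The paper gives no proof of this proposition; it only cites Carter and Erickson. Your argument is correct. It is the standard self-contained route: the signed code records exactly the rotation system of the embedded $4$-regular graph, and the Heffter--Edmonds--Ringel rotation principle then finishes. That principle says a cellular embedding of a connected graph in an oriented surface is determined, up to orientation-preserving homeomorphism, by its rotation system.

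The point you flag in step (iii) does work. At each vertex the two incoming half-edges are never opposite, so they are adjacent in the cyclic order. Hence only two local pictures are possible, and the sign selects one of them. This is consistent because exactly one of the two passes through a crossing sees the other pass arrive from its right.

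Compared with the citation, your version explains why the sign convention of Figure~\ref{fig:gauss1} carries exactly the needed local data. It also shows why reversal keeps the signs while negation corresponds to changing the orientation of $\sphere$. The citation buys brevity.

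Two details should be tidied.

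\emph{Forward direction.} A Gauss code also depends on the chosen orientation of $\sphere$. So an arbitrary code of $S'$ is obtained from the one you construct by cyclic translation, reversal, \emph{and possibly negation}. This is harmless, since negation is a congruence operation.

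\emph{Extension step.} The closure of a face need not be a closed disk. The outer face of $T(2,1)$, and more generally any face at a nugatory crossing, visits a vertex twice along its boundary walk. So the coning must be done on a disk attached along the boundary walk, not on the face closure. A clean way to do this:
\begin{enumerate}
\item[(a)] The rotation system determines a regular neighbourhood of the shadow (vertex disks joined by bands) up to orientation-preserving homeomorphism.
\item[(b)] By the Schoenflies theorem, each boundary circle of this neighbourhood bounds a complementary closed disk in $\sphere$.
\item[(c)] Apply the Alexander trick to these disks.
\end{enumerate}
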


Suppose now that $S$ is a shadow of a link with $\ell > 1$ components, and let $W_1,\ldots,W_\ell$ be the components of $S$. For each $W_i$, choose a basepoint and direction and record as above the resulting signed Gauss word $\Gamma_i$, which we call a {\em signed Gauss word of $W_i$}. The unordered collection $\mathbf{\Gamma}=\{\Gamma_1,\ldots,\Gamma_\ell\}$ is a {\em signed Gauss paragraph} of $S$. As in signed Gauss codes, each vertex occurs exactly twice in the paragraph, once with each sign. See Figure~\ref{fig:exgaussB} and its caption for an example.

% **************************************************************
\begin{figure}[htbp] 
\def\ta#1{{\Scale[2.0]{#1}}} 
\def\tb#1{{\Scale[6.5]{#1}}} 
\def\somea{{\Scale[4.8]{\text{\rm (a)}}}} 
\def\someb{{\Scale[4.8]{\text{\rm (b)}}}} 
\def\somec{{\Scale[4.8]{\text{\rm (c)}}}} 
\def\somed{{\Scale[4.8]{\text{\rm (d)}}}} 
\def\somee{{\Scale[4.8]{\text{\rm (e)}}}} 
\def\somef{{\Scale[4.8]{\text{\rm (f)}}}} 
\def\pluschords{{\Scale[4.4]{${\text{\rm plus all its possible chords}}$}}} 
\centering 
\scalebox{0.25}{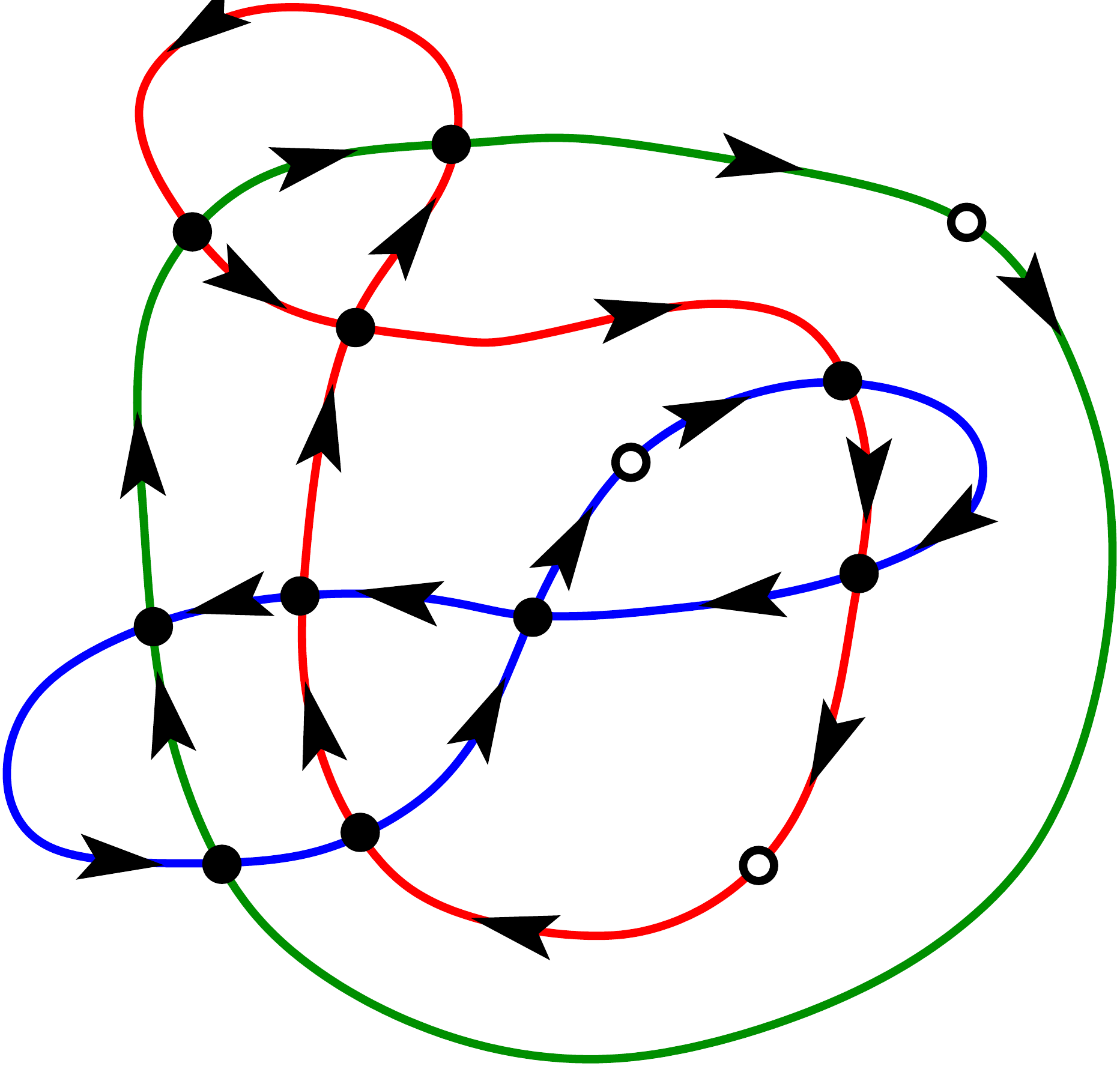}
\caption{A link shadow with three components, coloured green, blue, and red. If we traverse the green component starting at $p_1$ in the given direction, the blue component starting at $p_2$ in the given direction, and the red component starting at $p_3$ in the given direction, we obtain the signed Gauss paragraph $\{9^-\,6^+\,1^-\,2^+,\,\,3^-\,4^+\,$ $7^-\,8^-\,6^-\,9^+\,10^+\,7^+,$ $  \, 10^-\,8^+\,5^-\,2^-\,1^+\,5^+\,3^+\,4^-\}$.}
\label{fig:exgaussB}
\end{figure} 
% **************************************************************

We define congruence for Gauss paragraphs similarly to congruence for Gauss codes. Reversing the traversal of a component $W_i$ reverses $\Gamma_i$, and at every vertex shared with another component both occurrences change sign.

To capture this, a {\em single reversal} at $\Gamma_i$ consists of reversing the order of $\Gamma_i$ and, for every vertex that occurs exactly once in $\Gamma_i$, changing the signs of both occurrences of that vertex in the whole Gauss paragraph.

We thus obtain the following notion of congruence for signed Gauss paragraphs.

\vglue 0.4 cm 
\noindent{\bf Definition. }(Congruent signed Gauss paragraphs). {\em Two signed Gauss paragraphs $\BG$ and $\BG'$ are {\em congruent} if $\BG'$ can be obtained from $\BG$ by some sequence of (i) cyclic translations of signed Gauss words; (ii) single reversals; and (iii) the negation of all the signed Gauss words, possibly also using a relabelling of the vertices.}
\vglue 0.4 cm

As with Proposition~\ref{pro:carter}, for a proof of the next statement we refer the reader to Carter~\cite{carter1991} or Erickson~\cite{erickson}.

\begin{proposition}\label{pro:carterlinks}
Let $S$ and $S'$ be connected link shadows with the same number of components, and let $\BG$ and $\BG'$ be signed Gauss paragraphs of $S$ and $S'$, respectively. Then $S$ and $S'$ are equivalent if and only if $\BG$ and $\BG'$ are congruent.
\end{proposition}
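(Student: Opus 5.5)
The plan is to reduce the statement to the knot case, Proposition~\ref{pro:carter}, together with the classical fact that an embedded connected $4$-regular graph in $\sphere$ is determined up to homeomorphism by its rotation system. The first step is to observe that a signed Gauss paragraph $\BG$ of a connected shadow $S$ determines the rotation system of $S$ up to global reversal. The words $\Gamma_i$ list, for each component, the cyclic sequence of vertices and the edges between consecutive occurrences. So the abstract $4$-regular graph, together with its partition of the four half-edges at each vertex into two opposite pairs, is recovered from $\BG$. The sign of an occurrence $v^{\pm}$ says whether the incoming edge is the positive or negative one. With the convention of Figure~\ref{fig:gauss1}, this fixes the cyclic order of the four half-edges at $v$ relative to a chosen orientation of $\sphere$. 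Connectedness of $S$ then makes the rotation system, and hence the cellular embedding in $\sphere$, determined up to homeomorphism. This gives ``congruent $\Rightarrow$ equivalent'' once I check that each congruence operation preserves the reconstructed embedding up to homeomorphism:
\begin{itemize}
\item cyclic translations change only basepoints;
\item relabelling is harmless;
\item global negation corresponds to reversing the orientation of $\sphere$;
\item a single reversal at $\Gamma_i$ corresponds to reversing the direction of traversal of $W_i$.
\end{itemize}

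For the single reversal I will verify the sign rule carefully. At a vertex where $W_i$ crosses itself, reversing $W_i$ reverses both strands. This swaps which incoming edge is positive, but also swaps which occurrence comes first, so after reversing the order of $\Gamma_i$ the two signs are simply carried along. At a vertex where $W_i$ meets $W_j$ with $j\neq i$, only one strand is reversed, and the local picture shows that both occurrences flip sign. This is exactly the definition of a single reversal.

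For ``equivalent $\Rightarrow$ congruent'', let $h\colon\sphere\to\sphere$ carry $S$ to $S'$. Then $h$ maps components to components, so after relabelling the vertices and reindexing the components I may assume $h$ takes $W_i$ to $W'_i$. I transport the basepoints and directions used for $\BG$ through $h$ and read off a paragraph $\BG''$ of $S'$. This paragraph equals $\BG$, up to global negation if $h$ reverses orientation. It differs from $\BG'$ only in the choices of basepoints and directions for each component. Changing basepoints gives cyclic translations, and changing directions gives a sequence of single reversals, by the verification above. Hence $\BG$ and $\BG'$ are congruent.

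The main obstacle is the rigidity step: that the signed paragraph pins down the rotation at every vertex consistently, and that connectedness (equivalently, the embedding being cellular on $\sphere$) forces uniqueness of the embedding up to homeomorphism. Connectedness is necessary here, since otherwise the relative placement of the connected pieces in faces is not recorded. I would handle this by building the face boundaries directly from the rotation system, using the standard face-tracing procedure. I would then invoke the fact that two cellular embeddings with the same rotation system, up to reversal, are homeomorphic. For this I would cite the same sources as Proposition~\ref{pro:carter}.
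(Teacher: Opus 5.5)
Your argument is correct, and it takes a different route from the paper only in the sense that the paper gives no argument: it defers to Carter and Erickson, exactly as for the knot version. What you supply is the standard self-contained proof behind those references.

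\begin{enumerate}
\item[(a)] A signed paragraph of a connected shadow determines the underlying $4$-regular graph and its opposite-pair structure. Through the sign convention it also determines the rotation at every vertex, relative to an orientation of $\sphere$.
\item[(b)] Every embedding of a connected graph in $\sphere$ is cellular, so the rotation principle for cellular embeddings gives rigidity.
\end{enumerate}

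Relative to the knot case, the substantive new content is your local check of reversing one component. At self-crossings the signs are carried along; at crossings with other components both signs flip. This is precisely what justifies the paper's definition of a single reversal, which the paper states without verification. Your proof therefore buys an explanation of why congruence of paragraphs has the form it does, and of why connectedness is needed.

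Three small presentation points.
\begin{enumerate}
\item[(i)] Despite your opening sentence, the argument never uses Proposition~\ref{pro:carter}, so it is a direct proof rather than a reduction to the knot case.
\item[(ii)] The rigidity step is the Heffter--Edmonds--Ringel rotation principle from topological graph theory. It is more naturally cited from a graph-embedding text than from Carter or Erickson.
\item[(iii)] The cleanest phrasing of ``congruent $\Rightarrow$ equivalent'' goes in two steps. Each operation turns a paragraph of $S$ into a paragraph of $S$ or of a mirror image of $S$, so $\BG'$ is a paragraph of some shadow equivalent to $S$. Then apply rigidity to two connected shadows with literally the same paragraph.
\end{enumerate}
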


\vglue 0.4 cm
\noindent{\bf Remark. }(Omitting ``signed'' from Gauss codes and paragraphs). {\em For brevity, whenever we mention a Gauss code (respectively, a Gauss paragraph), we implicitly mean a {\em signed} Gauss code (respectively, {\em signed} Gauss paragraph).}
\vglue 0.4 cm

\section{Prime shadows and prime decomposition}\label{sec:primeshadows}

As we shall see, the characterizations that establish when a shadow resolves into a given knot are stated for {\em prime} knot shadows. Also, one of the characterizations for links (namely the one that establishes when a link shadow resolves into $L5a1$) is stated for {\em prime} link shadows. 

The primeness of a shadow is in turn defined in terms of the connected sum of shadows. These two notions (primeness and the connected sum of shadows) are natural analogues of prime links and of the connected sum of links. We recall that a link $L$ is {\em prime} if whenever $L$ is a connected sum $L_1\# L_2$ one of $L_1$ and $L_2$ is the trivial knot. 

Let $S_1$ and $S_2$ be link (perhaps knot) shadows. Let $e_1$ and $e_2$ be edges in $S_1$ and $S_2$, respectively. As we illustrate in Figure~\ref{fig:consum}, we cut open $S_1$ and $S_2$ at $e_1$ and $e_2$, respectively, and reconnect the four resulting loose ends without introducing any crossings. We use $S_1\# S_2$ to denote the resulting shadow, and say that it is a {\em connected sum} of $S_1$ and $S_2$. We write $S_1\#\cdots\# S_k$ for any shadow obtained by iterating this operation.

% **************************************************************
\begin{figure}[htbp] 
\def\ta#1{{\Scale[3.0]{#1}}} 
\def\tb#1{{\Scale[3.5]{#1}}} 
\def\eone{{\ta{e_{{}_{1}}}}}
\def\etwo{{\ta{e_{{}_{2}}}}}
\def\eseuno{{\tb{S_{{}_{1}}}}}
\def\esedos{{\tb{S_{{}_{2}}}}}
\def\esesola{{\tb{S}}}
\def\pluschords{{\Scale[4.4]{${\text{\rm plus all its possible chords}}$}}} 
\centering 
\scalebox{0.15}{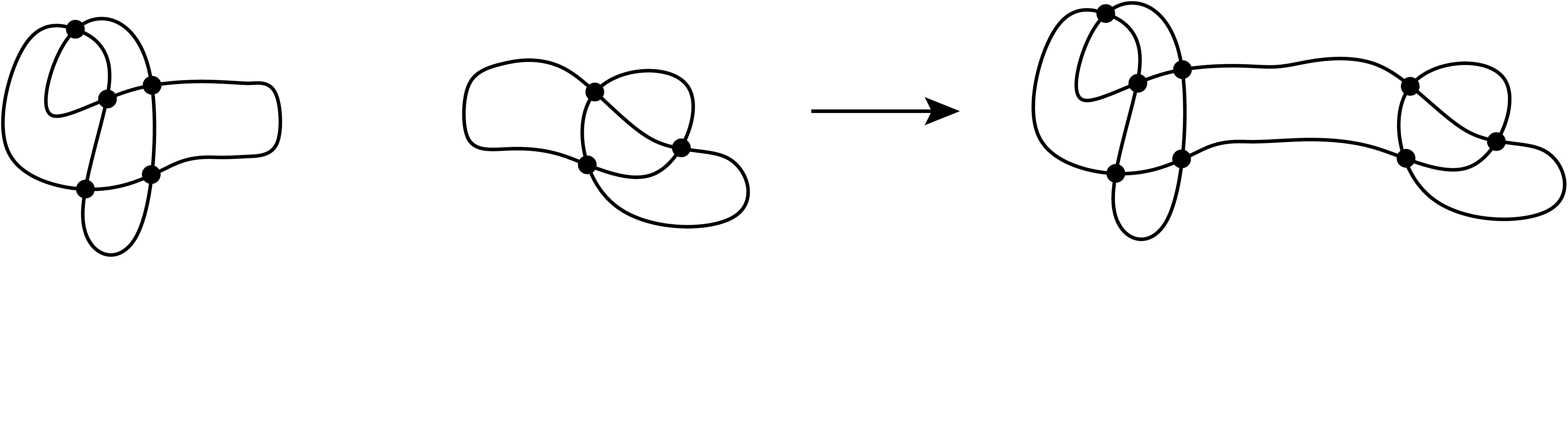}
\caption{If we cut open the edges $e_1$ and $e_2$ and glue the severed ends as shown, we merge $S_1$ and $S_2$ into a single shadow $S$, a {\em connected sum} $S_1\# S_2$ of $S_1$ and $S_2$.}
\label{fig:consum} 
\end{figure} 
% **************************************************************

\vglue 0.4 cm
\noindent{\bf Definition. }(Prime shadows). {\em A connected shadow $S$ with at least two vertices is {\em prime} if it is not a connected sum of two shadows each with at least one vertex.}
\vglue 0.4 cm

It is easy to verify that we may equivalently define prime shadows as follows.

\begin{observation}[Two equivalent conditions for a shadow to be prime]\label{obs:equivtoprime}
A connected shadow with at least two vertices is prime if and only if it has no $2$-edge cut. Equivalently, this holds if and only if its underlying multigraph is $3$-edge-connected.
\end{observation}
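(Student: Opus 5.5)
We need to prove Observation~\ref{obs:equivtoprime}: a connected shadow $S$ with at least two vertices is prime if and only if it has no $2$-edge cut, and this is equivalent to its underlying multigraph being $3$-edge-connected. The plan is to prove the two directions of the first equivalence separately, with the second equivalence coming almost for free.

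\emph{The second equivalence.} Since $S$ is $4$-regular, every edge cut has even size, because the degree sum over any vertex set $X$ equals $2|E(X)|$ plus the number of edges leaving $X$. So $S$ has no $1$-edge cut. As $S$ is connected, it is therefore $3$-edge-connected exactly when it has no $2$-edge cut. One convention needs fixing: a $2$-edge cut means a set of two edges whose removal disconnects $S$. A loop is never in a minimal cut, so loops cause no trouble.

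\emph{Not prime implies a $2$-edge cut.} Suppose $S=S_1\# S_2$ with $S_1,S_2$ each having a vertex. By the construction in Figure~\ref{fig:consum}, the two edges $e_1,e_2$ are cut, and the four loose ends are reconnected by two new edges $f,f'$, each joining an end from $S_1$ to an end from $S_2$. Removing $f,f'$ separates the nonempty vertex set of $S_1$ from that of $S_2$. Hence $\{f,f'\}$ is a $2$-edge cut.

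\emph{A $2$-edge cut implies not prime.} This is the main direction. Let $\{f,f'\}$ be a $2$-edge cut, and let $X$ and $Y$ be the vertex sets on its two sides. Both are nonempty, and each of $f,f'$ has one end in $X$ and one in $Y$; if both ends of $f$ lay on one side, then $f'$ alone would be a $1$-edge cut, which parity forbids. The key step is topological: find a simple closed curve $C$ in $\sphere$ that meets $S$ exactly once in the interior of $f$, exactly once in the interior of $f'$, and nowhere else, and that separates $X$ from $Y$. To build it, take a regular neighbourhood $N$ of the subgraph induced on $X$ together with half-edges of $f$ and $f'$. This subgraph is connected after adding back $Y$ contracted to a point, but it may itself be disconnected, so I would instead work with the complement. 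Let $R$ be the union of the closed faces of $S$ incident to vertices of $X$, restricted appropriately. Then walk along the face boundaries: the two faces on either side of $f$ must also be incident to $f'$. Otherwise the dual cycle argument fails, since a minimal edge cut in a plane graph corresponds to a simple cycle in the dual graph. This is the cleanest route. A minimal edge cut $\{f,f'\}$ is dual to a $2$-cycle in the dual, i.e.\ two faces $F,F'$ both incident to $f$ and $f'$. Drawing an arc through $F$ from $f$ to $f'$ and another through $F'$ gives $C$.

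With $C$ in hand, cut $S$ along $C$. On each disc bounded by $C$, close up the two severed ends by an arc running along $C$ inside that disc, which introduces no crossings. This gives shadows $S_1$ (containing $X$) and $S_2$ (containing $Y$), each with at least one vertex. Reversing the operation of Figure~\ref{fig:consum} shows $S=S_1\# S_2$, so $S$ is not prime.

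I expect the main obstacle to be the justification of the curve $C$. This relies on the standard cut--cycle duality for connected plane multigraphs, which must be applied carefully in the presence of loops and parallel edges. The rest of the argument is bookkeeping.
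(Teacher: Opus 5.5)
Your proof is correct, and it is consistent with what the paper does. For the second equivalence you give the paper's argument: the paper notes that a connected $4$-regular multigraph is Eulerian and therefore has no cut-edge, which is exactly your degree-sum count showing that every edge cut has even size.

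For the first equivalence the paper only says ``it is easy to verify''. So your argument is not a competing route; it supplies the omitted content. The key step is cut--cycle duality:
\begin{itemize}
\item $\{f,f'\}$ is a bond, since there are no bridges.
\item Hence $\{f^*,f'^*\}$ is a $2$-cycle in the dual.
\item This gives two distinct faces $F,F'$, each incident with both $f$ and $f'$, and the separating curve $C$ is drawn through them.
\end{itemize}
This is the right mechanism, and it handles loops and parallel edges without extra work.

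Two small clean-ups when you write it out:
\begin{itemize}
\item Delete the abandoned regular-neighbourhood digression. Its remark that $S[X]$ ``may itself be disconnected'' is false. Since $S-f$ is connected, $S-\{f,f'\}$ has exactly two components, with vertex sets $X$ and $Y$, so $S[X]$ and $S[Y]$ are connected.
\item State explicitly that $C$ crosses $f$ transversally at the common endpoint of your two arcs. The arcs approach that point from $F$ and from $F'$, which lie on opposite sides of $f$. This is what places the two ends of $f$, and hence the connected subgraphs $S[X]$ and $S[Y]$, in different discs bounded by $C$.
\end{itemize}
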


We note that since a connected $4$-regular multigraph is Eulerian and hence has no cut-edge, the last equivalence in this observation simply says that there is no edge cut of size less than three.

As we mentioned above, the notion of primeness in shadows and the connected sum of shadows are natural analogues of their counterparts in links. Indeed, resolving $S_1\# S_2$ evidently yields a connected sum of a resolution of $S_1$ and a resolution of $S_2$. Thus the following is an immediate consequence of the uniqueness of the decomposition of a knot into prime knots (Schubert~\cite{schubert}), together with the fact, noted above, that every knot shadow resolves into the trivial knot.

\begin{theorem}\label{thm:motiv} 
Let $K$ be a prime knot. If a knot shadow $S$ is a connected sum $S_1\# \cdots \# S_k$, then $S$ resolves into $K$ if and only if some $S_i$ resolves into $K$.
\end{theorem}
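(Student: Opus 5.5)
We prove the two implications separately, using only two facts: how a resolution of a connected sum decomposes, and Schubert's uniqueness of prime decomposition. By induction on $k$ it suffices to treat $S=S_1\# S_2$. An iterated connected sum $S_1\#\cdots\# S_k$ is obtained by repeatedly summing, so we may write it as $S'\# S_j$, where $S'$ is itself an iterated sum of the remaining $S_i$. A knot shadow has one component, and each $S_i$ here is a knot shadow, since the sum is cut and reglued along one edge of each summand.

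\textbf{Key observation.} Fix any resolution $D$ of $S=S_1\# S_2$. Its crossings are exactly those of $S_1$ and $S_2$, so restricting the over/under choices gives resolutions $D_1$ of $S_1$ and $D_2$ of $S_2$. The reconnection in the definition of $S_1\# S_2$ takes place inside a disk that meets $S$ in two arcs and contains no crossings. This disk lies in a sphere $\Sigma\subset\sphere$ region separating $S_1$ from $S_2$, and the boundary circle of the disk meets the diagram in exactly two points. Hence $D$ is a diagrammatic connected sum, and the knot $K(D)$ it represents is isotopic to $K(D_1)\# K(D_2)$. Conversely, any pair of resolutions $D_1,D_2$ assembles into a resolution $D$ of $S$ in the same way.

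\textbf{($\Leftarrow$).} Suppose $S_1$ resolves into $K$ via $D_1$. Resolve $S_2$ into the trivial knot via some $D_2$; this is possible because every knot shadow resolves into the unknot. The assembled resolution $D$ of $S$ then represents $K\#U\cong K$.

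\textbf{($\Rightarrow$).} Suppose $D$ resolves $S$ into $K$. Then $K\cong K(D_1)\# K(D_2)$. Since $K$ is prime, one of the two summands is trivial; say $K(D_2)$ is the unknot. Then $K\cong K(D_1)$, so $S_1$ resolves into $K$.

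Strictly speaking this step only needs the definition of primeness. In the $k$-fold form, write $K\cong K(D_1)\#\cdots\# K(D_k)$. By Schubert's theorem the multiset of prime factors on the right must be exactly $\{K\}$, so exactly one $K(D_i)$ is isotopic to $K$ and the rest are trivial. This gives the statement directly, without induction.

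\textbf{Main obstacle.} The only point needing care is the key observation: that the connected sum of shadows, once resolved, really is a connected sum of the resolved knots. Concretely, we need a simple closed curve in $\sphere$ meeting $D$ in exactly two points, with $D_1$ on one side and $D_2$ on the other. We take this curve from the construction in Figure~\ref{fig:consum}: $S_1$ and $S_2$ sit in disjoint disks, and we enlarge the disk around $S_1$ until it contains the cut region, so that its boundary crosses only the two reconnecting arcs. This curve gives the standard decomposing $2$-sphere in ${\mathbb S}^3$. Orientation and mirror conventions cause no trouble, because the same over/under data is simply restricted to each side.
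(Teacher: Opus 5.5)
Your proof is correct and follows the paper's route. The paper notes that resolving $S_1\# S_2$ yields a connected sum of resolutions of $S_1$ and $S_2$, then appeals to Schubert's uniqueness theorem and to the fact that every knot shadow resolves into the unknot; your separating-circle argument spells out the step the paper calls evident, and your remark that the two-summand induction needs only the definition of primeness (not Schubert's uniqueness) is a valid minor refinement.
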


Focusing on knot shadows first, in the context of Theorem~\ref{thm:knots} we are interested in deciding, given a knot shadow $S$, whether $S$ resolves into a knot $K$ in $\{3_1,4_1,5_1,5_2,6_2\}$. All these knots are prime. Therefore if $S$ is a connected sum $S_1\# S_2$ then $S$ resolves into $K$ if and only if $S_1$ or $S_2$ resolves into $K$.

Loosely speaking, this means that in order to verify if $S$ resolves into $K$ we may recursively ``reduce'' $S$ along $2$-edge cuts, until we have decomposed $S$ into prime shadows with the property that $S$ resolves into $K$ if and only if at least one of these prime shadows resolves into $K$. The following lemma shows that this reduction can be performed in linear time.

\begin{lemma}\label{lem:primesknots}
Given the Gauss code of a knot shadow $S$ on $n$ vertices, one can compute, in $O(n)$ time, the Gauss codes of a possibly empty collection of prime knot shadows $S_1,\ldots,S_k$, such that
$$
|S_1|+\cdots+|S_k|\leq n,
$$
and with the property that for every nontrivial prime knot $K$
$$
S\text{ resolves into }K
\quad\Longleftrightarrow\quad
S_i\text{ resolves into }K\text{ for some }i.
$$
\end{lemma}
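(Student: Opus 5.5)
The plan is to work directly with the Gauss code $\Gamma=a_1^{\varepsilon_1}\cdots a_{2n}^{\varepsilon_{2n}}$ as a cyclic word. For a knot shadow, a $2$-edge cut corresponds exactly to a \emph{cyclic interval} of $\Gamma$ that is closed under pairing: a nonempty proper factor $a_i\cdots a_j$ (cyclically) in which every vertex label occurs either twice or not at all. Cutting along the two edges bounding this factor splits $S$ as $S_1\# S_2$. Here $S_1$ has Gauss code equal to the factor itself, with the signs inherited. The signs remain valid because the local picture at each vertex is unchanged by the connected sum operation of Figure~\ref{fig:consum}. Likewise $S_2$ has Gauss code equal to the complementary factor. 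Conversely, every $2$-edge cut of a connected $4$-regular plane graph arising from a single straight-ahead component separates the traversal into two arcs, so it arises this way. By Observation~\ref{obs:equivtoprime}, $S$ is prime precisely when no such factor exists (and $n\ge 2$).

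The key algorithmic step is to find a decomposition into \emph{maximal} prime pieces in linear time. I would reformulate it as follows. Put the $2n$ positions on a circle, and regard each vertex as a chord joining its two occurrences. The prime pieces then correspond to the classes of the ``interlacing closure'': two chords are related if they cross, and we take the transitive closure. The intended claim is that a connected component $C$ of the interlacement (circle) graph gives a prime shadow by keeping only the letters of $C$. More precisely, contracting the other chords produces a Gauss code whose closed factors are trivial, because any closed factor would separate $C$'s interlacement graph. Also, $S$ is a connected sum of these pieces together with single-vertex loop pieces (isolated chords). These single-vertex pieces resolve only to the unknot, so I would drop them, since $K$ is nontrivial. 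The size bound $\sum|S_i|\le n$ is then immediate, because the pieces partition the vertex set. Correctness then follows from Theorem~\ref{thm:motiv} applied to the iterated connected sum, noting that the subword of $\Gamma$ on the letters of $C$, with original signs, is the Gauss code of the corresponding summand.

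Computing the connected components of a circle graph in $O(n)$ time, without building the possibly quadratic edge set, is the main obstacle. I would first try the standard stack-based sweep. Scan positions $1,\ldots,2n$. Maintain a stack of ``open blocks'', each storing its set of open chords and the minimal first-occurrence position among them. When the second occurrence of a chord $v$ is reached, pop and merge (union--find, or linked-list concatenation) all blocks above the block containing $v$, since those chords opened after $v$ and are still open, hence cross $v$. Each merge is charged to a block disappearing, which gives amortized linear time. Alternatively, I could compute for each position the prefix ``balance'' and detect closed factors via the classical linear-time test for intervals in which every letter appears twice. One way to do this is to assign random $\pm$ weights or hash values to labels, so that a factor is closed iff its prefix-hash difference is zero on paired letters. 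I prefer the deterministic stack method.

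Finally, for each component I would extract the subword of $\Gamma$ in one pass by bucketing positions by component label, which is again $O(n)$, and output these words as the Gauss codes $S_1,\ldots,S_k$. The collection is empty if every component is a single chord, for instance when the shadow is a chain of kinks.
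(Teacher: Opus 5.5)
Your proposal is correct. It produces exactly the same pieces as the paper, but reaches them by a different and more elementary route.

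\textbf{How the paper does it.} The paper never reads the cut structure off the Gauss word. It passes to the underlying multigraph with loops deleted and computes its $3$-edge-connected components and the cactus of its $2$-edge cuts, using the linear-time algorithms it cites (Lemma~\ref{lem:shadowcactus}). Only then does it restrict the input word to each vertex class.

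\textbf{How you do it.} You exploit the single straight-ahead traversal. $2$-edge cuts become closed cyclic factors, and the vertex classes become the connected components of the interlacement graph. You compute these with a self-contained stack sweep, the standard overlap-component algorithm for intervals.

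\textbf{Why the two partitions agree.} The underlying multigraph is Eulerian, so every edge cut has even size. Hence $u$ and $v$ lie in the same $3$-edge-connected component iff no $2$-edge cut separates them. That holds iff no closed factor contains both occurrences of one and none of the other, which is exactly connectivity in the interlacement graph.

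\textbf{What each route buys.} Yours avoids the cactus machinery and gives an explicit combinatorial description of the prime summands. The paper's graph-theoretic lemma treats knots and $2$-component links uniformly: it is reused verbatim for Lemma~\ref{lem:primeslinks}, where no single cyclic word is available.

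Two points deserve tightening.

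\emph{The converse of your separation remark.} You assert that $S$ is an iterated connected sum of the pieces $\Gamma|_C$, and this needs the converse direction. For a component $C$, every chord $w\notin C$ has both endpoints in a single gap between cyclically consecutive endpoints of $C$. Otherwise each side of $w$ contains endpoints of $C$. Since $w$ crosses no chord of $C$, the chords of $C$ would then split into two nonempty groups on opposite sides of $w$, which do not interlace, contradicting the connectivity of $C$. Hence every nonempty such gap is a closed factor that can be split off, and iterating ends exactly at the components.

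\emph{Strict linearity.} Do not locate the block containing $v$ by union--find, which only gives $O(n\alpha(n))$. Instead use the invariant that every chord of a higher block opened after every chord of a lower block. Then pop and merge while the top block's earliest first-occurrence exceeds that of $v$. This key may simply be left stale when chords close, so there is no need to track the minimum over open chords. Finally, pop and finalize any block left with no open chords; this is always the top block, and it can never be merged again.
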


The proof of Lemma~\ref{lem:primesknots} is deferred to Section~\ref{sec:primeproofs}. We conclude the section with the following version for $2$-component links. Its proof is also deferred to Section~\ref{sec:primeproofs}.

%%%%%%%%%%%%%%%%%%%%%%%%%%%%%%%%%%%%%%%%%%%%%%%%%%%%%%%%%%%%%%%%%%%%%%%%%%
\begin{lemma}\label{lem:primeslinks}
Given the Gauss paragraph of a connected $2$-component link shadow $S$ on $n$ vertices, one can compute, in $O(n)$ time, the Gauss paragraph of a connected prime $2$-component link shadow $S'$ with $|S'|\le n$, and with the property that $S$ resolves into a prime link $L$ if and only if $S'$ resolves into $L$.
\end{lemma}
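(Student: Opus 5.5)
We decompose $S$ along its $2$-edge cuts. The key point is that a $2$-edge cut in a connected $2$-component link shadow is essentially a connected-sum decomposition. In such a decomposition one summand must be a knot shadow: it is a ``loop'' of one component that crosses only itself. The other summand is again a connected $2$-component shadow.

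So we first establish the structural claim. Let $\{e,f\}$ be a $2$-edge cut of $S$, and let $A,B$ be the two sides. Each component, viewed as a closed walk, crosses the cut an even number of times. Hence both $e$ and $f$ lie on the same component $W$, and the other component lies entirely on one side, say $A$. The cut exhibits $S$ as $S_A\# S_B$, where $S_B$ is a knot shadow formed from the part of $W$ inside $B$, and $S_A$ is a connected $2$-component shadow.

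Next we argue about resolutions. Any resolution of $S$ is a connected sum of a resolution of $S_A$ (a $2$-component link) with a resolution of $S_B$ (a knot), and every knot shadow resolves into the unknot. By uniqueness of prime decomposition for links (Schubert), if $L$ is prime and non-split, then $S$ resolves into $L$ if and only if $S_A$ does. The knot summand contributes only trivial or knotted factors, and for a prime $2$-component $L$ these must be trivial. We should check that the two-component $L$ cannot arise as a knot factor; this holds because the component count of $L$ forces the whole $L$ to be carried by $S_A$.

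Iterating this, we discard every knot-shadow summand ``hanging off'' a component and keep the $2$-component core $S'$. Then $S'$ has no $2$-edge cut. It is connected, and it has at least two vertices since the components must cross at least twice by parity. By Observation~\ref{obs:equivtoprime}, $S'$ is prime.

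Algorithmically, the hanging knot summands are exactly the maximal subwords of a Gauss word $\Gamma_i$ that are cyclically contiguous and closed. A subword is closed if every label occurring in it occurs twice within it, and the claim above shows these are exactly the cut sides. We must remove all such subwords, including nested ones and ones that become removable after others are deleted, and do so in linear time. This is the main obstacle.

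The plan for it is as follows. Start at a position just after an occurrence of a vertex shared between the two components; such a vertex exists because $S$ is connected and has two components. Scan each word linearly with a stack that matches first and second occurrences of self-crossing labels, in the style of parenthesis matching. A contiguous block is removable exactly when its labels appear in nested ``closed'' fashion without any shared vertex. Equivalently, for each position $j$ we compute the earliest position whose first occurrence lies within the block; a block $[i,j]$ is closed iff the minimum first-occurrence index over $[i,j]$ is at least $i$ and the maximum second-occurrence index is at most $j$. Between consecutive shared vertices we then greedily delete maximal closed intervals using a single stack pass, which gives $O(n)$ overall.

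Deleting a subword whose labels are all internal corresponds exactly to replacing the hanging summand by an arc, so the output paragraph is valid and $|S'|\le n$. After this pass, no further closed intervals remain, so $S'$ has no $2$-edge cut.
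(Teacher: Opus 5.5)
\textbf{Verdict: a genuine gap, in the linear-time algorithm, which is the only nontrivial content of the lemma.} The overall route is viable and differs from the paper's.

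\textbf{How your route differs.} The paper does not look for cuts in the words. It computes the $3$-edge-connected components and the cactus of $2$-edge cuts of the underlying multigraph in linear time (Lemma~\ref{lem:shadowcactus}, citing Tsin and Mehlhorn et al.), and keeps the unique two-component piece. You instead use that the sides of $2$-edge cuts avoiding the other component are exactly the closed blocks of a single component word. Such blocks never contain shared vertices, so the problem linearizes between consecutive shared vertices. Completed, this would be a self-contained, elementary proof.

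\textbf{What is already fine.} Your structural and resolution arguments are correct, with three small remarks.
\begin{enumerate}
\item You use both directions of the block/cut correspondence. The converse is immediate, since only the two boundary edges leave a closed block.
\item Neither Schubert nor a non-split hypothesis is needed. If $L=L_A\# K_B$ with $L$ prime and $L_A$ having two components, the definition of primeness forces $K_B$ to be trivial.
\item Your worry about blocks ``that become removable after others are deleted'' is unfounded, but you should say why. Take a closed block of the reduced word and reinsert the deleted blocks lying between its letters. The result is a closed block of the original word containing a surviving letter, which is impossible. So one round of deleting all maximal closed blocks suffices.
\end{enumerate}

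\textbf{The gap.} The procedure you describe is a stack matching occurrences in the style of parenthesis matching, with the removable blocks being the nested ones. This is wrong, because closed blocks need not be nested. Suppose a component makes a trefoil-shaped kink between two consecutive crossings with the other component. The block is then $1^+2^-3^+1^-2^+3^-$ (a $T(2,3)$ summand). It is closed but fails parenthesis matching, so it would survive, and your output would still have a $2$-edge cut.

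The min/max criterion you then give is the correct characterization, but it is not equivalent to nesting. On its own it only tests a given interval, which leads to a quadratic search. ``Greedily delete maximal closed intervals using a single stack pass'' is not yet an algorithm.

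\textbf{A fix.} What works is a stack of merged blocks rather than of matched labels. Scan each segment left to right, keeping for each block its start and its reach (the largest partner position of its letters).
\begin{enumerate}
\item At a first occurrence, push a new block.
\item At a second occurrence $k$, pop every block with start $>m(k)$. Merge these, together with $k$, into the block containing $m(k)$, which is now on top.
\item At a letter whose partner lies outside the segment, merge the whole stack with it into one block of reach $\infty$.
\item Whenever the top block has reach $\le k$, record $[\mathrm{start},k]$ and pop it.
\end{enumerate}
Each position is pushed and popped at most once, so this runs in $O(n)$. The blocks are the components of the interleaving relation on pairs of occurrences. From this one checks that the union of the recorded intervals is exactly the union of all closed blocks.

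With this, or simply with the paper's cactus computation, your proof goes through.
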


%%%%%%%%%%%%%%%%%%%%%%%%%%%%%%%%%%%%%%%%%%%%%%%%%%%%%%%%%%%%%%%%%%%%%%%%%%5

\section{The families of shadows involved in the characterizations}\label{sec:theshadows}

Several of the characterizations of the shadows that resolve into the knots in Theorem~\ref{thm:knots} or into the links in Theorem~\ref{thm:links} involve excluding one or more families of shadows. Our purpose in this section is to recall these families.

\subsection{Torus shadows}

Several characterization lemmas involve the one-parameter family of shadows $T(2,p)$. We call the shadows in this family {\em torus shadows} because, as we illustrate in Figure~\ref{fig:torusknotlink} for the cases $p=7$ and $p=8$, for each integer $p \ge 1$ the shadow $T(2,p)$ is the shadow of the usual diagram of the $(2,p)$-torus knot if $p$ is odd, or of the $(2,p)$-torus link if $p$ is even. Thus, regardless of the parity of $p$, the torus shadow $T(2,p)$ has $p$ vertices. 

% **************************************************************
\begin{figure}[htbp] 
\def\ta#1{{\Scale[3.6]{#1}}} 
\def\tb#1{{\Scale[4.0]{#1}}} 
\def\tc#1{{\Scale[3.8]{#1}}} 
\def\somea{{\Scale[4.8]{\text{\rm (a)}}}} 
\def\somea{{\Scale[4.8]{\text{\rm (a)}}}} 
\def\someb{{\Scale[4.8]{\text{\rm (b)}}}} 
\def\somec{{\Scale[4.8]{\text{\rm (c)}}}} 
\def\somed{{\Scale[4.8]{\text{\rm (d)}}}} 
\def\somee{{\Scale[4.8]{\text{\rm (e)}}}} 
\def\somef{{\Scale[4.8]{\text{\rm (f)}}}} 
\def\tsa{\scalebox{9}{\small\rmfamily (Knot shadow)}}
\def\ltsa{\scalebox{9}{\small\rmfamily ($2$-component link shadow)}}
\centering 
\scalebox{0.12}{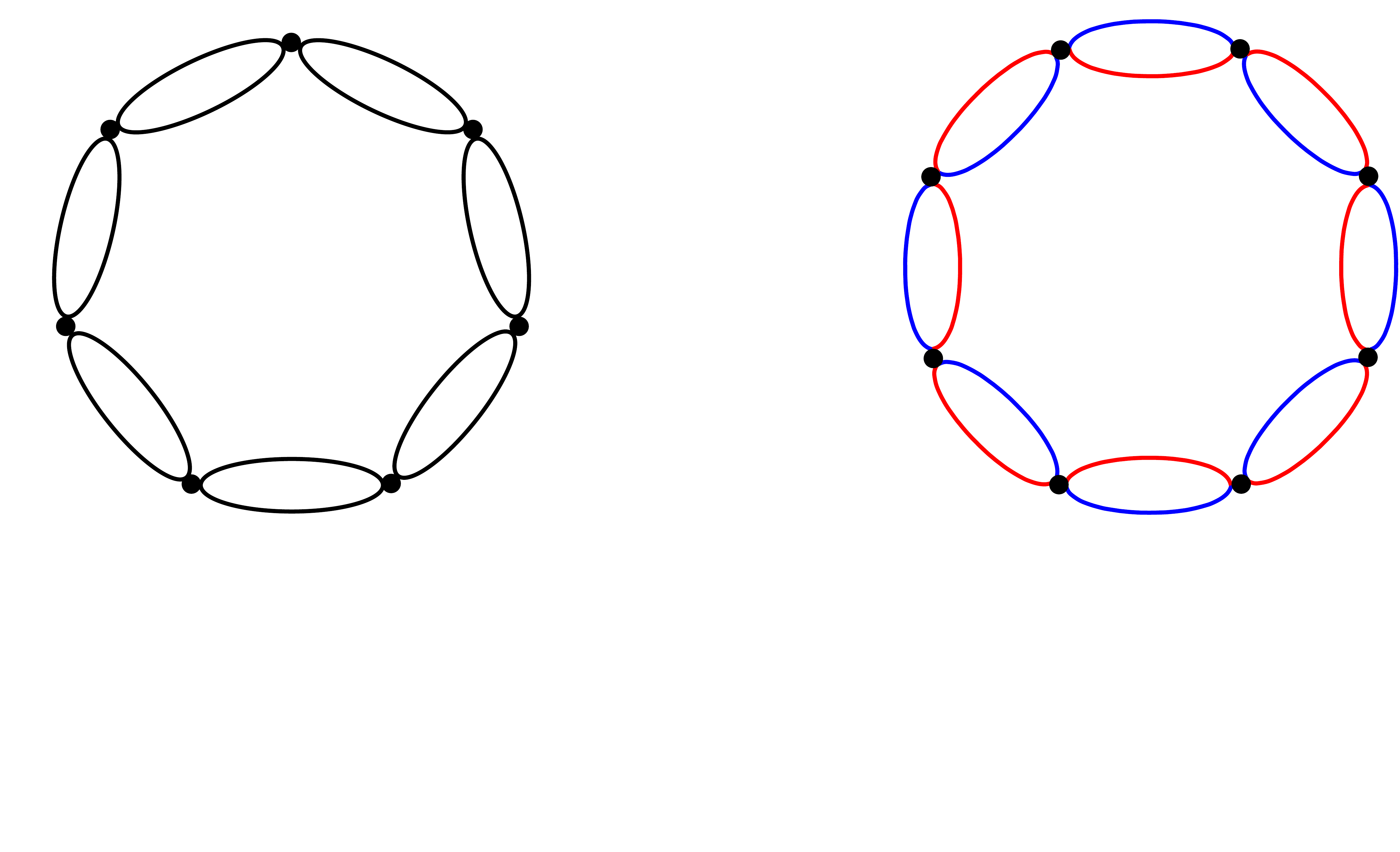}
\caption{The torus shadow $T(2,7)$ is the shadow of the usual diagram of the $(2,7)$-torus knot ($7_1$ in Rolfsen's table), and $T(2,8)$ is the shadow of the usual diagram of the $2$-component $(2,8)$-torus link (L8a14 in Thistlethwaite's link table). One of the components of $T(2,8)$ is coloured blue, and the other component is coloured red.}
\label{fig:torusknotlink} 
\end{figure} 
% **************************************************************

For $p=1$, the shadow $T(2,1)$ consists of one vertex and two loop edges. For $p=2$, its two vertices are joined by four parallel edges. For $p\geq3$, the vertices of $T(2,p)$ can be labelled $a_1,\ldots,a_p$ cyclically so that, for each $i$, the vertices $a_i$ and $a_{i+1}$ are joined by two parallel edges, where indices are read modulo $p$.

\subsection{Pretzel shadows}

As we illustrate in Figure~\ref{fig:pretzels} for the case $p_1=7, p_2=3, p_3=5$, for any odd positive integers $p_1, p_2,p_3$ the {\em pretzel shadow} $S(p_1,p_2,p_3)$ is the shadow of the usual diagram of the pretzel knot $P(p_1,p_2,p_3)$.

% **************************************************************
\begin{figure}[htbp] 
\def\ta#1{{\Scale[5.0]{#1}}} 
\def\tb#1{{\Scale[3.6]{#1}}} 
\def\td#1{{\Scale[4.6]{#1}}} 
\def\Ap#1{{\Scale[4.6]{a_{{}_{#1}}}}} 
\def\Bp#1{{\Scale[4.6]{b_{{}_{#1}}}}} 
\def\Cp#1{{\Scale[4.6]{c_{{}_{#1}}}}} 
\def\somea{{\Scale[4.8]{\text{\rm (a)}}}} 
\def\someb{{\Scale[4.8]{\text{\rm (b)}}}} 
\def\somec{{\Scale[4.8]{\text{\rm (c)}}}} 
\def\somed{{\Scale[4.8]{\text{\rm (d)}}}} 
\def\somee{{\Scale[4.8]{\text{\rm (e)}}}} 
\def\somef{{\Scale[4.8]{\text{\rm (f)}}}} 
\def\psa{\scalebox{11}{\small\rmfamily Pretzel shadow}}
\centering 
\scalebox{0.1}{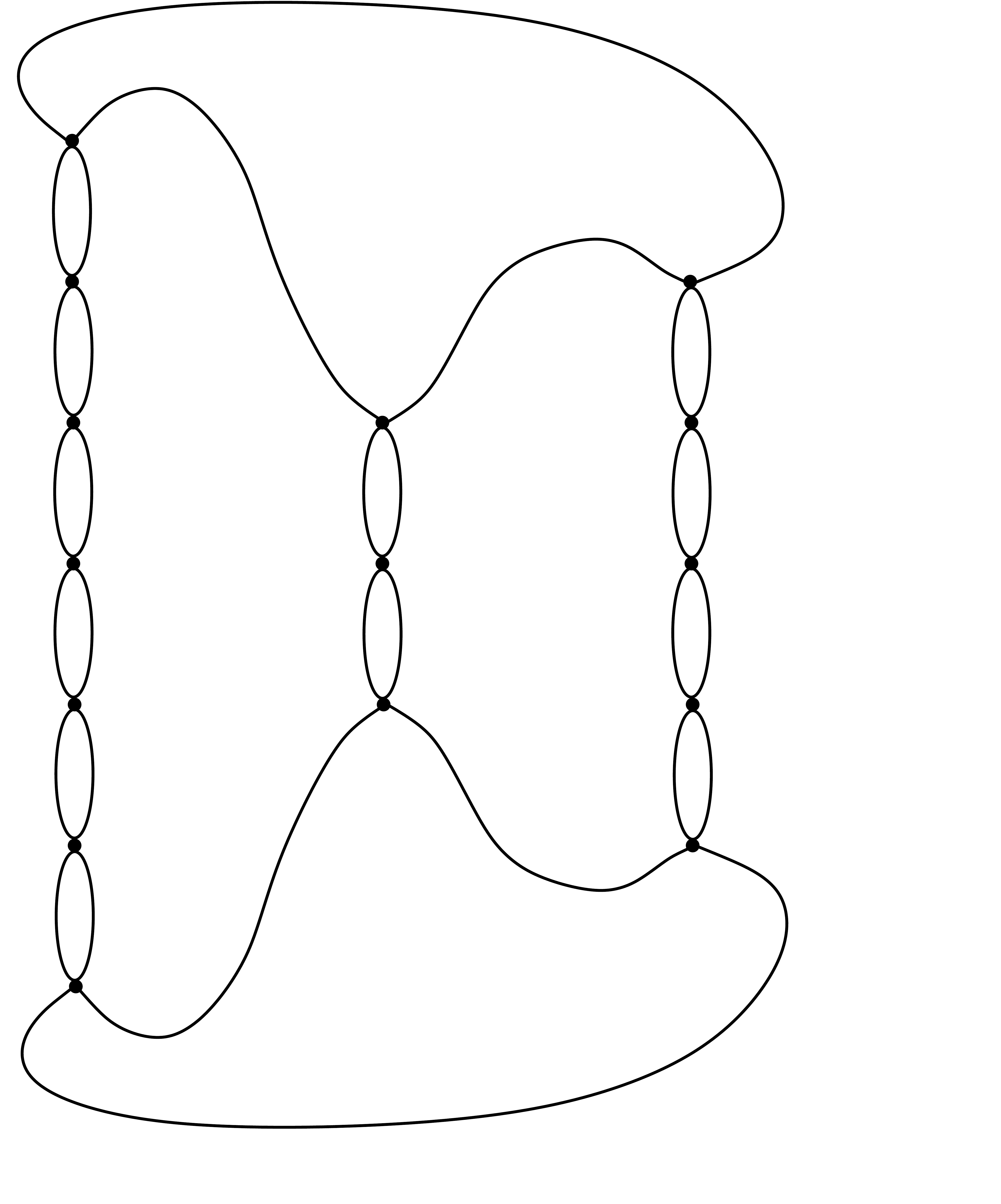}
\caption{The pretzel shadow $S(7,3,5)$ is the shadow of the usual diagram of the pretzel knot $P(7,3,5)$.}
\label{fig:pretzels} 
\end{figure} 
% **************************************************************

As in Figure~\ref{fig:pretzels}, the vertices of $S(p_1,p_2,p_3)$ are naturally partitioned into three ``chains'', a chain $A=(a_1,\ldots,a_{p_1})$, a chain $B=(b_1,\ldots,b_{p_2})$, and a chain $C=(c_1,\ldots,c_{p_3})$. For $i=2,\ldots,p_1-1$ vertex $a_i$ is adjacent via two parallel edges to each of $a_{i-1}$ and $a_{i+1}$, and similarly for $b_i$ for $i=2,\ldots,p_2-1$ and each $c_i$ for $i=2,\ldots,p_3-1$ (in a degenerate case in which for instance $p_3=1$, there is only one vertex in the $C$ chain and so there are no such parallel edges).  Finally, $a_1, b_{p_2}$, and $c_1$ are adjacent to each other, and $a_{p_1}, b_{1}$, $c_{p_3}$ are adjacent to each other.

The labelling of the vertices $b_1,\ldots,b_{p_{{}_2}}$ may seem counterintuitive: in Figure~\ref{fig:pretzels} the indices of the $a_i$ and of the $c_i$ increase downwards, whereas the indices of the $b_i$ increase upwards. As it happens, this choice turns out to be very convenient when we deal with the Gauss code of these shadows.

\subsection{Nutcracker shadows}

The last infinite family of shadows involved in the characterizations is that of the {\em nutcracker shadows}. As we illustrate in Figure~\ref{fig:nuts} for the case $s=2, t=3$, for any two positive integers $s$ and $t$ the shadow $N(2s,2t)$ has $2s+2t$ vertices, with $2s$ of them forming a chain $A=(a_1,\ldots,a_{2s})$ and the other $2t$ vertices forming another chain $B=(b_1,\ldots,b_{2t})$. As with pretzel shadows, for $i=1,\ldots,2s-1$ vertices $a_i$ and $a_{i+1}$ are joined by two parallel edges, and for $i=1,\ldots,2t-1$ vertices $b_i$ and $b_{i+1}$ are joined by two parallel edges. Finally, each of $a_1$ and $a_{2s}$ is joined by an edge to each of $b_1$ and $b_{2t}$.

% **************************************************************
\begin{figure}[htbp] 
\def\ta#1{{\Scale[5.0]{#1}}} 
\def\tb#1{{\Scale[3.6]{#1}}} 
\def\td#1{{\Scale[4.6]{#1}}} 
\def\Ap#1{{\Scale[4.6]{b_{{}_{#1}}}}} 
\def\Bp#1{{\Scale[4.6]{b_{{}_{#1}}}}} 
\def\Cp#1{{\Scale[4.6]{a_{{}_{#1}}}}} 
\def\somea{{\Scale[4.8]{\text{\rm (a)}}}} 
\def\someb{{\Scale[4.8]{\text{\rm (b)}}}} 
\def\somec{{\Scale[4.8]{\text{\rm (c)}}}} 
\def\somed{{\Scale[4.8]{\text{\rm (d)}}}} 
\def\somee{{\Scale[4.8]{\text{\rm (e)}}}} 
\def\somef{{\Scale[4.8]{\text{\rm (f)}}}} 
\def\psa{\scalebox{11}{\small\rmfamily Nutcracker shadow}}
\centering 
\scalebox{0.1}{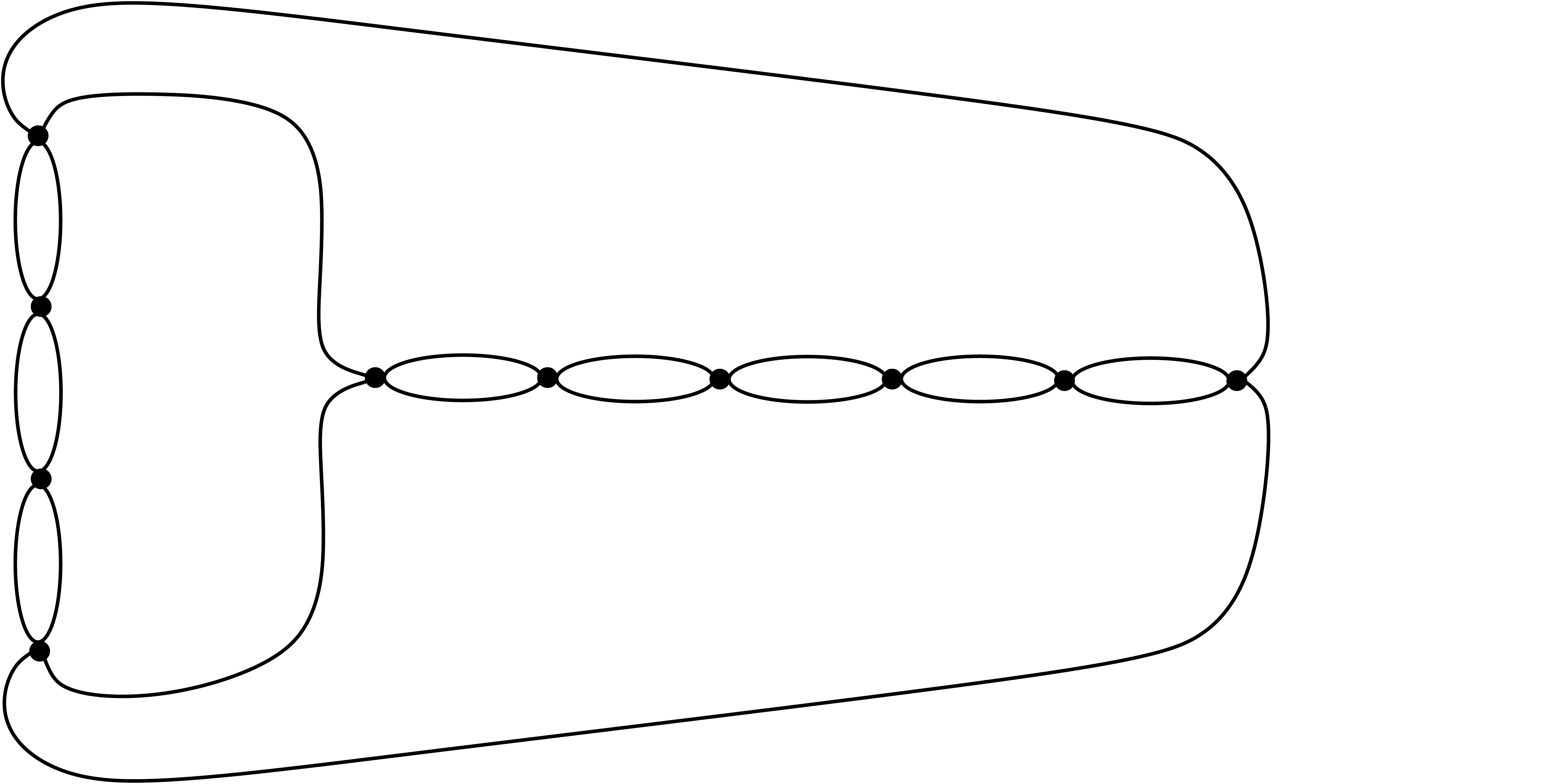}
\caption{The nutcracker shadow $N(4,6)$.}
\label{fig:nuts} 
\end{figure} 
% **************************************************************

\subsection{Two sporadic shadows}

The last characterization lemma for knots states which shadows resolve into the knot $6_2$, and involves the three infinite families of shadows identified above, plus two very specific shadows, one with six vertices and one with eight vertices.

As we illustrate in Figure~\ref{fig:6_and_8}, we let $S(6_3)$ denote the shadow of the usual diagram of the knot $6_3$, and we let $S(8_{18})$ be the shadow of the usual diagram of the knot $8_{18}$.

% **************************************************************
\begin{figure}[htbp] 
\def\ta#1{{\Scale[3.0]{#1}}} 
\def\tb#1{{\Scale[6.5]{#1}}} 
\def\somea{{\Scale[4.8]{\text{\rm (a)}}}} 
\def\someb{{\Scale[4.8]{\text{\rm (b)}}}} 
\def\somec{{\Scale[4.8]{\text{\rm (c)}}}} 
\def\somed{{\Scale[4.8]{\text{\rm (d)}}}} 
\def\somee{{\Scale[4.8]{\text{\rm (e)}}}} 
\def\somef{{\Scale[4.8]{\text{\rm (f)}}}} 
\def\pluschords{{\Scale[4.4]{${\text{\rm plus all its possible chords}}$}}} 
\centering 
\scalebox{0.15}{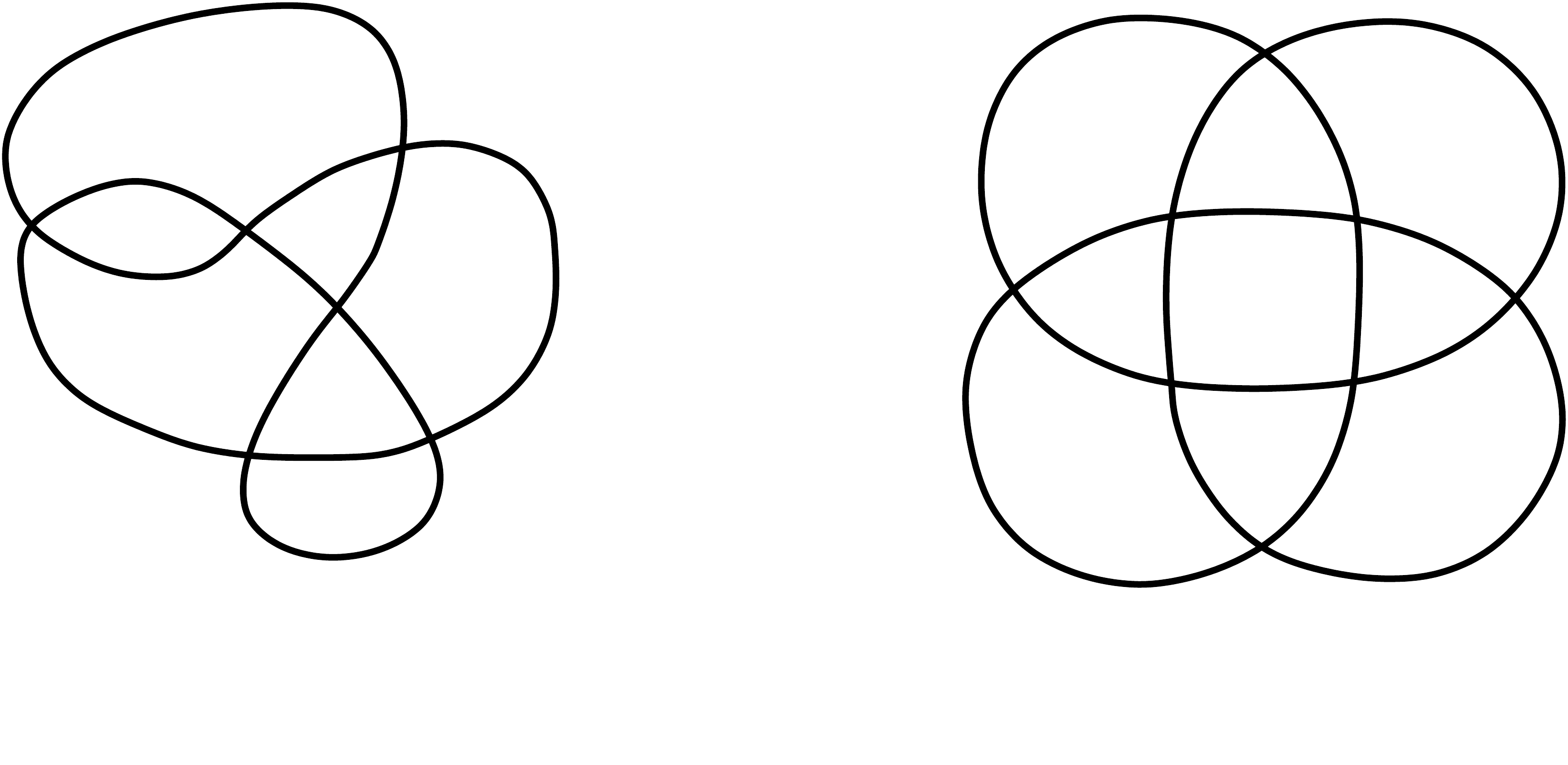}
\caption{The shadows $S(6_3)$ and $S(8_{18})$ do not resolve into the knot $6_2$ (Lemma~\ref{lem:62}).}
\label{fig:6_and_8} 
\end{figure} 
% **************************************************************

%%%%%%%%%%%%%%%%%%%%%%%%%%%%%%%%%%%%%%%%%%%%%%%%%%%%%%%%%%%%%%%%%%%%%%%%%%5

\subsection{The recognition lemmas}\label{sub:testingks}

We finish this section by stating the existence of $O(n)$ time algorithms to test whether a given shadow $S$ on $n$ vertices is equivalent to one of the shadow families discussed above. In order to keep the discussion going, we defer the proofs of these lemmas to Section~\ref{sec:equivtests}.

\begin{lemma}\label{lem:testknot}
Given the Gauss code of a knot shadow $S$ on $n$ vertices, one can answer in $O(n)$ time each of the following queries:
\begin{enumerate}

\item[(i)] Is $S$ equivalent to a torus shadow?

\item[(ii)] Is $S$ equivalent to a pretzel shadow?

\item[(iii)] Is $S$ equivalent to a nutcracker shadow?

\end{enumerate}

\noindent Moreover, for each fixed shadow $S_0$ (we have in mind especially $S(6_3)$ and $S(8_{18})$)  one can answer in $O(n)$ time the following query:

\begin{enumerate}

\item[(iv)] Is $S$ equivalent to $S_0$?

\end{enumerate}

\end{lemma}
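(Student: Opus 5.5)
The plan is to reduce every query to a congruence test between the input Gauss code $\Gamma$ and a candidate code, using Proposition~\ref{pro:carter}. For each family we will (a) read off the candidate parameters from $\Gamma$ in $O(n)$ time, (b) write down in $O(n)$ time an explicit Gauss code $\Gamma_0$ of the unique candidate member of the family, and (c) decide in $O(n)$ time whether $\Gamma$ and $\Gamma_0$ are congruent.

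For (c) we use canonical forms. We relabel each of $\Gamma$, $\rev{\Gamma}$, $\Neg{\Gamma}$ and $\rev{\Neg{\Gamma}}$ by order of first occurrence, starting from each of the $2n$ basepoints. This is only too slow if done naively. Instead, we replace $\Gamma$ by the relabelling-invariant word $\delta(\Gamma)$ of length $2n$. Its $i$-th symbol records the sign $\varepsilon_i$ together with the cyclic distance $d_i$ from position $i$ to the other occurrence of $a_i$.

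Relabelling does not change $\delta$, and cyclic translation of $\Gamma$ cyclically translates $\delta$. Reversal and negation act on $\delta$ by explicit linear-time transformations: reversal reverses $\delta$ and replaces each $d_i$ by $2n-d_i$, and negation flips all signs. Conversely, $\Gamma$ is recovered from $\delta(\Gamma)$ up to relabelling. Hence $\Gamma\cong\Gamma_0$ if and only if $\delta(\Gamma_0)$ is a cyclic rotation of one of the four transforms of $\delta(\Gamma)$. Each such rotation test is done in linear time by searching for $\delta(\Gamma_0)$ in the doubled word with Knuth--Morris--Pratt; the symbols are integers bounded by $2n$, so comparisons are $O(1)$.

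This already settles (iv), since $S_0$ is fixed and $\Gamma_0$ is a constant-size code. We compare lengths first and then run the test. It also settles (i): $T(2,p)$ is a knot shadow only for odd $p$, its code is $1^+2^-3^+\cdots$ (with the appropriate pattern) with $p=n$, and it is unique once $n$ is known.

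For (ii) and (iii) the parameters are not determined by $n$ alone, so step (a) needs structure.

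\begin{itemize}
\item \emph{Nutcracker shadows.} A vertex of $N(2s,2t)$ lies in a chain interior exactly when it is joined to a neighbour by a double edge. In the code, maximal runs of consecutive positions that alternate between two vertices $\ldots a_i a_{i+1}\ldots$ with matching reappearances identify the chains. We compute all multi-edges in $O(n)$ by scanning consecutive pairs of positions. The chain lengths $2s$ and $2t$ are the sizes of the two maximal double-edge paths; if there are more or fewer than two such paths, we answer no.
\item \emph{Pretzel shadows.} The same procedure gives the three chain lengths $p_1,p_2,p_3$, up to permutation; a single-vertex chain appears as a vertex with no double edge. There are at most six orderings, and hence at most $O(1)$ candidates $\Gamma_0$, each tested as above.
\end{itemize}

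The main obstacle is step (a) in degenerate cases: very short chains, such as $p_i=1$, or $s=1$ where the chain length $2s=2$ makes the double-edge structure coincide with the connecting edges. Here the double-edge count can be ambiguous. Our remedy is to avoid relying on exact structural identification. We compute a constant-size set of candidate parameter tuples, namely all tuples consistent with $n$ and the multiset of double-edge path lengths, and let the congruence test (c) be the arbiter. Correctness then only requires that the true parameters are always among the candidates, which follows because a double edge in these shadows can join only two consecutive chain vertices or appear in the constantly many small exceptional configurations, which we check separately.
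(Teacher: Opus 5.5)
You have a genuine gap, and it sits exactly at the step you flagged as the main obstacle. The rest of your proposal is sound and takes a different route from the paper. Your encoding $\delta(\Gamma)$, using signs and forward partner distances, has the properties you need: it is invariant under relabelling, it rotates under cyclic translation, it transforms as you say under reversal and negation, and it determines $\Gamma$ up to relabelling. So KMP over the doubled word is a valid linear-time congruence test, and this settles (i) and (iv).

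The paper tests congruence only against the constant-size code in (iv). For (i) it checks directly that the first $n$ symbols $B$ are alternating and that $\Gamma=B\,\Neg{B}$. For (ii) and (iii) it uses \emph{breaks}, i.e. gaps where the partner position fails to decrease by one. In the normal forms these are exactly the block boundaries (Observation~\ref{obs:breaks}), so it suffices to try the at most six (respectively four) cyclic translations that start just after a break. That method never recovers $p_1,p_2,p_3$ or $s,t$, so it treats degenerate cases uniformly.

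The gap is in your step (a) for pretzel shadows, and the justification you give there is false. Double edges that do not join consecutive chain vertices are not confined to ``constantly many small exceptional configurations.''
\begin{itemize}
\item The connecting edges of $S(p_1,p_2,p_3)$ form the triangles $a_1b_{p_2}c_1$ and $a_{p_1}b_1c_{p_3}$. An edge of one triangle is parallel to an edge of the other exactly when two of the $p_i$ equal $1$.
\item Hence for \emph{every} odd $p\ge 3$, the shadow $S(1,1,p)$ has a double edge $a_1b_1$ joining two different chains. Its double-edge components are a $2$-vertex path and a $p$-vertex path, with no isolated vertex.
\item Your rule then returns chain lengths $\{2,p\}$, which is not a triple of odd integers. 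A separate check of boundedly many small shadows cannot catch this infinite family, so the algorithm as described answers ``no'' on $S(1,1,p)$ for all large $p$.
\item In addition, $S(1,1,1)=T(2,3)$ has a $3$-cycle of double edges rather than paths.
\end{itemize}

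The repair is a complete case analysis of the double-edge components:
\begin{itemize}
\item If there are exactly three components, each a path (possibly a single vertex) with an odd number of vertices, take their sizes as the candidate triple.
\item If there is a $2$-vertex path together with an $(n-2)$-vertex path, take the candidate $(1,1,n-2)$.
\item If $n=3$, take the candidate $(1,1,1)$.
\item Otherwise answer ``no.''
\end{itemize}
This candidate set has constant size and always contains the true parameters, so your congruence test can then finish the argument.

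Your worry about $s=1$ for nutcrackers is unfounded. Since $a_1\neq a_{2s}$ and $b_1\neq b_{2t}$, the two chains of $N(2s,2t)$ are always exactly its two maximal double-edge paths, and no other double edges occur.
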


For shadows of $2$-component links we have the following (easier and shorter) version of Lemma~\ref{lem:testknot}.

\begin{lemma}\label{lem:test2link}
Given the Gauss paragraph of a $2$-component link shadow $S$ on $n$ vertices, one can answer in $O(n)$ time each of the following queries:
\begin{enumerate}

\item[(i)] Do the two words that form the Gauss paragraph of $S$ share at least $4$ vertices?

\item[(ii)] Is $S$ equivalent to a torus shadow?

\end{enumerate}

\end{lemma}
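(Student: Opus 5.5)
For query (i) we make one linear scan of the paragraph $\BG=\{\Gamma_1,\Gamma_2\}$. We keep an array indexed by the vertex labels $1,\ldots,n$ that records which of the two words contains each occurrence. A vertex is shared exactly when one of its occurrences is in $\Gamma_1$ and the other is in $\Gamma_2$. We count these vertices in $O(n)$ time and compare the count with $4$.

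For query (ii) we use Proposition~\ref{pro:carterlinks}. First, $S$ is a $2$-component shadow, so if it is equivalent to a torus shadow $T(2,p)$ then $p$ is even and $p=n$. If $n$ is odd we answer no immediately. We also answer no if $S$ is not connected, which we can test in linear time by building the underlying $4$-regular multigraph from the paragraph: each pair of consecutive symbols in a word, read cyclically, gives an edge. Next we write an explicit reference paragraph $\BG_0$ of $T(2,n)$ for $n=2m$. Label the vertices $a_1,\ldots,a_{2m}$ cyclically as in the description of $T(2,p)$. Each component visits every vertex once, and the two components alternate with respect to each other. So one component reads $a_1a_2\cdots a_{2m}$ and the other reads the same cyclic sequence. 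The signs alternate along each word, and the second word carries the opposite signs to the first, because every vertex occurs once with each sign. We will verify this sign pattern from Figure~\ref{fig:torusknotlink}, and handle the degenerate case $n=2$ separately.

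Rather than searching over all congruences, we test directly for a normal form. We check that each word has length exactly $n$ and that every vertex occurs once in each word. Every vertex must then be shared, so the answer to (i) must have been ``all $n$ vertices''. Using at most one single reversal, which here negates every sign because every vertex is shared, and a cyclic translation of $\Gamma_2$, we try to align $\Gamma_2$ with $\Gamma_1$. We read off the position of $\Gamma_1$'s first label in $\Gamma_2$, rotate $\Gamma_2$ to start there, and compare the two words label by label, either in the same direction or after reversing $\Gamma_2$. We then check that the signs alternate along $\Gamma_1$, possibly after a global negation, and that they are complementary in $\Gamma_2$. This accounts for only a constant number of choices, and each comparison takes $O(n)$ time.

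The main obstacle is justifying that these few normalizations exhaust the congruence class. The relabelling is forced once we fix $a_1\mapsto \Gamma_1[1]$ and use the cyclic order of $\Gamma_1$. The remaining freedom is a rotation or reflection of the cycle $a_1\cdots a_{2m}$, a swap of the two components, and the single reversals. Any of these preserves the structural conditions we test: both words have the same cyclic label sequence up to reversal, and the signs alternate in a complementary way. So it suffices to check that every paragraph satisfying these conditions is congruent to $\BG_0$, and this follows by relabelling $\Gamma_1$ as $a_1\cdots a_{2m}$.
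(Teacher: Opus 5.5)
Your proposal is correct and follows essentially the same route as the paper. For (i) it is the same one-scan count of shared vertices. For (ii) it rejects odd $n$, checks that every vertex occurs once in each word, checks that $\Gamma_1$ is alternating, and tests the two cyclic alignments of $\Gamma_2$ (forward and reversed) fixed by one label, which is exactly the normal form of Corollary~\ref{cor:torus2}. Your separate connectivity test and your check that the signs in $\Gamma_2$ are complementary are redundant: both follow automatically once every vertex occurs exactly once in each word, since each vertex occurs once with each sign in the paragraph.
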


Finally, for shadows of $3$-component links we only need the following.

\begin{lemma}\label{lem:test3link}
Given the Gauss paragraph of a $3$-component link shadow $S$ on $n$ vertices, one can answer in $O(n)$ time the following query. Does every pair of component words share at least one vertex?
\end{lemma}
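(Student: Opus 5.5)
We need to plan the proof of Lemma~\ref{lem:test3link}: given the Gauss paragraph $\BG=\{\Gamma_1,\Gamma_2,\Gamma_3\}$ of a $3$-component link shadow on $n$ vertices, decide in $O(n)$ time whether every pair of component words shares at least one vertex.

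The approach is a single linear scan that records, for every vertex, the set of words in which it occurs. We first allocate an array $\mathrm{occ}[1..n]$, indexed by the vertex labels $1,\ldots,n$, with each entry initialized to the empty set; a set of indices from $\{1,2,3\}$ can be stored as a $3$-bit mask. We then traverse $\Gamma_1$, $\Gamma_2$, $\Gamma_3$ in turn, and for each occurrence of a vertex $v$ in $\Gamma_i$ we add $i$ to $\mathrm{occ}[v]$. Since the paragraph has exactly $2n$ symbols in total, this pass takes $O(n)$ time.

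Because each vertex occurs exactly twice in $\BG$, after the pass each $\mathrm{occ}[v]$ is either a singleton $\{i\}$, meaning $v$ is a self-crossing of $W_i$, or a pair $\{i,j\}$ with $i\neq j$, meaning $v$ is shared by $W_i$ and $W_j$. In a second pass over $v=1,\ldots,n$ we set three Boolean flags $f_{12}$, $f_{13}$, $f_{23}$, raising $f_{ij}$ whenever $\mathrm{occ}[v]=\{i,j\}$. The answer to the query is yes exactly when all three flags are raised at the end. This pass also takes $O(n)$ time.

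The only point to check is that the answer does not depend on the particular Gauss paragraph we are given. A shared vertex between $\Gamma_i$ and $\Gamma_j$ is a crossing between the components $W_i$ and $W_j$, and this is unaffected by the choice of basepoints, by traversal directions, and by relabelling; it is likewise independent of the signs. So the query is well defined for $S$. There is no real obstacle here; the only assumption is the standing one that vertex labels lie in $\{1,\ldots,n\}$, which makes the direct array indexing possible.
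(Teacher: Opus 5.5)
Your proposal is correct and follows essentially the same argument as the paper. Both do one linear scan that records, for each vertex, the set of component words containing it, and then check that each of the three pairs of words is witnessed by some shared vertex. Your remarks on well-definedness (independence from basepoints, directions and signs) and on array indexing by labels $1,\ldots,n$ are harmless extra detail.
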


%%%%%%%%%%%%%%%%%%%%%%%%%%%%%%%%%%%%%%%%%%%%%%%%%%%%%%%%%%%%%%%%%%%%%%%%%%5
%%%%%%%%%%%%%%%%%%%%%%%%%%%%%%%%%%%%%%%%%%%%%%%%%%%%%%%%%%%%%%%%%%%%%%%%%%5
%%%%%%%%%%%%%%%%%%%%%%%%%%%%%%%%%%%%%%%%%%%%%%%%%%%%%%%%%%%%%%%%%%%%%%%%%%%

\section{Proofs of Theorems~\ref{thm:knots} and~\ref{thm:links}}\label{sec:mainproofs}

We start by recalling the characterizations that are at the heart of the proofs of Theorems~\ref{thm:knots} and~\ref{thm:links}. The first four involve knot shadows, and are due to Taniyama~\cite{taniyamaknots}. The fifth one was established more recently by Takimura~\cite{takimura}.

The statements below are convenient (for our purposes) reformulations of Taniyama's results, obtained from the statements and proofs of the corresponding theorems.

\subsection{The characterizations for the knots $3_1, 4_1, 5_1, 5_2$, and $6_2$}

We emphasize that Lemmas~\ref{lem:31}--\ref{lem:52} were not stated in this form in~\cite{taniyamaknots}, but it is straightforward to derive these statements from the respective theorems (and their proofs) in~\cite{taniyamaknots}. Moreover, these four lemmas are also stated in essentially this form in the review of Taniyama's work given by Takimura in~\cite[Theorem 1]{takimura}.

\begin{lemma}[Follows from {\cite[Theorem 1]{taniyamaknots}}]\label{lem:31} 
Every prime knot shadow resolves into the trefoil knot $3_1$.
\end{lemma}

\begin{lemma}[Follows from {\cite[Theorem 2]{taniyamaknots}}]\label{lem:41}
A prime knot shadow with $p \ge 4$ vertices resolves into the figure-eight knot $4_1$ if and only if it is not equivalent to $T(2,p)$.
\end{lemma}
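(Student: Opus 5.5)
The plan is to derive Lemma~\ref{lem:41} directly from Taniyama's Theorem~2 in~\cite{taniyamaknots} and its proof, which treats the figure-eight knot. There are two directions, and the hard one is already contained in Taniyama's argument.

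\emph{Necessity.} I first show that $T(2,p)$ never resolves into $4_1$. Every resolution of the standard $(2,p)$-torus shadow is a diagram of a $2$-braid closure $\widehat{\sigma_1^{\epsilon_1}\cdots\sigma_1^{\epsilon_p}}$, with each $\epsilon_i\in\{\pm1\}$. Because $\sigma_1\sigma_1^{-1}$ cancels and the closure is invariant under cyclic rotation, such a closure is isotopic to the closure of $\sigma_1^{k}$, where $k=\sum_i\epsilon_i$. That closure is the torus knot or link $T(2,k)$, which for $p$ odd is a knot with $k$ odd. The figure-eight knot is not a $(2,k)$-torus knot: for instance its determinant is $5$ while that of $T(2,k)$ is $|k|$, and $|k|=5$ gives $5_1$, whose Alexander polynomial differs from that of $4_1$. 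Since equivalent shadows have identical resolution properties, no shadow equivalent to $T(2,p)$ resolves into $4_1$.

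\emph{Sufficiency.} This is the substantive part, and I would import it from Taniyama. He shows that a knot shadow $S$ that is not a connected sum of torus-type pieces contains, after suitable smoothing-free ``resolutions to trivial parts'', a configuration whose resolutions realize $4_1$. Concretely, the route I would follow is this:
\begin{enumerate}
\item[(a)] A prime knot shadow with $p\ge 4$ vertices that is not $T(2,p)$ contains a sub-configuration combinatorially equivalent to the standard $4_1$ shadow pattern, in the sense of a Gauss subword $1\,2\,3\,1\,4\,3\,2\,4$ up to congruence with appropriate signs.
\item[(b)] One can resolve the remaining crossings so that the parts of $S$ outside this configuration become descending or ascending arcs. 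They can then be removed by Reidemeister moves, leaving a diagram of $4_1$.
\end{enumerate}
Step~(b) is the standard technique: choose the over/under information so that each excess strand lies monotonically above or below everything else.

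\emph{Main obstacle.} Step~(a) is the combinatorial heart, and I would rely on Taniyama's proof rather than reprove it. If I had to reprove it, I would argue by induction on $p$. The base case compares prime shadows with $4$ vertices, using $3$-edge-connectivity via Observation~\ref{obs:equivtoprime}. For the inductive step, I would find a crossing whose smoothing or removal keeps the shadow prime and not of torus type, unless $S$ is very close to $T(2,p)$. In that near-torus case, I would treat the situation directly through the chain structure of parallel edges.
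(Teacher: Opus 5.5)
\textbf{The gap is in sufficiency: step (a) is false.} Your necessity argument is correct. Every resolution of $T(2,p)$ is the closure of the $2$-braid $\sigma_1^{k}$ with $k$ odd, and $4_1$ is not a $(2,k)$-torus knot. Because (a) is false, it cannot be what Taniyama's proof provides.

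\emph{Counterexample.} Take the $5_2$ shadow, i.e.\ the pretzel shadow $S(3,1,1)$: a chain $a_1a_2a_3$ and a chain $b_1c_1$, with their ends joined crosswise.
\begin{enumerate}
\item[(1)] It is prime, has $5$ vertices, and is not $T(2,5)$.
\item[(2)] It resolves into $4_1$. The resolution $P(-3,1,1)$ is a $2$-bridge knot with determinant $|(-3)\cdot 1+1\cdot 1+1\cdot(-3)|=5$ and fraction $-\tfrac13+2=\tfrac53$, i.e.\ $b(5,3)\cong 4_1$.
\item[(3)] Up to signs its Gauss code is $a_1a_2a_3\,b_1c_1\,a_3a_2a_1\,b_1c_1$. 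The chords $b_1,c_1$ cross each other and every $a_i$, while the $a_i$ are pairwise nested. So any four chords cross in the pattern of a diamond (for $\{a_i,a_j,b_1,c_1\}$) or a claw (for $\{a_1,a_2,a_3,b_1\}$ or $\{a_1,a_2,a_3,c_1\}$). The chords of $1\,2\,3\,1\,4\,3\,2\,4$ instead cross in a $4$-cycle.
\end{enumerate}
Cyclic translation, reversal, relabelling and signs do not change which chords cross. Hence $S(3,1,1)$ contains no Gauss subword congruent to the $4_1$ code, with any choice of signs.

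So the mechanism ``locate a $4_1$ pattern and trivialize the remaining crossings'' cannot be how sufficiency works. In $S(3,1,1)$ the $4_1$ appears because the three twist crossings, resolved against the clasp, combine into a different rational tangle. It does not appear because four of the crossings already form a $4_1$ pattern. Your fallback induction via smoothing has a similar problem: smoothing a crossing changes the shadow, and you give no way to lift a $4_1$ resolution of the smaller shadow back to $S$.

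\textbf{How to fix it.} Import the \emph{statement} of Taniyama's Theorem~2 rather than a reconstruction of its proof. In essence, it says a knot projection fails to resolve into $4_1$ only if it is a connected sum of standard $(2,p)$-torus projections. Then note that a prime shadow can be such a connected sum only if it is a single $T(2,p)$. That specialization is all the paper does.
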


We note that since in a knot torus shadow $T(2,p)$ the parameter $p$ is necessarily odd, Lemma~\ref{lem:41} in particular implies that {\em any} prime knot shadow with an even number $p\ge 4$ of vertices {\em always} resolves into $4_1$. 

\begin{lemma}[Follows from {\cite[Theorem 4]{taniyamaknots}}]\label{lem:51}
A prime knot shadow resolves into $5_1$ if and only if it is equivalent to neither $S(p_1,p_2,p_3)$, for any odd positive integers $p_1,p_2,p_3$, nor to $N(2s,2t)$, for any positive integers $s,t$. 
\end{lemma}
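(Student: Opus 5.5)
The plan is to derive Lemma~\ref{lem:51} directly from \cite[Theorem 4]{taniyamaknots} and its proof. No new topology is needed. The work is translating Taniyama's hypotheses and exceptional classes into the language of prime shadows used here.

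First I will recall the precise form of Taniyama's Theorem 4. As I remember it, a knot projection $P$ is a projection of $5_1$ (that is, $P \ge 5_1$ in his partial order) if and only if $P$ does not belong to a certain list of exceptional projection types. In his setting a projection may be non-reduced or composite, and the exceptional types are described through their prime (reduced) factors. The one-vertex torus shadow $T(2,1)$, i.e.\ a nugatory crossing, is the standard source of non-reducedness.

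The first step is to specialise to prime shadows using Observation~\ref{obs:equivtoprime}. A prime knot shadow is connected, $3$-edge-connected and has at least two vertices. So it has no nugatory vertex, since such a vertex gives a $2$-edge cut, and it is not a nontrivial connected sum. Every clause of Taniyama's list that involves composite or non-reduced projections is therefore vacuous. Moreover, a prime shadow has itself as its unique prime factor, so the exceptional condition becomes a condition on $S$ alone.

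The second step is to match the remaining exceptional classes with the families of Section~\ref{sec:theshadows}. Taniyama's two basic exceptional types are the ``pretzel'' projections with three odd twist regions and the projections built from two even twist regions joined as in Figure~\ref{fig:nuts}. These should correspond exactly to $S(p_1,p_2,p_3)$ with $p_i$ odd and to $N(2s,2t)$. The torus shadows $T(2,p)$ with $p$ odd are already covered, being equivalent to pretzel shadows; for instance $T(2,p)$ should be $S(p-2,1,1)$ up to equivalence, and I will check this against the definitions. I will also check that all of these families are themselves prime, so the statement carries no vacuous exceptions.

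For the ``only if'' direction I would rely on Taniyama's proof. As a sanity check, every resolution of $S(p_1,p_2,p_3)$ is a diagram of a pretzel knot $P(q_1,q_2,q_3)$ with $q_i$ odd and $|q_i|\le p_i$. Every resolution of $N(2s,2t)$ is a two-bridge diagram with two even twist regions, and such knots are genus one (or trivial). Neither kind can equal $5_1$, which has genus two and is not such a pretzel knot.

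The main obstacle will be the bookkeeping in the second step. I must confirm that Taniyama's figures, up to (possibly orientation-reversing) homeomorphism of $\sphere$, coincide exactly with the families defined here, including degenerate parameter values such as some $p_i=1$ or $s=t=1$, and that nothing is lost or added. To check this I would compare Gauss codes using Proposition~\ref{pro:carter}.
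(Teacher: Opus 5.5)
Your overall route is the paper's. The paper gives no independent argument: it reads the lemma off Taniyama's Theorem~4 restricted to prime shadows, noting that Takimura's Theorem~1 restates it in this form. Your reduction to prime shadows and your genus check are sound. In fact the genus argument is already a complete proof that the excluded shadows do not resolve into $5_1$: odd pretzel knots and the two-bridge knots with two even twist regions have genus at most one, while $5_1$ has genus two. Taniyama is therefore needed only for the converse, that every other prime shadow resolves into $5_1$.

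There is, however, a concrete error in your second step. You claim that the torus shadows $T(2,p)$, $p$ odd, are ``already covered'' because $T(2,p)$ is equivalent to $S(p-2,1,1)$. This is false for every $p\ge5$, and it has to be false.
\begin{itemize}
\item[(a)] $T(2,5)$ is the shadow of the standard diagram of $5_1$. More generally, $T(2,p)$ with $p\ge 5$ odd resolves into $5_1$: resolve $(p+5)/2$ crossings one way and $(p-5)/2$ the other, which gives the closure of the $2$-braid $\sigma_1^5$. If $T(2,p)$ were a pretzel shadow, the lemma itself would be contradicted.
\item[(b)] Directly: in $S(p-2,1,1)$ the vertex $a_1$ is joined to each of $b_1$ and $c_1$ by a single edge. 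In $T(2,p)$ any two adjacent vertices are joined by two parallel edges.
\item[(c)] Equivalently, in the pretzel normal form of $S(p-2,1,1)$ the two occurrences of $a_1$ are at cyclic distance $3$. In every Gauss code of $T(2,p)$ the two occurrences of each label are at distance $p$, and congruence preserves these distances.
\end{itemize}

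The only coincidence is $T(2,3)=S(1,1,1)$. Note that $S(3,1,1)$ is the shadow of the standard $5_2$ diagram, not of $5_1$, and Lemma~\ref{lem:62} lists $T(2,p)$ separately from the pretzel shadows.

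So torus shadows simply do not belong to the $5_1$ exception list; they appear only in the lists for $4_1$, $5_2$ and $6_2$. You should drop this step rather than try to verify it. If your recollection of Taniyama's Theorem~4 includes torus projections among the exceptions, correct it before doing the bookkeeping in your last paragraph. The matching must be against exactly two types: the three-column pretzel projections with odd columns, giving $S(p_1,p_2,p_3)$, and the projections with two even twist regions, giving $N(2s,2t)$.
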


\begin{lemma}[Follows from {\cite[Theorem 3]{taniyamaknots}}]\label{lem:52}
A prime knot shadow resolves into $5_2$ if and only if it is equivalent to neither $T(2,p)$, for any odd $p\ge 3$, nor to $N(2,2)$.
\end{lemma}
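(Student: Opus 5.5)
The plan is to derive the statement directly from Taniyama's Theorem~3 in~\cite{taniyamaknots}. I would argue the two directions separately and handle the translation of terminology at the outset.

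\emph{Translating the setting.} Taniyama works with knot projections and the relation ``$P \ge K$'', meaning that some resolution of $P$ is a diagram of $K$. His hypotheses are phrased as the projection being reduced and not a nontrivial composite. By Observation~\ref{obs:equivtoprime}, a prime shadow in our sense is $3$-edge-connected. It therefore has no nugatory vertex (a loop at a vertex yields a $2$-edge cut) and is not a connected sum. Hence it meets Taniyama's hypotheses, and his projection-equivalence agrees with ours via Proposition~\ref{pro:carter}. This step is bookkeeping, but I would carry it out explicitly so that the excluded list in his theorem can be matched, up to equivalence, with our families $T(2,p)$ and $N(2,2)$.

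\emph{``If'' direction.} Let $S$ be a prime knot shadow equivalent to no $T(2,p)$ with $p \ge 3$ odd and not to $N(2,2)$. I would invoke Taniyama's Theorem~3, which says that every such reduced prime projection outside the exceptional list resolves into $5_2$.

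The step I expect to need the most care is confirming that Taniyama's exceptional list is exactly $\{T(2,p)\} \cup \{N(2,2)\}$ and not something larger. In particular, I must check the small cases by hand. A prime knot shadow has at least two vertices. There is no prime knot shadow on exactly two vertices, since $T(2,2)$ is a $2$-component shadow. The only prime knot shadow on three vertices is $T(2,3)$. On four vertices, $T(2,4)$ is a link shadow, and the only prime knot shadow is the standard figure-eight shadow, which is precisely $N(2,2)$. All of these are excluded, so Taniyama's theorem only needs to be applied to shadows with at least five vertices. If his statement instead lists the exceptions as ``$(2,p)$-torus projections and the standard $4_1$ projection'', I would identify the latter with $N(2,2)$ by writing down both Gauss codes and applying Proposition~\ref{pro:carter}.

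\emph{``Only if'' direction.} This direction is self-contained. $N(2,2)$ has four vertices, while $5_2$ has crossing number $5$, so no resolution of $N(2,2)$ is a diagram of $5_2$. For $T(2,p)$ with $p$ odd, every resolution is the closure of a $2$-braid $\sigma_1^{\pm1}\cdots\sigma_1^{\pm1}$. This closure is isotopic to the closure of $\sigma_1^{q}$, where $q$ is the signed count of crossings and $|q| \le p$ is odd. It is therefore the unknot or a $(2,q)$-torus knot. Since $5_2$ is not a torus knot (its determinant is $7$, whereas the $(2,5)$-torus knot $5_1$ has determinant $5$, and $5$ is the only candidate with crossing number $5$), $T(2,p)$ does not resolve into $5_2$.
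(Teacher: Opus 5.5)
Your proposal is correct and follows essentially the same route as the paper, which gives no argument beyond deriving this lemma from Taniyama's Theorem~3. Your extra bookkeeping is accurate: the check that the only prime knot shadows on three and four vertices are $T(2,3)$ and $N(2,2)$, and the self-contained proof of the ``only if'' direction via $2$-braid closures and the crossing number and determinant of $(2,q)$-torus knots. It simply makes explicit what the paper leaves to the citation.
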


\begin{lemma}[{\cite[Theorem 2]{takimura}}]\label{lem:62} 
A prime knot shadow fails to resolve into $6_2$ if and only if it is equivalent to one of the following: $T(2,p)$ for some odd $p\ge 3$; $S(p_1,p_2,p_3)$ for some positive odd integers $p_1,p_2,p_3$; $N(2s,2t)$ for some positive integers $s,t$; $S(6_3)$; or $S(8_{18})$.
\end{lemma}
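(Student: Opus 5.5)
The lemma has two directions, and I would handle them separately.

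\emph{Sufficiency: the listed shadows do not resolve into $6_2$.}

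For $T(2,p)$, any resolution has runs of consecutive crossings of equal sign. Adjacent crossings of opposite sign cancel by Reidemeister II moves, so every resolution is a $(2,q)$-torus knot with $q$ odd, or the unknot. None of these is $6_2$.

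For $S(p_1,p_2,p_3)$, the same cancellation within each chain shows that every resolution is a pretzel knot $P(q_1,q_2,q_3)$ with $q_i$ odd. A degenerate case occurs when some $q_i=0$ after cancellation, and then the resolution is a two-bridge torus-type knot or the unknot.

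For $N(2s,2t)$, every resolution is a two-bridge knot with continued fraction $C(2a,2b)$, where $a,b$ are integers, or a degenerate version of it.

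To exclude $6_2$ from these families I would use the Schubert classification of two-bridge knots. The knot $6_2$ is the two-bridge knot $C(3,1,2)$, with fraction $11/3$ up to the usual equivalences. A two-bridge knot $C(2a,2b)$ has determinant $|4ab\pm1|$. This already rules out most cases, and a direct comparison of fractions handles the rest. For odd pretzels $P(q_1,q_2,q_3)$ that happen to have determinant $11$, I would check that any of them that is two-bridge (which forces some $|q_i|=1$) has a fraction inequivalent to $11/3$. Any remaining case I would settle with the Alexander or Jones polynomial.

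For $S(6_3)$ and $S(8_{18})$ the check is finite: at most $2^8$ resolutions, up to symmetry. I would verify them by computing an invariant, for example the Jones polynomial.

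\emph{Necessity: every other prime shadow resolves into $6_2$.}

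I would follow Taniyama's approach for $5_1$ and $5_2$ (Lemmas~\ref{lem:51} and~\ref{lem:52}) and argue by induction on $|S|$. The basic tool is a \emph{reduction} of a prime shadow $S$ to a smaller shadow $S'$ such that any resolution of $S'$ extends to a resolution of $S$ representing the same knot. Concretely:
\begin{itemize}
\item A bigon can be resolved as a cancelling Reidemeister II pair.
\item A $2$-tangle whose sub-arc can be made descending can be collapsed, because the descending part is unknotted.
\end{itemize}
In particular, if $S$ contains a long twist chain, it can be shortened by two crossings without changing which knots $S$ resolves into (apart from small cases). So if $S$ is not in the exceptional list, I would show that it reduces to a smaller prime shadow that is also not in the exceptional list, or else that $S$ is one of finitely many \emph{minimal} prime shadows. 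The minimal ones have bounded size, roughly at most $8$ vertices, and I would check each by direct enumeration. Lemma~\ref{lem:52} and Lemma~\ref{lem:51} help organize the cases: a prime shadow outside the exceptional families resolves into both $5_1$ and $5_2$. That gives structural information about where twist regions sit and how they can be modified to produce $6_2$.

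\emph{Main obstacle.}

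I expect the hardest step to be showing that the reductions never land inside the exceptional families unless $S$ itself was exceptional. For example, shortening a chain in a shadow close to $S(6_3)$ or $S(8_{18})$ might produce a pretzel or nutcracker shadow. Handling this requires a case analysis of the neighbours of each exceptional shadow under inverse reductions, that is, adding a bigon or inserting a crossing. For each such neighbour I would exhibit an explicit resolution into $6_2$. This is exactly the analysis carried out in~\cite{takimura}, and in the paper I would ultimately cite that work for the full case check.
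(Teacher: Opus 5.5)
\paragraph{Comparison with the paper.}
The paper does not prove this lemma. It quotes it from Takimura's Theorem~2, and later uses only the list of exceptions. On necessity, then, your proposal ends exactly where the paper does, by deferring to Takimura. Your sufficiency argument is an addition the paper does not attempt, and it is sound in outline.

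\paragraph{Sufficiency: one correction and a shortcut.}
Each $p_i$ is odd, so each twist total $q_i$ is odd and hence nonzero. The ``degenerate case $q_i=0$'' therefore never arises.

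You can also drop the determinant, fraction and bridge-number bookkeeping. An odd three-strand pretzel knot bounds the evident genus-one surface formed by two disks joined by three odd-twisted bands. A double twist knot $C(2a,2b)$ bounds a plumbing of two twisted annuli. Both therefore have genus at most $1$, whereas $g(6_2)=2$. A $(2,q)$-torus knot of determinant $11$ would be $T(2,\pm 11)$, which has genus $5$. The checks for $S(6_3)$ and $S(8_{18})$ are indeed finite computations.

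\paragraph{Necessity: the sketch is not an argument on its own.}
If you meant the inductive sketch as an actual proof, it does not go through as stated.

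First, shortening a twist chain does not preserve the set of knots a shadow resolves into; it can only shrink it. For example, $T(2,7)$ resolves into $7_1$ but $T(2,5)$ does not. The induction only needs this inclusion, so that slip is harmless.

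The real gap is that bigon cancellation and chain shortening never apply to bigon-free prime shadows, and there are infinitely many of these. Take the standard shadow of $10_{123}$, the closure of $(\sigma_1\sigma_2^{-1})^5$. It is prime, its faces are two pentagons and ten triangles, and it is not on the exceptional list, so by the lemma it must resolve into $6_2$. Yet its four-fold analogue, the closure of $(\sigma_1\sigma_2^{-1})^4$, has shadow $S(8_{18})$, which is exceptional. Your tangle-collapsing reduction is not specified precisely enough to show that it turns such a shadow into a smaller prime non-exceptional one.

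So the claim that the minimal cases have at most about $8$ vertices does not follow from your reductions; it is essentially the whole content of Takimura's theorem. Citing that theorem, as the paper does, is what actually closes the proof.
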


\subsection{The characterizations for the links $L2a1, L4a1, L5a1$, and $L6n1$}

The next three lemmas are also due to Taniyama, this time for two-component links. As above, we emphasize that Lemmas~\ref{lem:L2a1}--\ref{lem:L5a1} were not stated in this form in~\cite{taniyamalinks}, but it is straightforward to derive these statements from the respective theorems in~\cite{taniyamalinks}.

The first two characterizations for link shadows do not involve any infinite families:

\begin{lemma}[Follows from {\cite[Theorem 1]{taniyamalinks}}]\label{lem:L2a1} 
A two-component link shadow resolves into $L2a1$ if and only if it is connected. 
\end{lemma}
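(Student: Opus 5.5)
The plan is to prove the two implications separately. The only facts needed are how linking numbers behave in a resolution and the standard crossing-change construction.

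\emph{Necessity.} Suppose $S$ resolves into $L2a1$, the Hopf link. If $S$ were disconnected, its two components $W_1,W_2$ would share no vertex. Then $W_1$ and $W_2$ lie in disjoint regions of $\sphere$; more precisely, each component of $S$, as a connected planar graph, misses some face of the other, which contains it entirely. Hence there is a simple closed curve in $\sphere$ separating $W_1$ from $W_2$. The corresponding $2$-sphere in ${\mathbb S}^3$ separates the two link components in every resolution, so every resolution is a split link. The Hopf link is not split: its linking number is $\pm1\neq 0$. This contradiction shows that $S$ must be connected.

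\emph{Sufficiency.} Suppose $S$ is connected. Then $W_1$ and $W_2$ share at least one vertex, and the number of shared vertices is even, since two closed curves on $\sphere$ in general position cross an even number of times. We resolve $S$ as follows.
\begin{enumerate}
\item[(1)] Fix a basepoint on each component. Resolve every self-crossing of $W_i$ in descending fashion: traversing from the basepoint, the first passage is over. This makes each component individually an unknot. Moreover, each component can be lifted so that it lies in a horizontal slab, with the two slabs stacked at different heights, except near the mixed crossings.
\item[(2)] Let $2m\ge 2$ be the number of mixed crossings. Choose signs at the mixed crossings so that exactly $m+1$ of them contribute $+1$ to the linking number and $m-1$ contribute $-1$, giving linking number $1$. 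Each mixed crossing has both choices available, so this is always possible.
\end{enumerate}

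The \textbf{main obstacle} is to show that the resulting link is exactly the Hopf link, and not merely a link with linking number $1$. My first attempt is the following. Start from the configuration with $W_1$ entirely above $W_2$. That link is split and trivial, since the two components lie at different heights and each is a descending unknot.

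Now change a single mixed crossing $v$. The resulting link is obtained from the trivial split link by a crossing change between the components. Equivalently, $W_2$ passes through $W_1$ once along a small arc. Isotope $W_1$ (an unknot bounding a disk $D$ at its height) and $W_2$ (an unknot below). After the change, $W_2$ meets $D$ in exactly one point. Two unknots such that one meets a spanning disk of the other transversely in exactly one point form the Hopf link: shrink $D$ together with $W_1$ to a small meridian circle of $W_2$.

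So the construction should be: resolve $W_1$ entirely over $W_2$ at every mixed crossing except exactly one. This is possible precisely because $S$ is connected, so there is at least one mixed crossing. The self-crossings are resolved descending as in step (1). The proof then reduces to making the disk argument above rigorous in the piecewise linear setting. This is routine: put $W_1$ in the plane $z=1$ and $W_2$ in $z=0$, except for a small vertical excursion of $W_2$ up through $z=1$ at $v$, which pierces the disk of $W_1$ once.
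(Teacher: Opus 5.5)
Your necessity argument is fine. Your final construction also really does produce the Hopf link: self-crossings resolved descending, and $W_1$ over $W_2$ at every mixed crossing except one vertex $v$. Step (2) is superseded by it and can be dropped.

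The gap is in the step you yourself flag as the main obstacle. The claim that after the change $W_2$ meets a spanning disk $D$ of $W_1$ in exactly one point is asserted, not proved. The ``routine'' realization you suggest is not available, because $W_1$ generally has self-crossings and so cannot lie in the plane $z=1$ while projecting onto $S$. Two natural repairs both fail:
\begin{itemize}
\item If you first isotope $W_1$ to a flat circle, the crossing change at $v$ is carried along to a finger move along an uncontrolled arc. Nothing then guarantees a single intersection.
\item If you keep $W_1$ in its slab with a spanning disk there (e.g.\ the disk ruled by horizontal segments from $W_1$ to the vertical line over its basepoint), the excursion of $W_2$ must climb to just above the height of $W_1$ over $v$. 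On the way it can pass through sheets of $D$ coming from later, lower, portions of $W_1$, several times.
\end{itemize}
Only the algebraic count $\pm1$ is forced. That count is just the linking number, which you already know does not determine the link.

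One way to close the gap is to flatten $W_2$ rather than $W_1$.
\begin{enumerate}
\item Let $\ell$ be a short segment of the projection of $W_2$ through $v$ containing no other vertex. Then the projection of $W_1$ meets $\ell$ only at $v$.
\item Put $W_1$ (descending) in the slab $2\le z\le 3$ and $W_2$ (descending) in $0\le z\le 1$. Let $h$ be the height of $W_1$ over $v$.
\item Realize the crossing change by replacing the sub-arc of $W_2$ over $\ell$ with an arc that rises vertically over one endpoint of $\ell$ to height $h+\delta$, runs over $\ell$ at that height, and descends over the other endpoint.
\item Let $\beta$ be the part of the new $W_2$ lying in $z\le 3/2$, and let $\tau$ be the segment over $\ell$ at height $3/2$. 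Then $\beta\cup\tau$ is isotopic to the old $W_2$, across the vertical strip over $\ell$, which meets nothing else. Hence $\beta\cup\tau$ is unknotted.
\item So $\beta$ is boundary-parallel in the half-space $z\le 3/2$. This is explicit if you place the basepoint of $W_2$ over $\ell$: then $\beta$ is monotone and horizontal rulings give the disk. Push $\beta$ onto $\tau$; this never touches $W_1$ or the rest of $W_2$.
\item Now $W_2$ bounds the vertical rectangle over $\ell$ between heights $3/2$ and $h+\delta$. $W_1$ meets this rectangle in exactly one point, the one over $v$.
\end{enumerate}
This is exactly your disk argument with the roles of the components exchanged, and it yields the Hopf link.

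For comparison, the paper gives no argument for this lemma and simply cites Taniyama's Theorem~1. With this repair, your proof would be a self-contained alternative.
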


\begin{lemma}[Follows from {\cite[Theorem 2]{taniyamalinks}}]\label{lem:L4a1} 
A two-component link shadow, whose two components we denote $S_1$ and $S_2$, resolves into $L4a1$ if and only if $S_1$ and $S_2$ share at least $4$ vertices. 
\end{lemma}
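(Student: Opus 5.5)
The plan is to derive both directions from the linking number together with the construction behind \cite[Theorem 2]{taniyamalinks}, treating the sufficiency direction as the substantial part.

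\emph{Necessity.} Suppose some resolution $D$ of $S$ is a diagram of $L4a1$, the $(2,4)$-torus link. Orient both components. The linking number of $D$ is half the sum of the crossing signs over the vertices shared by $S_1$ and $S_2$. Self-crossings of either component do not contribute. Every orientation of $L4a1$ has linking number $\pm 2$, so the absolute value of this half-sum must be $2$. Hence at least $4$ vertices are shared. (The number of shared vertices is always even, by the Jordan curve theorem.)

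\emph{Sufficiency.} Suppose $S_1$ and $S_2$ share $2k\ge 4$ vertices. First I resolve every self-crossing of $S_i$ so that $S_i$ is descending from a chosen basepoint. Then each component, taken alone, is a trivial knot lying in a thin slab of heights. If every mixed crossing is resolved with $S_1$ over $S_2$, the result is the split trivial link.

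Next I use the freedom at the $2k$ mixed crossings. Following Taniyama, the idea is to locate four mixed crossings $c_1,c_2,c_3,c_4$ whose cyclic orders along $S_1$ and along $S_2$ are compatible. I resolve these with alternating over/under and matching signs, so they contribute linking number $2$. Every other mixed crossing is resolved so that it can be eliminated: by pushing an arc of one component entirely over the other using Reidemeister II and III moves, helped by the descending structure. The resulting diagram should then be isotopic to the standard four-crossing diagram of $L4a1$.

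\emph{Main obstacle.} The hard step is choosing the four crossings and the remaining resolutions so that the isotopy type is exactly $L4a1$, rather than merely a link with linking number $2$ (for example $L4a1$ connected-summed with something, or a different link). The descending choice controls self-crossings, but the interleaving of the mixed crossings along the two components has to be handled. I would first try an induction on $k$. Pick two mixed crossings that are consecutive along $S_2$, and resolve them oppositely so that their signs cancel and they can be removed by a Reidemeister II move after an isotopy of the descending arc. This reduces to the case $k=2$, which is then checked directly by enumerating the few possible interleavings of four crossings. Failing a self-contained argument, I would simply invoke \cite[Theorem 2]{taniyamalinks} at this step, since that is exactly what it provides.
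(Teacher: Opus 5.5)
The paper gives no argument of its own for this lemma: both directions are simply read off from \cite[Theorem 2]{taniyamalinks}. Your ``only if'' direction is a correct, self-contained substitute for that citation. $L4a1$ has $|\mathrm{lk}|=2$ under every orientation, and $\mathrm{lk}$ is half the signed count of the shared vertices, so at least four shared vertices are needed. For the ``if'' direction, your fallback of invoking Taniyama's theorem is exactly the paper's route, and with it the proposal is complete. Be aware, though, that the self-contained induction you sketch does not work as stated, so the citation is carrying that entire direction.

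First, the pair-cancellation step can fail. Two mixed crossings $x,y$ that are consecutive along $S_2$ are joined by an arc of $S_2$ lying in one face of $S_1$. Nothing forces that arc to cobound a bigon with $S_1$. Moreover, when $S_1$ has self-crossings, putting the same strand over at both $x$ and $y$ can force equal signs. For example, let $S_1$ be a figure-eight curve, and let $S_2$ exit one lobe at $x$ and immediately enter the other lobe at $y$. The two lobes are traversed in opposite rotational senses, so $S_2$ crosses $S_1$ in the same direction (relative to the orientation of $S_1$) at $x$ and at $y$. Hence whenever the same strand is over at both, the two crossings have equal signs. For this pair you cannot simultaneously make the signs cancel and make the crossings removable by a Reidemeister~II move. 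Even when the signs can be made opposite, sweeping the arc off $S_1$ may require passing it across other strands whose crossings with it are already fixed by your descending choice.

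Second, the base case $k=2$ is not a finite check. Shadows with exactly four shared vertices form an infinite family, because the self-crossings of $S_1$ and $S_2$ are unrestricted. So the interleaving of the four mixed crossings does not by itself determine the link type of a resolution.

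A self-contained proof of sufficiency needs a genuinely global choice of resolution, and that is what the cited theorem provides.
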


The characterization for those link shadows that resolve into the Whitehead link $L5a1$ involves the family of (link) torus shadows introduced above.

\begin{lemma}[Follows from {\cite[Theorem 3]{taniyamalinks}}]\label{lem:L5a1} 
A two-component prime link shadow resolves into $L5a1$ if and only if it is not equivalent to $T(2,p)$ for any even $p \ge 2$.
\end{lemma}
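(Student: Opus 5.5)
The statement is attributed to Taniyama, so the plan is to extract it from the statement and proof of \cite[Theorem 3]{taniyamalinks}. The lemma has two directions, and I would treat them separately.

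\emph{The $T(2,p)$ shadows do not resolve into $L5a1$.} This direction is a direct check. A resolution of $T(2,p)$ with $p$ even is a closed $2$-braid $\sigma_1^{\varepsilon_1}\cdots\sigma_1^{\varepsilon_p}$. After cancelling adjacent $\sigma_1\sigma_1^{-1}$ pairs (Reidemeister II moves), it becomes $\sigma_1^{k}$ with $k$ even, which is either the trivial two-component link ($k=0$) or the torus link $T(2,k)$. Such a link has two unknotted components with linking number $k/2$. The Whitehead link has linking number $0$ but is not split, since its Conway polynomial is nonzero. The only link in the family with linking number $0$ is the trivial link, so no resolution is isotopic to $L5a1$.

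\emph{Every other prime $2$-component shadow resolves into $L5a1$.} Let $S$ be a prime $2$-component shadow not equivalent to any $T(2,p)$. I would first pass to a minor. Taniyama's argument shows that smoothing crossings, and removing crossings by choosing resolutions that undo them via Reidemeister I and II moves, lets one reduce $S$ to a small shadow that plainly resolves into the Whitehead link. The natural target is the standard $5$-crossing Whitehead shadow, or a short list of such base shadows.

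The structural input is as follows. Since $S$ is prime, its underlying multigraph is $3$-edge-connected by Observation~\ref{obs:equivtoprime}. The two components cross an even number $2m\ge 2$ of times. If every vertex of $S$ is a mixed crossing and the self-crossing-free structure forces the parallel-edge cycle pattern, then $S$ is $T(2,2m)$. Otherwise either some component has a self-crossing, or the mixed crossings are interleaved non-cyclically. In either case one locates a clasp together with a self-crossing twist, or two clasps arranged as in the Whitehead diagram. The remaining crossings are then resolved so that they cancel: descending or ascending orders on each component make every self-crossing not in the chosen subconfiguration removable.

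The main obstacle is this case analysis: showing that "prime and not $T(2,p)$" always yields the Whitehead configuration. Here I would rely on Taniyama's proof rather than redo it. The adaptation needed is to observe that his hypothesis matches ours. His standing assumptions (reducedness and primeness of the link shadow) correspond to our notion of prime shadow, namely no $2$-edge cut. Under that correspondence his exceptional list is exactly the even torus shadows $T(2,p)$, $p\ge2$, which gives the lemma as stated.
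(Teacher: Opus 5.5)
Your proposal is correct and follows essentially the same route as the paper, which gives no argument of its own and simply derives the lemma from Taniyama's Theorem 3. Like the paper, you rest the substantive direction (every prime $2$-component shadow not equivalent to an even $T(2,p)$ resolves into $L5a1$) on that citation; your structural sketch there is heuristic rather than a proof, so the citation carries the weight. Your direct check of the other direction is sound and is a small self-contained addition that the paper leaves to the reference. There, every resolution of $T(2,p)$ is the closure of $\sigma_1^k$ with $k$ even, hence either the unlink or a torus link with nonzero linking number, and neither is the Whitehead link.
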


Finally, we recall the characterization of the three-component link $L6n1$~\cite{ars}, the only non-alternating three-component link with six crossings in the Thistlethwaite table. If $S$ is a three-component link shadow with components $S_1, S_2, S_3$, we say that $S$ is {\em pairwise crossing} if for any distinct $i,j\in\{1,2,3\}$ the components $S_i$ and $S_j$ are not disjoint.

\begin{lemma}[{\cite{ars}}]\label{lem:L6n1} 
A three-component link shadow resolves into $L6n1$ if and only if it is pairwise crossing. 
\end{lemma}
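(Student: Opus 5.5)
The plan is to prove the two directions separately: necessity by a linking-number argument, and sufficiency by an explicit layered resolution in which exactly three inter-component crossings are switched.

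\emph{Necessity.} The link $L6n1$ (the $(3,3)$-torus link) has the property that, for suitable orientations, every pair of its components has linking number $\pm1$; in particular no two of its components are unlinked. Suppose a three-component shadow $S$ with components $S_1,S_2,S_3$ is not pairwise crossing, say $S_1$ and $S_2$ share no vertex. In any resolution $D$ of $S$, the sublink formed by the two corresponding components then has a diagram with no crossings between them. Hence its linking number is $0$, computed as half the signed count of crossings between the two components. Linking numbers of all pairs are isotopy invariants up to sign, so $D$ cannot represent $L6n1$. This direction is routine.

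\emph{Sufficiency.} Assume $S$ is pairwise crossing, and for each pair $\{i,j\}$ fix one vertex $c_{ij}$ shared by $S_i$ and $S_j$.
\begin{enumerate}
\item[(1)] Resolve every self-crossing of each $S_i$ so that $S_i$ is descending from a chosen basepoint. Then each component separately is an unknot bounding a disk lying in a thin slab.
\item[(2)] Resolve every inter-component crossing other than $c_{12},c_{13},c_{23}$ by layering: $S_1$ over $S_2$ over $S_3$.
\item[(3)] Resolve the three special crossings $c_{12},c_{13},c_{23}$ against the layering, or with the layering, according to sign choices to be fixed in step (4).
\end{enumerate}
If all three special crossings followed the layering, we would get the trivial $3$-component link. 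Switching a single inter-component crossing of a layered diagram passes one slab disk once through another, which produces a clasp. So after isotoping each disk back into its slab, the resulting link is obtained from three stacked flat unknots by introducing three clasps, one between each pair. I would show by an explicit isotopy, shrinking each disk onto a neighbourhood of the arcs near its clasps, that every such configuration is isotopic to one of a small finite list of models. This list is determined only by the three clasp signs and the cyclic order in which the clasps appear along each component. The candidates should be $L6n1$, its mirror, and the alternating link $L6a5$ (where $L6a5$ is the other six-crossing three-component link with all pairwise linking numbers $\pm1$).

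\begin{enumerate}
\item[(4)] Choose the three clasp signs so that the model is $L6n1$ rather than $L6a5$.
\end{enumerate}
The two candidates can be told apart by the pattern of signs of the pairwise linking numbers, or failing that by Milnor's triple linking number. The point is that the sign of each clasp can be flipped independently by switching which strand is over at $c_{ij}$, so all sign patterns are available. Since $L6n1$ is amphichiral up to reorientation of components (or, if not, since the mirror corresponds to negating all three signs), one admissible choice yields $L6n1$.

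I expect the main obstacle to be step (4): establishing rigorously that the stacked-disks-with-three-clasps configuration reduces to the finite model list, independent of how the rest of $S$ is drawn. The concern is that the three clasps may sit in different cyclic orders along the components, so one must check that the order does not matter. I would handle this by showing that the clasp between $S_i$ and $S_j$ can be slid along $S_i$ past the layered part of the disk freely, because every other strand lies entirely above or entirely below that disk. If this sliding argument fails in some configuration, the fallback is to choose the vertices $c_{ij}$ more carefully, for instance consecutively along a straight-ahead walk, so that only one cyclic arrangement needs to be analysed.
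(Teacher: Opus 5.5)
The paper does not prove this lemma; it imports it from~\cite{ars}, so your argument has to stand on its own. Your necessity direction is correct. The sufficiency direction has a genuine gap exactly where you place the obstacle, and the reduction you propose there is false as stated.

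Each component carries only two clasps, so the ``cyclic order'' data is vacuous, and you are in effect claiming that the link type is a function of the three clasp signs. Test this on the necklace shadow: three round circles $A,B,C$, each pair meeting in a lens disjoint from the third circle, layered $A$ over $B$ over $C$. Every resolution in which all three pairs are linked has your form. Orient all three circles counterclockwise, and switch crossings so that $(\mathrm{lk}_{AB},\mathrm{lk}_{BC},\mathrm{lk}_{CA})=(+1,+1,+1)$, respectively $(+1,-1,-1)$. Reversing $C$ in the second configuration makes all three of its clasp signs $+1$ as well, so the two configurations have the same clasp signs. Yet the first diagram is alternating, hence $L6a5$. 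The second is a non-alternating six-crossing diagram on a prime shadow, so by the Kauffman--Murasugi--Thistlethwaite theorems it is not $L6a5$; it is $L6n1$ up to mirror image. So the outcome depends on where the clasps sit, which your outline never controls. Your sliding justification also rests on a false premise for the clasp at $c_{13}$. Its finger runs from the top layer to the bottom one through the layer of $S_2$; on the Borromean shadow, for example, the switched $A$--$C$ crossing may lie inside $B$. So it is not true that every other strand lies entirely above or below, and that clasp cannot in general be slid past $S_2$.

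Step (4) has no working criterion either. The example shows that linking-number data cannot separate the two candidates. With all $\mathrm{lk}_{ij}=\pm1$, the only orientation-independent content of the signs is their product, which mirroring reverses, so $L6n1$ and $L6a5$, taken up to mirror image, each realize both values. Milnor's $\bar\mu(123)$ is defined only modulo $\gcd(\mathrm{lk}_{12},\mathrm{lk}_{13},\mathrm{lk}_{23})=1$, so it is vacuous. Also, reversing the over/under choice at $c_{ij}$ does not flip the sign of a clasp; it removes the clasp and gives $\mathrm{lk}_{ij}=0$. To change the sign you must move $c_{ij}$ to a crossing of $S_i$ and $S_j$ of the opposite sign. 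Such crossings exist, since two closed curves in $\sphere$ have algebraic intersection number zero, but moving $c_{ij}$ changes precisely the positional data shown above to matter. Chirality, by contrast, is harmless, since switching every crossing of a resolution gives the mirror image.

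To complete the argument you would need three things:
\begin{enumerate}
\item[(i)] a genuine isotopy analysis of the three-clasp configurations, including how the $S_1$--$S_3$ finger passes the layer of $S_2$;
\item[(ii)] a real invariant that tells $L6n1$ from $L6a5$, such as a polynomial invariant, or the non-existence of non-alternating minimal diagrams of prime alternating links applied to a six-crossing model;
\item[(iii)] a proof that some admissible choice of the $c_{ij}$ always produces $L6n1$.
\end{enumerate}
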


% ******************************************************************************************
% ******************************************************************************************
% ******************************************************************************************
% ******************************************************************************************
% ******************************************************************************************
% ******************************************************************************************
% ******************************************************************************************

%%%%%%%%%%%%%%%%%%%%%%%%%%%%%%%%%%%%%%%%%%%%%%%%%%%%%%%%%%%%%%%%%%%%%%%%%%%
%%%%%%%%%%%%%%%%%%%%%%%%%%%%%%%%%%%%%%%%%%%%%%%%%%%%%%%%%%%%%%%%%%%%%%%%%%%

\subsection{Proofs of Theorems~\ref{thm:knots} and~\ref{thm:links}}

\begin{proof}[Proof of Theorem~\ref{thm:knots}]
Apply Lemma~\ref{lem:primesknots} to $S$, obtaining in $O(n)$ time the Gauss codes of prime shadows $S_1,\ldots,S_k$. Let $n_i:=|S_i|$ for $i=1,\ldots,k$. Thus 
$$ n_1+\cdots+n_k\leq n. $$
Moreover, since each of the knots
$$ 3_1,4_1,5_1,5_2,6_2 $$
is a nontrivial prime knot, Lemma~\ref{lem:primesknots} implies that, for each $K$ in this set,
$$S\text{ resolves into }K \quad\Longleftrightarrow\quad S_i\text{ resolves into }K\text{ for some }i.$$

Thus it suffices, for each $i$, to decide in $O(n_i)$ time whether $S_i$ resolves into $K$.

If $K=3_1$, Lemma~\ref{lem:31} says that every $S_i$ resolves into $3_1$, so $S$ resolves into $3_1$ if and only if $k\ge1$.

If $K=4_1$, shadows $S_i$ with $n_i<4$ do not resolve into $4_1$. On the other hand, for $n_i\ge 4$ Lemma~\ref{lem:41} and Lemma~\ref{lem:testknot}(i) decide in $O(n_i)$ time whether $S_i$ resolves into $4_1$, namely by testing whether $S_i$ is a torus shadow.

For $K=5_1$, Lemma~\ref{lem:51} and Lemma~\ref{lem:testknot}(ii),(iii) give an $O(n_i)$ test: $S_i$ must be neither a pretzel nor a nutcracker shadow.

For $K=5_2$, Lemma~\ref{lem:52} and Lemma~\ref{lem:testknot}(i),(iv) give an $O(n_i)$ test: $S_i$ must be equivalent to neither a torus shadow nor $N(2,2)$.

Finally, for $K=6_2$, Lemmas~\ref{lem:62} and~\ref{lem:testknot} give an $O(n_i)$ test, using part~(iv) for $S(6_3)$ and $S(8_{18})$.

In every case the total running time is
$$
O(n)+O(n_1+\cdots+n_k)=O(n).
$$
This proves the theorem.
\end{proof}

\begin{proof}[Proof of Theorem~\ref{thm:links}]
We assume that we are given as input a Gauss paragraph of a link shadow $S$ with $n$ vertices. The goal is to show that if $L\in \{L2a1,L4a1,L5a1,L6n1\}$ then we can verify in $O(n)$ time whether $S$ resolves into $L$.

First we check whether $S$ has the correct number of components: two for $L2a1,L4a1$ and $L5a1$, and three for $L6n1$. If not, then $S$ cannot resolve into $L$, and so we stop.

Next we verify whether $S$ is connected. For this we note that $S$ is connected if and only if the graph whose vertices are the words of the Gauss paragraph of $S$, with two words being adjacent whenever they share a vertex label, is connected. This can also be decided in $O(n)$ time. If the answer is negative then $S$ only resolves into split links. Thus it cannot resolve into $L$, as none of $L2a1, L4a1, L5a1$, and $L6n1$ is split, and so we are done. Otherwise we move on.

Suppose first that $L$ is $L2a1$. In this case by Lemma~\ref{lem:L2a1} $S$ resolves into $L$, and so we are done.

Suppose now that $L$ is $L4a1$. In this case by Lemma~\ref{lem:L4a1} in order to see if $S$ resolves into $L$ it suffices to check whether the two components of $S$ share at least $4$ vertices. By Lemma~\ref{lem:test2link}(i) this can be verified in $O(n)$ time, and so we are done.

Suppose then that $L$ is $L5a1$. Since this is a prime link, we may invoke Lemma~\ref{lem:primeslinks} to compute in $O(n)$ time the Gauss paragraph of a connected prime $2$-component link shadow $S'$ with $m\le n$ vertices and with the property that $S$ resolves into $L$ if and only if $S'$ resolves into $L$. By Lemma~\ref{lem:L5a1}, $S'$ resolves into $L$ if and only if $S'$ is not equivalent to a torus shadow. By Lemma~\ref{lem:test2link}(ii) we can test this in $O(m)$ time, and hence in $O(n)$ time.

Suppose finally that $L$ is $L6n1$. In this case in view of Lemma~\ref{lem:L6n1} it suffices to verify whether $S$ is pairwise crossing. Lemma~\ref{lem:test3link} guarantees that this can be decided in $O(n)$ time, and so we are done.
\end{proof}

%%%%%%%%%%%%%%%%%%%%%%%%%%%%%%%%%%%%%%%%%%%%%%%%%%%%%%%%%%%%%%%%%%%%%%%%%%%
%%%%%%%%%%%%%%%%%%%%%%%%%%%%%%%%%%%%%%%%%%%%%%%%%%%%%%%%%%%%%%%%%%%%%%%%%%%
%%%%%%%%%%%%%%%%%%%%%%%%%%%%%%%%%%%%%%%%%%%%%%%%%%%%%%%%%%%%%%%%%%%%%%%%%%5
%%%%%%%%%%%%%%%%%%%%%%%%%%%%%%%%%%%%%%%%%%%%%%%%%%%%%%%%%%%%%%%%%%%%%%%%%%5
%%%%%%%%%%%%%%%%%%%%%%%%%%%%%%%%%%%%%%%%%%%%%%%%%%%%%%%%%%%%%%%%%%%%%%%%%%5
% ****************************************************************************************

% ****************************************************************************************
%%%%%%%%%%%%%%%%%%%%%%%%%%%%%%%%%%%%%%%%%%%%%%%%%%%%%%%%%%%%%%%%%%%%%%%%%%%
%%%%%%%%%%%%%%%%%%%%%%%%%%%%%%%%%%%%%%%%%%%%%%%%%%%%%%%%%%%%%%%%%%%%%%%%%%%
%%%%%%%%%%%%%%%%%%%%%%%%%%%%%%%%%%%%%%%%%%%%%%%%%%%%%%%%%%%%%%%%%%%%%%%%%%%

\section{Proofs of Lemmas~\ref{lem:primesknots} and~\ref{lem:primeslinks}}
\label{sec:primeproofs}

The proofs use the standard cactus representation of the $2$-edge cuts of a $2$-edge-connected graph.

\begin{lemma}[Prime decomposition of a shadow]\label{lem:shadowcactus}
Let $S$ be a connected shadow with $n\geq2$ vertices. Given its Gauss code, or its Gauss paragraph in the link case, one can compute in $O(n)$ time connected shadows $P_1,\ldots,P_r$ such that:
\begin{enumerate}
\item[(i)] $P_1,\ldots,P_r$ are obtained from $S$ by recursively splitting along $2$-edge cuts and joining the two severed ends on each side; 
\item[(ii)] every $P_i$ with at least two vertices has no $2$-edge cut; 
\item[(iii)]
$$
|P_1|+\cdots+|P_r|=n;
$$
\item[(iv)] for each $i=1,\ldots,r$ the Gauss code or paragraph of $P_i$ is obtained by restricting the input component words to $V(P_i)$ and (for Gauss paragraphs) discarding the empty words.
\end{enumerate}
\end{lemma}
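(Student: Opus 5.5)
The plan is to build the underlying $4$-regular multigraph $G$ of $S$ from the Gauss code (or paragraph), compute a cactus representation of all its $2$-edge cuts, and then read the pieces $P_i$ off the cactus. First, reading the cyclic word(s), each consecutive pair of occurrences $a^{\varepsilon}b^{\delta}$ gives an edge $ab$ of $G$; this includes the wrap-around pair, and a single-letter-pair word gives a loop. So $G$ has $n$ vertices and $2n$ edges and is built in $O(n)$ time. Since $G$ is connected and Eulerian, it is $2$-edge-connected. We then invoke a linear-time algorithm for the $3$-edge-connected components of a graph; Tsin, Nagamochi--Ibaraki and Georgiadis et al.\ give $O(|V|+|E|)$ algorithms. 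Equivalently, we compute the cactus of minimum cuts, since here the edge-connectivity is exactly $2$ whenever a $2$-edge cut exists. The output is a partition of $V(G)$ into classes $V_1,\dots,V_r$ such that two vertices lie in the same class iff no $2$-edge cut separates them.

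Second, we translate each class into a shadow. The key local fact concerns a $2$-edge cut $\{e,f\}$ separating $X$ from $\overline X$. Because every vertex has even degree, a straight-ahead walk crossing the cut must cross it an even number of times. Hence $e$ and $f$ lie on the same component, the walk leaves $X$ along one edge and returns along the other, and the portion inside $\overline X$ is a contiguous subword. Splitting along the cut means deleting that subword and joining the two severed ends into a single edge. On the Gauss word this is exactly restriction to $X$. Signs are unchanged, because the rotation at each vertex of $X$ is untouched; this also requires that the new edge can be drawn within the face structure, which holds since $e$ and $f$ bound a common pair of faces by planarity. Iterating over the cactus, each final piece $P_i$ is obtained by restricting every word to $V_i$ and discarding words that become empty, which is (iv). Property (i) holds by construction, and (iii) follows because the $V_i$ partition $V(G)$. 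The restriction is done in one pass: we scan each word once, distribute each occurrence to the list of its class, and preserve cyclic order. Since each occurrence goes to exactly one class, the total cost is $O(n)$.

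Third, for (ii) we must show that a piece $P_i$ with at least two vertices has no $2$-edge cut. Suppose $P_i$ had one. Each virtual edge of $P_i$ stands for a side of $S$ that is attached through a $2$-cut, so we can lift the cut of $P_i$ to a $2$-edge cut of $S$: replace each virtual edge with one of its original edges, and put the hanging subgraph on whichever side contains that edge's endpoints. This lifted cut separates two vertices of $V_i$, which contradicts the definition of the classes.

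The main obstacle is precisely the correspondence between the graph-theoretic $3$-edge-connected classes and the iterative shadow splitting. In particular, one must check that splits commute, so that the result is independent of the order of splitting, and that restricting to $V_i$ really yields $P_i$ even when several nested cuts touch the same word. I would handle this by induction on the number of cactus cycles, splitting off a leaf piece one at a time.
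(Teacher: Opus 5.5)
Your proposal is correct and follows essentially the paper's route: underlying multigraph, linear-time $3$-edge-connected classes/cactus, each $P_i$ obtained by restricting the component words to its class, and (iv) from the fact that both edges of a $2$-edge cut lie on one component. (That fact holds because a closed walk crosses any edge cut an even number of times, not because of the vertex degrees.) You are also more explicit than the paper about drawing each split in the plane and about matching iterated splitting with the classes. The only substantive difference is in (ii): you lift a hypothetical $2$-edge cut of $P_i$ back to $S$ by trading each virtual edge for one of its two original edges, whereas the paper projects three edge-disjoint $u$--$v$ paths of $H$ into $P_i$ by short-cutting their excursions through repair edges; these are Menger-dual forms of one argument, both resting on the cactus fact that $V(S)\setminus V_i$ splits into parts each joined to the rest of $S$ only by two edges ending in $V_i$.
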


For the proof we recall that a cactus is a connected graph in which every edge belongs to exactly one cycle.  The $2$-edge cuts of a $2$-edge-connected graph admit a cactus representation whose vertices correspond to its $3$-edge-connected components.  For this representation and its linear-time computation, see~\cite[Sections~10--11]{mehlhorn} and~\cite{tsin}.

\begin{proof}
Let $G$ be the underlying $4$-regular multigraph of $S$, and let $H$ be obtained from $G$ by deleting its loop edges (and keeping their incident vertices). Since $G$ has $2n$ edges, both $G$ and $H$ can be constructed from the input in $O(n)$ time.

Since $G$ is $4$-regular and connected, deleting its loop edges leaves $H$ connected and decreases every vertex degree by an even number. Thus $H$ is Eulerian, and hence has no cut-edges. Its $3$-edge-connected components $X_1,\ldots,X_r$, together with the cactus $C$ representing all its $2$-edge cuts can be computed in $O(n)$ time \cite{tsin,mehlhorn}.

Having computed $X_1,\ldots,X_r$ and the cactus $C$ we now move on to obtaining $P_1,\ldots,P_r$ and proving Properties (i)--(iv). As usual, we let $V(X_i)$ denote the vertex set of $X_i$, for $i=1,\ldots,r$.

For each $i=1,\ldots,r$ let $x_i$ be the vertex of $C$ corresponding to $X_i$.  Let $P_i$ be obtained from the subgraph of $G$ induced by $V(X_i)$ as follows. For every cycle $Q$ of $C$ containing $x_i$, the two edges of $Q$ incident with $x_i$ correspond to two edges of $H$ leaving $X_i$; add a {\em repair} edge joining their $X_i$-ends.  Each repair edge is precisely the edge created on the $X_i$-side by splitting along the corresponding $2$-edge cut.  Therefore recursively splitting along the cuts represented by the cactus produces precisely the connected shadows $P_1,\ldots,P_r$, as claimed in (i).

As with $X_i$, we use $V(P_i)$ to denote the vertex set of $P_i$. 

In order to prove (ii) we show the following for each $i=1,\ldots,r$: ($*$) {\em if $u,v$ are any two distinct vertices of $P_i$, then $u$ and $v$ are joined by three edge-disjoint paths in $P_i$.}

To prove ($*$) first we note that since $V(P_i)=V(X_i)$ then both $u$ and $v$ are in $X_i$. Now since $X_i$ is a $3$-edge-connected component of $H$ then there are three edge-disjoint $u$--$v$ paths in $H$.  In each of these paths we replace every maximal subpath whose internal vertices lie outside $X_i$ by the repair edge corresponding to the unique cycle of $C$ that contains the two cactus edges corresponding to the first and last edges of that subpath. This gives three $u$--$v$ paths contained in $P_i$. Moreover, these are edge-disjoint: indeed, if two of them used the same repair edge then the corresponding two paths in $H$ would share one of the two edges leaving $X_i$ that define that repair edge, which is impossible.  Thus ($*$) holds, and so (ii) follows.

To prove (iii) we note that the sets $V(X_1),\ldots,V(X_r)$ partition $V(H)=V(G)=V(S)$, and the repair operation creates no vertices. Since $V(P_i)=V(X_i)$ for $i=1,\ldots,r$, it follows that $$|P_1|+\cdots+|P_r|=n,$$ as claimed in (iii).

In order to prove (iv) let us consider one step in the recursive splitting process of (i), in which a shadow is split along a $2$-edge cut. The two cut edges belong to the same straight-ahead component, since each closed straight-ahead component uses an even number of edges of any cut. The key observation is that on each side the repair edge simply shortcuts the portion of that component lying on the other side, and so the component words on each side are obtained by restricting the previous component words to the vertices on that side. Moreover, the local configuration at every retained vertex does not change, and so in particular its sign is unchanged. Therefore iterating this reasoning over all the splits (iv) follows.

In order to see that indeed the whole construction takes $O(n)$ time we first note that the graph $H$, its $3$-edge-connected components, and the cactus $C$ are computed in $O(n)$ time. Since the cactus has $O(n)$ size it follows that all the repair edges can also be constructed in $O(n)$ time. Moreover, once the component containing each vertex is known, all the Gauss codes or paragraphs of all the shadows $P_i$ can be obtained in one scan of the component words of the original input. Thus the total running time is $O(n)$.
\end{proof}

\begin{proof}[Proof of Lemma~\ref{lem:primesknots}]
If $n\leq1$, return the empty collection. Assume $n\geq2$ and apply Lemma~\ref{lem:shadowcactus} (note that $S$ is connected, being the image of a single closed straight-ahead walk), obtaining $P_1,\ldots,P_r$. Since the input has one component word, Lemma~\ref{lem:shadowcactus}(iv) shows that every $P_i$ is a knot shadow.

Discard the one-vertex pieces and call the remaining shadows $S_1,\ldots,S_k$. By Lemma~\ref{lem:shadowcactus}(ii), they are prime, and
$$|S_1|+\cdots+|S_k|\leq n.$$
By Lemma~\ref{lem:shadowcactus}(i), $S$ is an iterated connected sum of the $P_i$. The discarded pieces resolve only into the unknot, so Theorem~\ref{thm:motiv} gives, for every nontrivial prime knot $K$,
$$ S\text{ resolves into }K \quad\Longleftrightarrow\quad S_i\text{ resolves into }K\text{ for some }i.$$
Finally, Lemma~\ref{lem:shadowcactus}(iv) gives all the required Gauss codes in total $O(n)$ time.
\end{proof}

\begin{proof}[Proof of Lemma~\ref{lem:primeslinks}]
Since $S$ is a connected $2$-component link shadow, it has at least two vertices. Apply Lemma~\ref{lem:shadowcactus}, obtaining $P_1,\ldots,P_r$.

Consider a split of a $2$-component link shadow along a $2$-edge cut. Each straight-ahead component uses an even number of cut edges, so one component uses both and the other uses neither. Thus one resulting piece is again a $2$-component link shadow and the other is a knot shadow. A split of a knot shadow produces two knot shadows. It follows inductively that exactly one of the $P_i$ is a $2$-component link shadow; call it $S'$.

The shadow $S'$ has at least two vertices, and by Lemma~\ref{lem:shadowcactus}(ii) it has no $2$-edge cut. Hence it is prime. By Lemma~\ref{lem:shadowcactus}(iii),
$$|S'|\leq n, $$
and Lemma~\ref{lem:shadowcactus}(iv) gives its Gauss paragraph in $O(n)$ time.

By Lemma~\ref{lem:shadowcactus}(i), reversing the splits reconstructs $S$ from $S'$ by inserting knot-shadow summands into its two components. These summands may all be resolved into the unknot, while in any resolution of $S$ that is a prime link they must all resolve into the unknot, since a connected sum of a link with a nontrivial knot is not prime. Therefore, for every prime link $L$,
$$S\text{ resolves into }L \quad\Longleftrightarrow\quad S'\text{ resolves into }L. $$
This proves the lemma.
\end{proof}

%%%%%%%%%%%%%%%%%%%%%%%%%%%%%%%%%%%%%%%%%%%%%%%%%%%%%%%%%%%%%%%%%%%%%%%%%%%
%%%%%%%%%%%%%%%%%%%%%%%%%%%%%%%%%%%%%%%%%%%%%%%%%%%%%%%%%%%%%%%%%%%%%%%%%%%
%%%%%%%%%%%%%%%%%%%%%%%%%%%%%%%%%%%%%%%%%%%%%%%%%%%%%%%%%%%%%%%%%%%%%%%%%%%

\section{Normal forms for Gauss codes and paragraphs of the relevant shadows}\label{sec:normalforms}

Throughout this section we will use for words the same notation introduced in Section~\ref{sec:gausscodes} for Gauss codes. That is, if
$$
A=a_1^{\varepsilon_1}\cdots a_k^{\varepsilon_k}
$$
is a word, then $\Neg{A}$ is obtained by changing every sign, and $\rev{A}$ is obtained by reversing the order of the letters. Thus
$$
\Neg{A}=a_1^{-\varepsilon_1}\cdots a_k^{-\varepsilon_k},
\qquad
\rev{A}=a_k^{\varepsilon_k}\cdots a_1^{\varepsilon_1}.
$$

(The following use of the word {\em alternating} is unrelated to its usual meaning for link diagrams.) We say that $A$ is {\em alternating} if $\varepsilon_i=-\varepsilon_{i+1}$ for every $i=1,\ldots,k-1$. If in addition $\varepsilon_{k}=-\varepsilon_1$ then $A$ is {\em cyclically alternating}.

\subsection{Gauss codes of torus shadows}\label{sub:torusgauss}

We start by identifying a criterion to verify whether a given knot shadow is equivalent to a torus shadow.

\begin{observation}\label{obs:torus1}
Let $S$ be a knot shadow with $p$ vertices, where $p$ is odd, and let $\Gamma$ be a Gauss code of $S$. Then $S$ is equivalent to $T(2,p)$ if and only if $\Gamma$ is congruent to
$$
A\,\Neg{A},
$$
\noindent for some alternating word $A$ of length $p$ whose vertex labels are pairwise distinct.
\end{observation}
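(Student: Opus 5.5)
The plan is to combine Proposition~\ref{pro:carter} with a direct computation of one Gauss code of $T(2,p)$. By Proposition~\ref{pro:carter}, $S$ is equivalent to $T(2,p)$ if and only if $\Gamma$ is congruent to some (equivalently, any) Gauss code $\Gamma_0$ of $T(2,p)$. So it suffices to prove two facts. First, $T(2,p)$ has a Gauss code of the form $A_0\,\Neg{A_0}$ with $A_0$ alternating of length $p$ and with distinct labels. Second, every word $A\,\Neg{A}$ of this type is congruent to $A_0\,\Neg{A_0}$.

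For the first fact, label the vertices $a_1,\ldots,a_p$ cyclically as in the description of $T(2,p)$, and traverse the shadow from a point just before $a_1$ on one of the two strands of the twist region. Since $p$ is odd, the straight-ahead walk visits $a_1,a_2,\ldots,a_p$ in order on one strand, then switches to the other strand and visits $a_1,\ldots,a_p$ again. The letter sequence is therefore $a_1\cdots a_p\,a_1\cdots a_p$, and we only need to check the signs. At consecutive crossings of a two-strand twist, the traversed strand passes alternately from one side of the other strand to the other side. With the convention of Figure~\ref{fig:gauss1}, this means the incoming edge alternates between positive and negative, so the first half $A_0$ is alternating. Each vertex occurs once with each sign, so the second half is forced to be $\Neg{A_0}$. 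This is the step I expect to need the most care: the sign convention has to be checked against a picture of two consecutive crossings, and one must confirm that the two halves really visit the vertices in the same cyclic order rather than in reversed order. Using the rotational symmetry of $T(2,p)$ should settle both points.

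For the second fact, let $A=b_1^{\varepsilon_1}\cdots b_p^{\varepsilon_p}$ be alternating with distinct labels. Relabel $b_i\mapsto a_i$. Then $A$ equals either $A_0$ or $\Neg{A_0}$, since an alternating word is determined by its first sign. In the second case, applying negation to the whole code turns $\Neg{A_0}\,A_0$ into $A_0\,\Neg{A_0}$, which is a congruence.

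Combining the two facts: if $\Gamma$ is congruent to some $A\,\Neg{A}$, then $\Gamma$ is congruent to $A_0\,\Neg{A_0}$, which is a Gauss code of $T(2,p)$, so $S$ is equivalent to $T(2,p)$ by Proposition~\ref{pro:carter}. Conversely, if $S$ is equivalent to $T(2,p)$, then $\Gamma$ is congruent to the Gauss code $A_0\,\Neg{A_0}$, which has the required form. The oddness of $p$ is used only in the first fact: it guarantees that $T(2,p)$ is a knot shadow, whose single component visits each vertex twice in the pattern described.
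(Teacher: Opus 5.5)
Your proposal is correct and follows essentially the same route as the paper: read off a Gauss code $A_0\,\Neg{A_0}$ of $T(2,p)$ with $A_0$ alternating, note that every alternating word of length $p$ with distinct labels is a relabelling of $A_0$ or of $\Neg{A_0}$ (the latter absorbed by a global negation), and conclude with Proposition~\ref{pro:carter}. Your explicit argument for the sign alternation and for the second pass visiting the vertices in the same order is exactly what the paper leaves to its figure.
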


\begin{proof}
With the vertex labelling and traversal illustrated in Figure~\ref{fig:torusgauss1}, we obtain for $T(2,p)$ the Gauss code
$$
A\Neg{A},
\qquad
A:=a_1^+a_2^-a_3^+\cdots a_p^+.
$$

Every alternating word of length $p$ with pairwise distinct vertex labels is obtained from $A$ by a relabelling and, possibly, negation. The result now follows from Proposition~\ref{pro:carter}.
\end{proof}

% **************************************************************
\begin{figure}[htbp] 
\def\ta#1{{\Scale[3.0]{#1}}} 
\def\tb#1{{\Scale[3.2]{#1}}} 
\def\tc#1{{\Scale[3.5]{#1}}} 
\def\somea{{\Scale[4.8]{\text{\rm (a)}}}} 
\def\somea{{\Scale[4.8]{\text{\rm (a)}}}} 
\def\someb{{\Scale[4.8]{\text{\rm (b)}}}} 
\def\somec{{\Scale[4.8]{\text{\rm (c)}}}} 
\def\somed{{\Scale[4.8]{\text{\rm (d)}}}} 
\def\somee{{\Scale[4.8]{\text{\rm (e)}}}} 
\def\somef{{\Scale[4.8]{\text{\rm (f)}}}} 
\def\tsa{\scalebox{8}{\small\rmfamily (Knot shadow)}}
\def\ltsa{\scalebox{8}{\small\rmfamily ($2$-component link shadow)}}
\centering 
\scalebox{0.15}{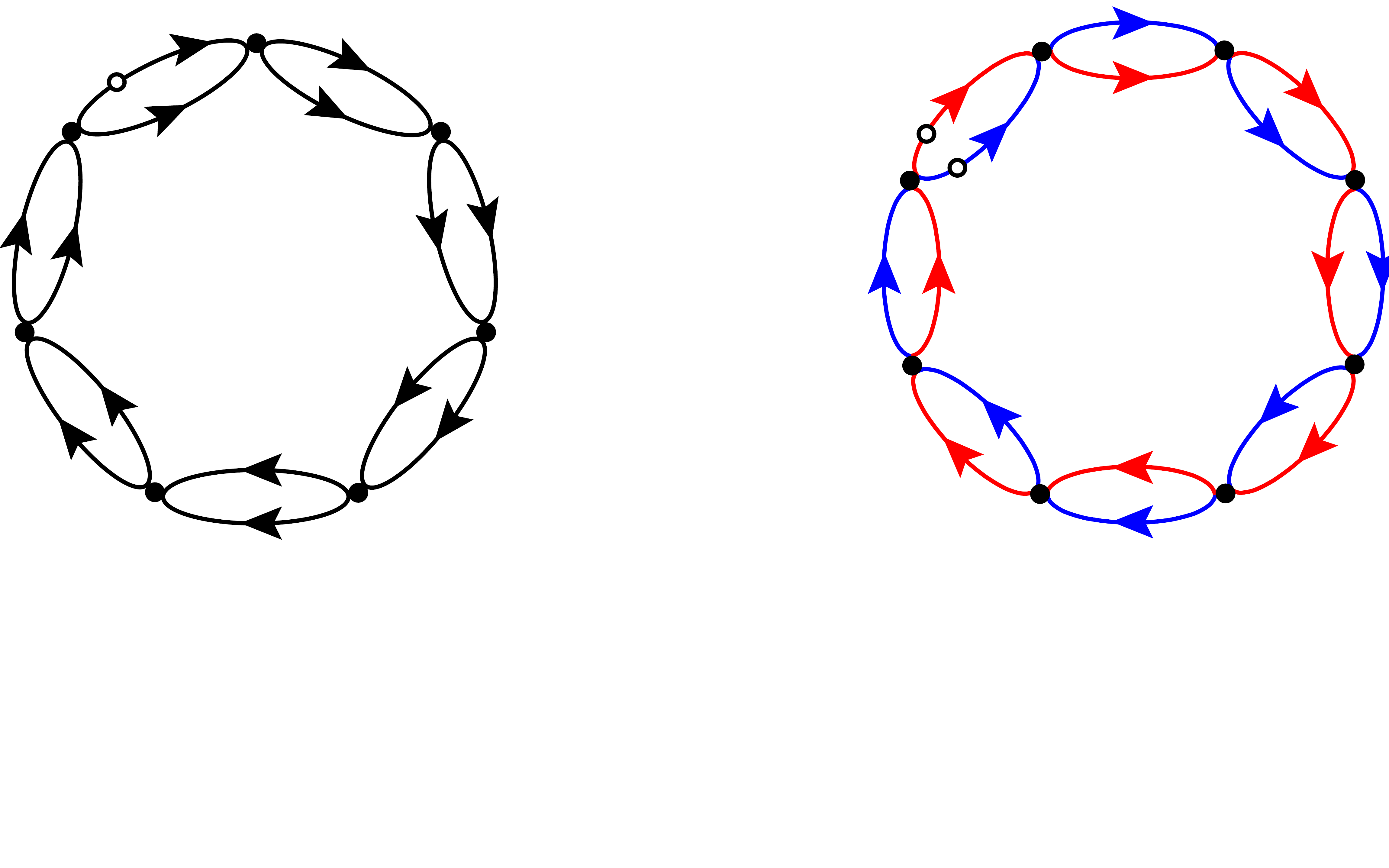}
\caption{Illustration of the proofs of Observations~\ref{obs:torus1} and~\ref{obs:torus2}.}
\label{fig:torusgauss1} 
\end{figure} 
% **************************************************************

\begin{corollary}[Identifying torus knot shadows]\label{cor:torus1}
Let $S$ be a knot shadow with $p$ vertices, where $p$ is odd. Let $\Gamma$ be a Gauss code of $S$, and let $B$ be the subword of $\Gamma$ that consists of its first $p$ symbols. Then $S$ is equivalent to $T(2,p)$  if and only if $B$ is alternating and $\Gamma=B\, \Neg{B}$.
\end{corollary}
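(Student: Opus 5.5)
The plan is to derive the corollary from Observation~\ref{obs:torus1}. The key point is that the form $A\,\Neg{A}$ with $A$ alternating is preserved by every operation used in congruence, and that it does not depend on where the basepoint is placed. The ``if'' direction is immediate. If $B$ is alternating and $\Gamma=B\,\Neg{B}$, then each of the $p$ vertices occurs exactly twice in $\Gamma$, once with each sign. The occurrence in $\Neg{B}$ is the negation of the occurrence in $B$, so the labels of $B$ are pairwise distinct. Thus $\Gamma$ itself has the form of Observation~\ref{obs:torus1}, and $S$ is equivalent to $T(2,p)$.

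For the ``only if'' direction, assume $S$ is equivalent to $T(2,p)$. By Observation~\ref{obs:torus1}, $\Gamma$ is congruent to $A\,\Neg{A}$ for some alternating $A$ of length $p$ with distinct labels. I would first note that $W:=A\,\Neg{A}$ is cyclically alternating. Indeed, $A$ alternates, and since $p$ is odd the last sign of $A$ equals its first sign $\varepsilon_1$. The first sign of $\Neg{A}$ is then $-\varepsilon_1$, which differs from the last sign of $A$, and the last sign of $\Neg{A}$ is $-\varepsilon_1$, which differs from the first sign of $A$. Moreover, $W$ has the property that the symbol at cyclic position $i+p$ is the negation of the symbol at position $i$, that is, the same label with the opposite sign.

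Next I would check that this property (cyclically alternating, and position $i+p$ carrying the negation of position $i$) is invariant under the three congruence operations and under relabelling. Relabelling and negation clearly preserve it. For reversal, $\rev{W}$ is still cyclically alternating, and positions $i$ and $i+p$ remain $p$ apart modulo $2p$. Cyclic translation preserves both properties because they are stated cyclically. Hence $\Gamma$ itself has the property. Writing $B$ for its first $p$ symbols, $B$ is a contiguous subword of a cyclically alternating word, so it is alternating. The last $p$ symbols are the negations of the first $p$, in the same order, so $\Gamma=B\,\Neg{B}$.

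The only step needing care is the invariance check under cyclic translation. It works because the antipodal relation $i\leftrightarrow i+p$ is defined modulo $2p$, and the parity of $p$ is what makes the sign pattern consistent across the wraparound. Everything else is bookkeeping.
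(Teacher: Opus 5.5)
Your proof is correct and follows the paper's argument essentially step for step. In the ``only if'' direction, both proofs reduce to Observation~\ref{obs:torus1} via the two invariant properties: cyclic alternation, and position $i+p$ carrying the negation of position $i$. In the ``if'' direction, both use the count that each label occurs exactly twice to show the labels of $B$ are distinct.
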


\begin{proof}
Suppose first that $S$ is equivalent to $T(2,p)$. By Observation~\ref{obs:torus1}, $\Gamma$ is then congruent to $A\,\Neg{A}$, for some alternating word $A$ of length $p$.

Since $p$ is odd, (a) the word $A\,\Neg{A}$ is cyclically alternating. Moreover, (b) its $(i+p)$-th symbol is the negative of its $i$-th symbol, for every $i$ (indices read modulo $2p$). Clearly (a) and (b) are preserved by cyclic translation, reversal, negation, and relabelling, and so they are also satisfied in $\Gamma$. Now (a) implies that the first $p$ symbols of $\Gamma$ form an alternating word $B$, and (b) that $\Gamma=B\,\Neg{B}$.

Conversely, suppose that $B$ is alternating and $\Gamma=B\,\Neg{B}$. Since every label occurring in $B$ occurs again in $\Neg{B}$ and each label occurs exactly twice in $\Gamma$ it follows that the labels of $B$ are pairwise distinct. Therefore by Observation~\ref{obs:torus1} $S$ is equivalent to $T(2,p)$.
\end{proof}

% ****************************************************************************************
\subsection{Gauss codes of pretzel shadows}\label{sub:pre1}

We now identify whether an input shadow is a pretzel shadow.

\begin{observation}[Pretzel shadows]\label{obs:pretzel1}
Let $S$ be a knot shadow, and let $\Gamma$ be a Gauss code of $S$. Then $S$ is equivalent to a pretzel shadow if and only if $\Gamma$ is congruent to
$$
A\,B\,C\,\Neg{\rev{A}}\,\Neg{\rev{B}}\,\Neg{\rev{C}},
$$
where each of $A,B$ and $C$ has odd length, $ABC$ is alternating, and the vertex labels of $ABC$ are pairwise distinct.
\end{observation}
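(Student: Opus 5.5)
The proof mirrors that of Observation~\ref{obs:torus1}: compute one explicit Gauss code of $S(p_1,p_2,p_3)$, then reduce everything else to Proposition~\ref{pro:carter}.

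First I would fix the labelling of Figure~\ref{fig:pretzels}, with chains $A=(a_1,\ldots,a_{p_1})$, $B=(b_1,\ldots,b_{p_2})$, $C=(c_1,\ldots,c_{p_3})$. I would choose a basepoint just above $a_1$ on one of its two strands and traverse downward. Straight-ahead traversal of a twist region of odd length crosses all of it, entering at the top and leaving at the bottom; along the way it meets $a_1,\ldots,a_{p_1}$ in order with alternating signs. Consecutive crossings in a twist are met from alternating sides of the other strand, and this is exactly what alternation of the incoming-edge signs in Figure~\ref{fig:gauss1} records.

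Next I would track where the walk goes after leaving $a_{p_1}$. It goes to the triple junction $\{a_{p_1},b_1,c_{p_3}\}$ and enters chain $B$ at $b_1$. This is where the upward labelling of $B$ pays off: the walk reads $b_1,\ldots,b_{p_2}$ in increasing order. From $b_{p_2}$ it reaches the junction $\{a_1,b_{p_2},c_1\}$ and reads $c_1,\ldots,c_{p_3}$. One has to check that the sign pattern stays alternating across the two junctions, so that the whole word $ABC$ is alternating; I would verify this directly from the figure. Because each chain has odd length, the walk has now used exactly one strand of each twist. The second half of the traversal goes back through $A$, $B$, $C$ in reverse order, $a_{p_1},\ldots,a_1$ and so on. At each vertex the walk now arrives along the other incoming edge, which flips every sign. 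This yields the code
\[
A\,B\,C\,\Neg{\rev{A}}\,\Neg{\rev{B}}\,\Neg{\rev{C}}.
\]
Confirming the reversal order and the global sign flip in the second half, including the transitions at the two junctions, is the main obstacle. Everything else is formal, and I would settle this point by careful inspection of the figure (it can be sanity-checked on $S(1,1,1)=T(2,3)$).

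Finally, any triple of odd-length words with $ABC$ alternating and pairwise distinct labels differs from the words just computed only by a relabelling and possibly a global negation. Negation is one of the congruence operations, so every such code is congruent to a code of some $S(p_1,p_2,p_3)$, with $p_1,p_2,p_3$ the lengths of $A,B,C$. Conversely, every pretzel shadow has a code of this form. Proposition~\ref{pro:carter} then gives both directions of the equivalence.
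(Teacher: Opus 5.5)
Your proposal is correct and follows the paper's own argument. You compute the explicit code $A\,B\,C\,\Neg{\rev{A}}\,\Neg{\rev{B}}\,\Neg{\rev{C}}$ of $S(p_1,p_2,p_3)$ from the standard labelling and traversal; the paper records $A=a_1^+a_2^-\cdots a_{p_1}^+$, $B=b_1^-b_2^+\cdots b_{p_2}^-$, $C=c_1^+c_2^-\cdots c_{p_3}^+$, which settles the junction sign check you deferred to the figure. You then observe that every admissible triple arises from this one by relabelling and possibly negation, and conclude via Proposition~\ref{pro:carter}, exactly as the paper does.
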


\begin{proof}
With the vertex labelling and traversal illustrated in Figure~\ref{fig:pretzelgauss1}, we obtain for $S(p_1,p_2,p_3)$ the Gauss code
$$
A\,B\,C\,\Neg{\rev{A}}\,\Neg{\rev{B}}\,\Neg{\rev{C}},
$$
\noindent where
$$A := a_1^+ a_2^- \cdots a_{p_1}^+, \,\, B:= b_1^- b_2^+ \cdots b_{p_2}^-,  \,\, \text{\rm and }\, \, C:= c_1^+ c_2^- \cdots c_{p_3}^+.$$
Every choice of $A,B,C$ satisfying the conditions in the statement is obtained from this one by relabelling the vertices and, possibly, negating all signs. The observation now follows from Proposition~\ref{pro:carter}.
\end{proof}

% **************************************************************
\begin{figure}[htbp] 
\def\ta#1{{\Scale[5.0]{#1}}} 
\def\tb#1{{\Scale[3.6]{#1}}} 
\def\td#1{{\Scale[4.6]{#1}}} 
\def\Ap#1{{\Scale[4.6]{a_{{}_{#1}}}}} 
\def\Bp#1{{\Scale[4.6]{b_{{}_{#1}}}}} 
\def\Cp#1{{\Scale[4.6]{c_{{}_{#1}}}}} 
\def\somea{{\Scale[4.8]{\text{\rm (a)}}}} 
\def\someb{{\Scale[4.8]{\text{\rm (b)}}}} 
\def\somec{{\Scale[4.8]{\text{\rm (c)}}}} 
\def\somed{{\Scale[4.8]{\text{\rm (d)}}}} 
\def\somee{{\Scale[4.8]{\text{\rm (e)}}}} 
\def\somef{{\Scale[4.8]{\text{\rm (f)}}}} 
\def\psa{\scalebox{11}{\small\rmfamily Pretzel shadow}}
\centering 
\scalebox{0.1}{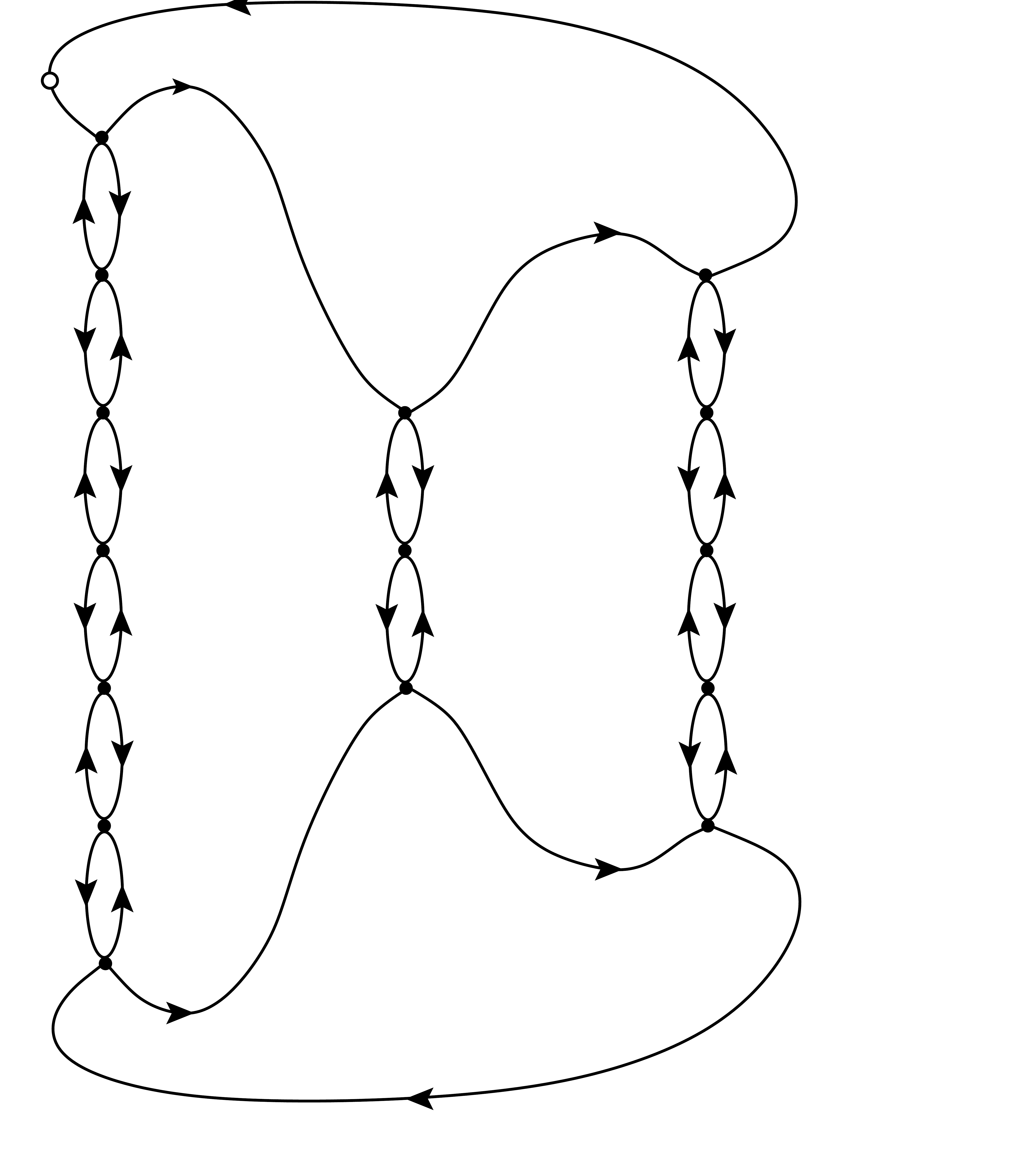}
\caption{Illustration of the proof of Observation~\ref{obs:pretzel1}.}
\label{fig:pretzelgauss1} 
\end{figure} 
% **************************************************************

\begin{corollary}[Identifying pretzel shadows]\label{cor:pretzel1}
Let $S$ be a knot shadow, and let $\Gamma$ be a Gauss code of $S$. Then $S$ is equivalent to a pretzel shadow if and only if there is a cyclic translation of $\Gamma$ of the form $A\,B\,C\,\Neg{\rev{A}}\,\Neg{\rev{B}}\,\Neg{\rev{C}}$, where $A,B$ and $C$ have odd length and $ABC$ is alternating.
\end{corollary}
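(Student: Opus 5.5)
The argument follows the model of Corollary~\ref{cor:torus1}: we reduce congruence to a statement about cyclic translations alone, with Observation~\ref{obs:pretzel1} as the starting point.

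\emph{The ``if'' direction.} Suppose some cyclic translation of $\Gamma$ has the form $A\,B\,C\,\Neg{\rev{A}}\,\Neg{\rev{B}}\,\Neg{\rev{C}}$, with $A,B,C$ of odd length and $ABC$ alternating. The only thing missing from Observation~\ref{obs:pretzel1} is that the labels of $ABC$ are pairwise distinct. Every label of $ABC$ reappears in $\Neg{\rev{A}}\,\Neg{\rev{B}}\,\Neg{\rev{C}}$, and each label occurs exactly twice in $\Gamma$. Hence no label can occur twice inside $ABC$. The translated word is congruent to $\Gamma$, so Observation~\ref{obs:pretzel1} shows that $S$ is a pretzel shadow.

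\emph{The ``only if'' direction.} By Observation~\ref{obs:pretzel1}, $\Gamma$ is obtained from a word $W=A\,B\,C\,\Neg{\rev{A}}\,\Neg{\rev{B}}\,\Neg{\rev{C}}$ of the stated kind by some sequence of cyclic translations, reversal, negation and relabelling. Since $\Gamma$ is a knot code, translations commute with the other three operations up to a shift of basepoint. So it suffices to show that the set $\mathcal{F}$ of words of this form, taken with all their cyclic translations, is closed under negation, reversal and relabelling.

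Relabelling preserves the form trivially. For negation, $\Neg{W}=\Neg{A}\,\Neg{B}\,\Neg{C}\,\rev{A}\,\rev{B}\,\rev{C}$, which is of the form with the triple $(\Neg{A},\Neg{B},\Neg{C})$. The lengths are unchanged, and $\Neg{A}\,\Neg{B}\,\Neg{C}$ is still alternating.

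\emph{Reversal, the main computation.} We have
$$\rev{W}=C^{-}\,B^{-}\,A^{-}\,\rev{C}\,\rev{B}\,\rev{A},$$
where $X^{-}$ abbreviates $\Neg{X}$ (there is no double reversal). Set $A'=\Neg{C}$, $B'=\Neg{B}$ and $C'=\Neg{A}$. Then $\Neg{\rev{A'}}=\rev{C}$, and similarly for the other two, so $\rev{W}=A'B'C'\,\Neg{\rev{A'}}\,\Neg{\rev{B'}}\,\Neg{\rev{C'}}$. It remains to check that $A'B'C'=\Neg{C}\,\Neg{B}\,\Neg{A}$ is alternating, and this is where the parity hypothesis enters.

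Within each block the signs alternate, so we only need to check the two junctions. Each of $A,B,C$ has odd length, so each begins and ends with the same sign. Since $ABC$ is alternating, the last sign of $A$ is opposite to the first sign of $B$, which equals the last sign of $B$; that is opposite to the first sign of $C$. Thus $A$, $B$, $C$ have end-signs $\varepsilon$, $-\varepsilon$, $\varepsilon$ respectively. Then $C,B,A$ carry end-signs $\varepsilon,-\varepsilon,\varepsilon$, and negating gives $-\varepsilon,\varepsilon,-\varepsilon$, so $A'B'C'$ alternates across both junctions. This verification is the only delicate step. It is precisely why odd lengths are required, and it is also why reversal needs the relabelling of blocks as $C\mapsto A'$, $A\mapsto C'$ rather than the naive identification.

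Consequently $\Gamma$ is a cyclic translation of some word in $\mathcal{F}$, which proves the corollary.
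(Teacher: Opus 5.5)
Your proof is correct and follows the paper's argument. Like the paper, you use Observation~\ref{obs:pretzel1} to reduce to closure of the normal form under negation, relabelling and reversal (with the same identity $\rev{W}=\Neg{C}\,\Neg{B}\,\Neg{A}\,\rev{C}\,\rev{B}\,\rev{A}$), and you get distinct labels from the four-occurrences count; you also spell out the commutation of cyclic translations with the other operations and the junction-sign check, which the paper leaves implicit.

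One small overstatement: the junction check only needs $|A|,|B|,|C|$ to have the same parity (all even lengths would also pass). So odd length is sufficient for the reversal step, not ``precisely'' what it requires.
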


\begin{proof}
By Observation~\ref{obs:pretzel1}, it suffices to note that negation, reversal, and relabelling preserve the stated form, up to cyclic translation. For reversal, one has
$$
\rev{\bigl(
ABC\Neg{\rev A}\Neg{\rev B}\Neg{\rev C}
\bigr)}
=
\Neg{C}\,\Neg{B}\,\Neg{A}\,\rev{C}\,\rev{B}\,\rev{A},
$$
\noindent which, since $A,B,C$ have odd length and $ABC$ is alternating, is again of the required form. Finally, the vertex labels of $ABC$ are automatically pairwise distinct, as a label occurring twice in $ABC$ would occur four times in $\Gamma$.
\end{proof}

% ****************************************************************************************
\subsection{Gauss codes of nutcracker shadows}\label{sub:nut1}

We finally lay out how to identify whether an input shadow is a nutcracker shadow.

\begin{observation}[Nutcracker shadows]\label{obs:nut1}
Let $S$ be a knot shadow, and let $\Gamma$ be a Gauss code of $S$. Then $S$ is equivalent to a nutcracker shadow if and only if $\Gamma$ is congruent to
$$
A\,B\,\Neg{\rev{A}}\,\Neg{\rev{B}},
$$
where $A,B$ are words of positive even length, each of $A$ and $B$ is alternating, the last entry of $A$ and the first entry of $B$ have the same sign, and the vertex labels of $AB$ are pairwise distinct.
\end{observation}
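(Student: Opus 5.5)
The plan mirrors the proofs of Observations~\ref{obs:torus1} and~\ref{obs:pretzel1}. First I will fix a concrete labelling and traversal of $N(2s,2t)$ and read off its Gauss code; then I will show that every word pair $A,B$ meeting the stated conditions arises from that code by relabelling and possibly negation. Proposition~\ref{pro:carter} then yields the equivalence.

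\emph{Step 1: the Gauss code of $N(2s,2t)$.} Start the traversal at a point on the edge joining $b_{2t}$ to $a_1$, heading towards $a_1$. The traversal runs straight through the chain $a_1,\ldots,a_{2s}$, since each consecutive pair is joined by a double edge and the walk zigzags through them. It then leaves $a_{2s}$ along one of the single edges to the $B$ chain, say to $b_1$, and runs through $b_1,\ldots,b_{2t}$. From $b_{2t}$ it comes back along the other single edge into the $A$ chain at $a_{2s}$, and traverses the $A$ chain in reverse order. Finally it passes from $a_1$ to $b_1$ and traverses the $B$ chain in reverse order, ending back at the starting point.

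Inside a chain, consecutive crossings alternate in sign, exactly as in the torus and pretzel cases. On the second pass through a chain, every vertex is met via its other incoming edge, so every sign is flipped. The code therefore has the form
\[
A\,B\,\Neg{\rev{A}}\,\Neg{\rev{B}},
\]
with $A=a_1^{+}a_2^{-}\cdots a_{2s}^{-}$ and $B=b_1^{\mp}\cdots$ alternating. The remaining point in this step is the sign relation at the junction between $a_{2s}$ and $b_1$. I will check it against Figure~\ref{fig:nuts}, using the convention of Figure~\ref{fig:gauss1}. The intended outcome is that the last entry of $A$ and the first entry of $B$ carry the same sign, so the whole word $AB$ is \emph{not} alternating. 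This matches the geometry: after the single edge the walk turns onto the other chain, so the rotation sense relative to the next crossing does not change. A consistency check is that both $A$ and $B$ have even length, so the total length $2(2s+2t)$ is correct and the closing junctions obey the same rule.

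\emph{Step 2: the converse.} Let $A,B$ be words of positive even lengths $2s$ and $2t$, each alternating, with the junction signs equal and all labels distinct. The sign pattern of $A$ is then fully determined by its first sign, and that first sign also fixes the pattern of $B$ through the junction condition. Consequently every admissible pair $(A,B)$ coincides with the pair from Step 1 up to relabelling, and up to global negation if the first sign of $A$ is $-$. Negation is a congruence operation, so $A\,B\,\Neg{\rev{A}}\,\Neg{\rev{B}}$ is congruent to a Gauss code of $N(2s,2t)$. Proposition~\ref{pro:carter} then gives both directions of the observation.

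The main obstacle is the sign bookkeeping in Step 1, especially at the two junctions between the chains. The claim that the second pass through each chain gives exactly $\Neg{\rev{\cdot}}$, and not $\rev{\cdot}$, must be verified against the figure's orientation convention. I will check this on the smallest case $N(2,2)$ by hand, and confirm that this small code is a legitimate planar Gauss code.
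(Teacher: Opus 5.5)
Your strategy matches the paper's: read a Gauss code of $N(2s,2t)$ off a labelled traversal, note that every admissible pair $(A,B)$ is that one up to relabelling and global negation, and conclude via Proposition~\ref{pro:carter}. Your Step~2 is exactly the paper's closing argument. The problem is the traversal in Step~1: it is not the straight-ahead walk, and it does not yield the code you claim.

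The exit from $a_{2s}$ is not a free choice (``say to $b_1$''). In $N(2s,2t)$ each pass through one chain enters from and leaves towards the \emph{same} end of the other chain. So a walk that enters $a_1$ from $b_{2t}$ must leave $a_{2s}$ towards $b_{2t}$. With your routing, both passes through $B$ begin at $b_1$, via the edges $a_{2s}b_1$ and $a_1b_1$. To close up on the starting edge $b_{2t}a_1$, the second pass must therefore again run $b_1\to b_{2t}$. Taken literally, your walk gives $A\,B\,\Neg{\rev{A}}\,\Neg{B}$, not $A\,B\,\Neg{\rev{A}}\,\Neg{\rev{B}}$. Your sentence ``traverses the $B$ chain in reverse order'' contradicts the edges you chose.

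The fix is to start on the edge joining $b_1$ to $a_1$. The walk is then $a_1\cdots a_{2s}$, edge $a_{2s}b_1$, $b_1\cdots b_{2t}$, edge $b_{2t}a_{2s}$, $a_{2s}\cdots a_1$, edge $a_1b_{2t}$, $b_{2t}\cdots b_1$, edge $b_1a_1$. This is the paper's code with $A=a_1^+a_2^-\cdots a_{2s}^-$ and $B=b_1^-b_2^+\cdots b_{2t}^+$.

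Once the order of the passes is right, you get $\Neg{\rev{\cdot}}$ rather than $\rev{\cdot}$ automatically, since every vertex occurs once with each sign. Alternation inside a chain also holds whichever directions the two passes take. So the only facts you genuinely need from the picture are that each chain is traversed once in each direction and that the signs agree at the junctions. Your ``rotation sense'' remark is not an argument for the junction signs. A direct check on $N(2,2)$ (the figure-eight shadow) settles it, which is also what the paper effectively does by reading it off the figure.
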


\begin{proof}
With the vertex labelling and traversal illustrated in Figure~\ref{fig:nutgauss1}, we obtain for $N(2s,2t)$ the Gauss code
$$
A\,B\,\Neg{\rev{A}}\,\Neg{\rev{B}},
$$
where
$$
A:=a_1^+a_2^-\cdots a_{2s}^-,
\qquad
B:=b_1^-b_2^+\cdots b_{2t}^+.
$$
Every choice of $A,B$ satisfying the conditions in the observation is obtained from this one by relabelling the vertices and possibly negating all signs. Thus the observation follows from Proposition~\ref{pro:carter}.
\end{proof}

% **************************************************************
\begin{figure}[htbp] 
\def\ta#1{{\Scale[5.0]{#1}}} 
\def\tb#1{{\Scale[3.6]{#1}}} 
\def\td#1{{\Scale[4.6]{#1}}} 
\def\Ap#1{{\Scale[4.6]{a_{{}_{#1}}}}} 
\def\Bp#1{{\Scale[4.6]{b_{{}_{#1}}}}} 
\def\somea{{\Scale[4.8]{\text{\rm (a)}}}} 
\def\someb{{\Scale[4.8]{\text{\rm (b)}}}} 
\def\somec{{\Scale[4.8]{\text{\rm (c)}}}} 
\def\somed{{\Scale[4.8]{\text{\rm (d)}}}} 
\def\somee{{\Scale[4.8]{\text{\rm (e)}}}} 
\def\somef{{\Scale[4.8]{\text{\rm (f)}}}} 
\def\psa{\scalebox{11}{\small\rmfamily Nutcracker shadow}}
\centering 
\scalebox{0.1}{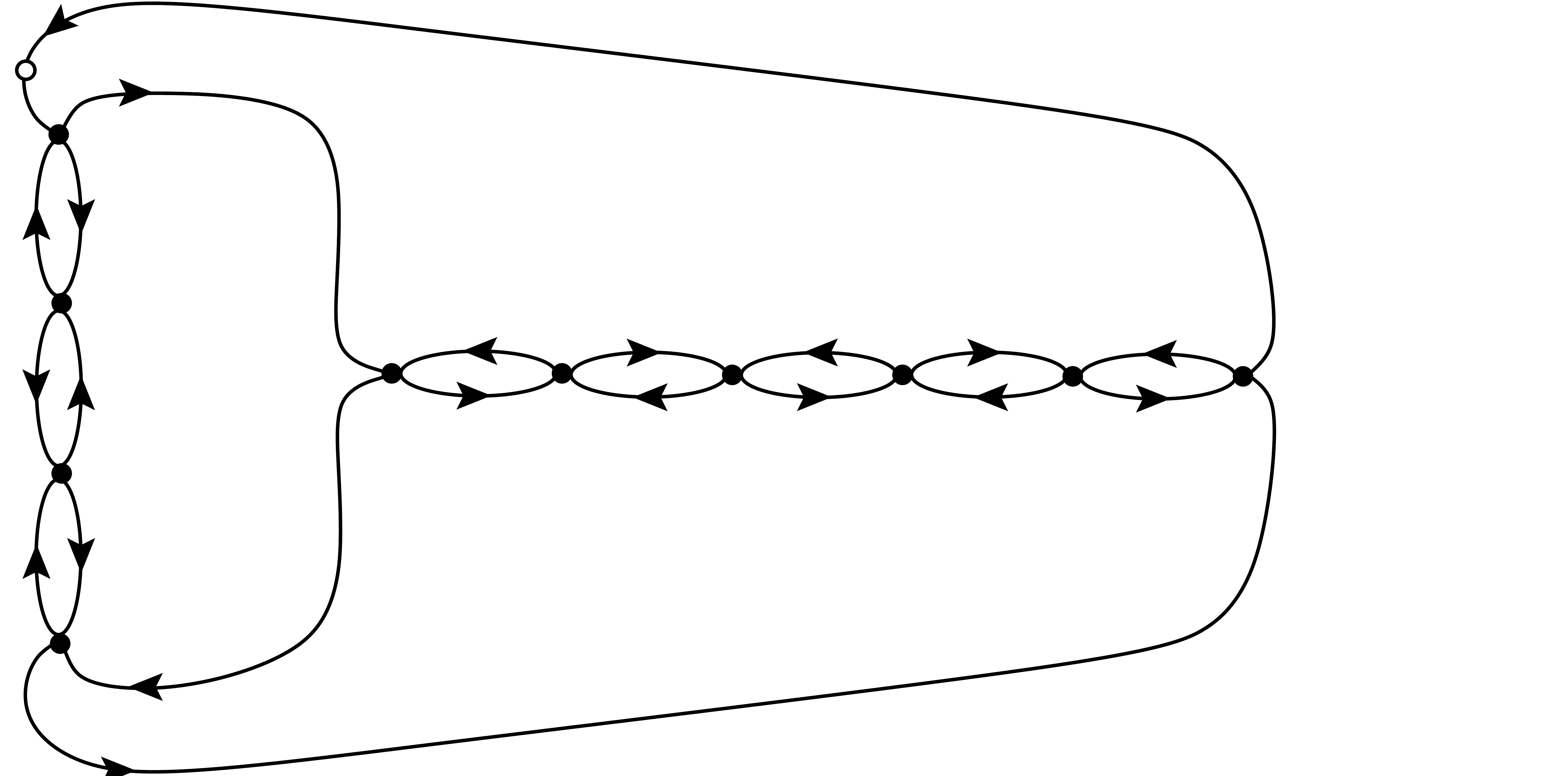}
\caption{Illustration of the proof of Observation~\ref{obs:nut1}.}
\label{fig:nutgauss1} 
\end{figure} 
% **************************************************************

\begin{corollary}[Identifying nutcracker shadows]\label{cor:nut1}
Let $S$ be a knot shadow, and let $\Gamma$ be a Gauss code of $S$. Then $S$ is equivalent to a nutcracker shadow if and only if there is a cyclic translation of $\Gamma$ of the form
$$
A\,B\,\Neg{\rev{A}}\,\Neg{\rev{B}},
$$
where $A$ and $B$ have positive even length, each of $A$ and $B$ is alternating, and the last entry of $A$ and the first entry of $B$ have the same sign.
\end{corollary}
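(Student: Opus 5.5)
The plan is to mirror the proof of Corollary~\ref{cor:pretzel1}. By Observation~\ref{obs:nut1}, $S$ is equivalent to a nutcracker shadow if and only if $\Gamma$ is congruent to a word $A\,B\,\Neg{\rev{A}}\,\Neg{\rev{B}}$ satisfying the listed conditions, and that observation also requires pairwise distinct labels in $AB$. Congruence is generated by cyclic translation, reversal, negation and relabelling. It therefore suffices to show two things. First, the class $\mathcal{N}$ of words of the stated form (even positive lengths, $A$ and $B$ alternating, last entry of $A$ and first entry of $B$ of the same sign) is closed, up to cyclic translation, under negation, reversal and relabelling. Second, the distinctness condition is automatic.

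For the ``if'' direction, suppose a cyclic translation of $\Gamma$ has the stated form. Every label occurring twice in $AB$ would occur four times in $\Gamma$, because $\Neg{\rev{A}}\,\Neg{\rev{B}}$ repeats the labels of $AB$. Hence the labels of $AB$ are pairwise distinct. Since a cyclic translation is a congruence, Observation~\ref{obs:nut1} applies.

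For the ``only if'' direction, $\Gamma$ is obtained from some $W=A\,B\,\Neg{\rev{A}}\,\Neg{\rev{B}}\in\mathcal{N}$ by a sequence of the four operations. These operations commute with each other up to cyclic translation, so it is enough to check closure under each one.

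\emph{Relabelling and negation.} Relabelling clearly preserves every condition. Negation gives $\Neg{A}\,\Neg{B}\,\rev{A}\,\rev{B}=A'\,B'\,\Neg{\rev{A'}}\,\Neg{\rev{B'}}$ with $A'=\Neg{A}$ and $B'=\Neg{B}$. This preserves lengths and alternation, and it flips both the last sign of $A$ and the first sign of $B$, so they still agree.

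\emph{Reversal.} Reversal gives
$$\rev{W}=B^{-}\,A^{-}\,\rev{B}\,\rev{A},$$
where $B^{-}$ and $A^{-}$ denote $\Neg{B}$ and $\Neg{A}$. Put $A'=\Neg{B}$ and $B'=\Neg{A}$. Then $\Neg{\rev{A'}}=\rev{B}$ and $\Neg{\rev{B'}}=\rev{A}$, so $\rev{W}$ has the required shape. The lengths are even and $A',B'$ are alternating. It remains to check that the last entry of $A'$ and the first entry of $B'$ have the same sign, i.e.\ that the last entry of $B$ and the first entry of $A$ agree in sign. Let $\varepsilon$ be the sign of the first entry of $A$. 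Since $A$ is alternating of even length, its last sign is $-\varepsilon$. So the first sign of $B$ is $-\varepsilon$, and since $B$ is alternating of even length, its last sign is $\varepsilon$, as needed. This parity bookkeeping is the only nontrivial point, and it is where the evenness hypothesis is used.

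\emph{Cyclic translation.} Closure under cyclic translation is automatic, since the statement already allows an arbitrary cyclic translation of $\Gamma$. Combining the four cases proves the corollary.
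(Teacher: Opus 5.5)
Your proposal is correct and follows the paper's proof. You reduce to Observation~\ref{obs:nut1}, check that negation, relabelling and reversal (via $\rev{W}=\Neg{B}\,\Neg{A}\,\rev{B}\,\rev{A}$) preserve the form up to cyclic translation, and note that distinctness of the labels of $AB$ is automatic; your parity check that the last sign of $B$ equals the first sign of $A$ just spells out what the paper asserts in one line.
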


\begin{proof}
By Observation~\ref{obs:nut1}, it suffices to note that negation, reversal, and relabelling preserve the stated form, up to cyclic translation. For reversal, one has
$$
\rev{\bigl(
AB\Neg{\rev A}\Neg{\rev B}
\bigr)}
=
\Neg B\,\Neg A\,\rev B\,\rev A,
$$
which, since $A,B$ have even length and satisfy the stated sign condition, is again of the required form. Finally, the vertex labels of $AB$ are automatically pairwise distinct, as a label occurring twice in $AB$ would occur four times in $\Gamma$.
\end{proof}

% ****************************************************************************************
\subsection{Gauss paragraphs  of two-component torus shadows}\label{sub:torus2}

\begin{observation}[Two-component torus shadows]\label{obs:torus2}
Let $S$ be a connected $2$-component link shadow with $p$ vertices, where $p$ is even, and let $\BG$ be a Gauss paragraph of $S$. Then $S$ is equivalent to $T(2,p)$ if and only if $\BG$ is congruent to
$$
\{A,\Neg{A}\},
$$
for some alternating word $A$ of length $p$ whose vertex labels are pairwise distinct.
\end{observation}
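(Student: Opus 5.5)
We follow the same route as in Observation~\ref{obs:torus1}: compute one explicit Gauss paragraph of $T(2,p)$, and then show that the conditions in the statement single out exactly the congruence class of that paragraph.

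\textbf{Step 1 (explicit paragraph).} For even $p$, label the vertices of $T(2,p)$ as $a_1,\ldots,a_p$ cyclically, as in Figure~\ref{fig:torusgauss1}. Orient the two components as in that figure. Each component passes once through every vertex, in the order $a_1,a_2,\ldots,a_p$. At consecutive vertices, the traversal enters through incoming edges of opposite sign, because the twisted band alternates. We therefore expect the first component to have word $A:=a_1^+a_2^-\cdots a_p^-$, which is alternating, and since $p$ is even it is also cyclically alternating.

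The second component passes through the same vertices in the same cyclic order. At each vertex $a_i$ the two occurrences carry opposite signs. Hence the second word is $\Neg{A}$, for a suitable choice of basepoint and direction on that component. If the figure's orientation of the second component runs the other way, a single reversal fixes this. So $T(2,p)$ has Gauss paragraph $\{A,\Neg{A}\}$. This is the step where we must look carefully at the figure: we need to confirm that both components can be oriented so that the second word is exactly $\Neg{A}$, with the same cyclic order, rather than some other pattern.

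\textbf{Step 2 (only one form up to relabelling).} Take any alternating word $A'$ of length $p$ with pairwise distinct labels. Its sign pattern is either $+-\cdots-$ or $-+\cdots+$. So $A'$ is obtained from $A$ by relabelling, possibly followed by negating both words. Negating both words is operation~(iii) and preserves the form $\{A,\Neg{A}\}$. Consequently every paragraph of the form $\{A',\Neg{A'}\}$ is congruent to the paragraph of $T(2,p)$ found in Step 1.

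\textbf{Step 3 (apply Proposition~\ref{pro:carterlinks}).} Both $S$ and $T(2,p)$ are connected $2$-component link shadows. Proposition~\ref{pro:carterlinks} therefore says that $S$ is equivalent to $T(2,p)$ if and only if $\BG$ is congruent to a paragraph of $T(2,p)$. By Steps 1 and 2, this is the same as $\BG$ being congruent to $\{A',\Neg{A'}\}$ for some alternating $A'$ of length $p$ with pairwise distinct labels. This gives both directions of the statement.

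\textbf{Main obstacle.} The only nontrivial check is the sign bookkeeping in Step 1. In the link case, a single reversal flips the signs of every shared vertex, and here every vertex is shared. So we must verify that the sign convention of Figure~\ref{fig:gauss1}, applied to the second component, produces exactly the negated signs at every vertex rather than the same signs. I would first check this directly for $p=2$ and $p=4$ using the figure. I would then argue by the periodicity of the twist region that the pattern persists for all even $p$.
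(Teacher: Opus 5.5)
Your proposal is correct and follows essentially the same route as the paper. Both arguments exhibit the paragraph $\{A,\Neg{A}\}$ of $T(2,p)$ from the figure, note that any alternating word with distinct labels arises from $A$ by relabelling and possibly negation, and conclude via Proposition~\ref{pro:carterlinks}. The sign check you flag as the main obstacle is automatic: once both components are oriented to traverse $a_1,\ldots,a_p$ in the same cyclic order, the fact that each vertex occurs once with each sign forces the second word to be $\Neg{A}$.
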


\begin{proof}
As illustrated for $T(2,8)$ in Figure~\ref{fig:torusgauss1}, suitable choices of basepoints and traversal directions give for $T(2,p)$ the Gauss paragraph
$$
\{A,\Neg{A}\},
\qquad
A:=a_1^+a_2^-a_3^+\cdots a_p^-.
$$
Every alternating word of length $p$ with pairwise distinct vertex labels is obtained from $A$ by a relabelling and, possibly, negation. The result follows from Proposition~\ref{pro:carterlinks}.
\end{proof}

\begin{corollary}[Identifying two-component torus shadows]
\label{cor:torus2}
Let $S$ be a $2$-component link shadow with $p$ vertices, where $p$ is even, and let $\{\Gamma_1,\Gamma_2\}$ be a Gauss paragraph of $S$. Then $S$ is equivalent to $T(2,p)$ if and only if $\Gamma_1$ is alternating and $\Gamma_2$ is a cyclic translation of either
$$
\Neg{\Gamma_1}
\qquad\text{or}\qquad
\Neg{\rev{\Gamma_1}}.
$$
\end{corollary}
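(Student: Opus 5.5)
The plan is to derive the corollary from Observation~\ref{obs:torus2}. For the forward direction I will show that the property in the statement is invariant under the congruence operations. For the converse I will exhibit an explicit congruence to a word pair of the form $\{A,\Neg{A}\}$.

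Let $\mathcal{P}$ denote the following property of a Gauss paragraph $\{\Gamma_1,\Gamma_2\}$: $\Gamma_1$ is alternating, and $\Gamma_2$ is a cyclic translation of $\Neg{\Gamma_1}$ or of $\Neg{\rev{\Gamma_1}}$. Two preliminary facts are needed. First, since $p$ is even, an alternating word of length $p$ is cyclically alternating, so every cyclic translation of it is again alternating. Second, $\mathcal{P}$ is symmetric in $\Gamma_1,\Gamma_2$, which matters because the paragraph is unordered. Indeed, if $\Gamma_2$ is a cyclic translation of $\Neg{\Gamma_1}$, then $\Gamma_1$ is a cyclic translation of $\Neg{\Gamma_2}$. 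Likewise, if $\Gamma_2$ is a cyclic translation of $\Neg{\rev{\Gamma_1}}$, then $\Gamma_1$ is a cyclic translation of $\Neg{\rev{\Gamma_2}}$. In both cases $\Gamma_2$ is alternating as well.

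\emph{Forward direction.} If $S$ is equivalent to $T(2,p)$, then $S$ is connected. By Observation~\ref{obs:torus2}, $\BG$ is congruent to $\{A,\Neg{A}\}$ with $A$ alternating, and this paragraph satisfies $\mathcal{P}$. It therefore suffices to check that $\mathcal{P}$ is preserved by each congruence operation.

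\begin{itemize}
\item Relabelling and global negation clearly preserve $\mathcal{P}$.
\item Cyclic translation of either word preserves $\mathcal{P}$, using cyclic alternation and symmetry.
\item For a single reversal at $\Gamma_1$, note that under $\mathcal{P}$ every vertex occurs exactly once in each word. The reversal therefore yields $\Gamma_1'=\Neg{\rev{\Gamma_1}}$ and $\Gamma_2'=\Neg{\Gamma_2}$. Then $\Gamma_1'$ is alternating, being the reversal of an alternating word with all signs flipped.
\begin{itemize}
\item If $\Gamma_2$ is a cyclic translation of $\Neg{\Gamma_1}$, then $\Gamma_2'$ is a cyclic translation of $\Gamma_1=\Neg{\rev{\Gamma_1'}}$.
\item If $\Gamma_2$ is a cyclic translation of $\Neg{\rev{\Gamma_1}}$, then $\Gamma_2'$ is a cyclic translation of $\rev{\Gamma_1}=\Neg{\Gamma_1'}$.
\end{itemize}
\item A single reversal at $\Gamma_2$ is handled by the symmetry of $\mathcal{P}$.
\end{itemize}
Hence the given paragraph $\{\Gamma_1,\Gamma_2\}$ satisfies $\mathcal{P}$.

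\emph{Converse.} Assume $\mathcal{P}$ holds. Every label of $\Gamma_1$ also occurs in $\Gamma_2$, and each label occurs exactly twice in the paragraph. Hence the labels of $\Gamma_1$ are pairwise distinct, and the two words share all $p$ vertices. In particular $S$ is connected, so Observation~\ref{obs:torus2} applies.
\begin{itemize}
\item If $\Gamma_2$ is a cyclic translation of $\Neg{\Gamma_1}$, one cyclic translation gives $\{\Gamma_1,\Neg{\Gamma_1}\}$, and we are done.
\item Otherwise $\Gamma_2$ is a cyclic translation of $\Neg{\rev{\Gamma_1}}$. Perform a single reversal at $\Gamma_2$; since all vertices are shared, this flips every sign in the paragraph. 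It produces $\Neg{\Gamma_1}$ together with a cyclic translation of $\Gamma_1$. With $A:=\Neg{\Gamma_1}$, which is alternating, the paragraph becomes $\{A,\Neg{A}\}$ after one cyclic translation.
\end{itemize}

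The only delicate point I expect is the sign bookkeeping in the single reversal. It relies on every vertex being shared by both words, which must be established (from $\mathcal{P}$ and label counting) before the reversal is applied.
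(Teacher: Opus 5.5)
Your proof is correct and takes the same route as the paper. It derives connectivity from the fact that both words carry all $p$ labels, applies Observation~\ref{obs:torus2}, and uses that a single reversal flips every sign once all vertices are shared. The difference is that you make explicit what the paper compresses into ``the definition of congruence'': that the property is invariant under each congruence operation, using that alternating words of even length are cyclically alternating and that the property is symmetric in the two words of the unordered paragraph.
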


\begin{proof}
The assumptions on $\Gamma_1$ and $\Gamma_2$ imply that $\Gamma_1$ and $\Gamma_2$ have the same vertex labels, and so every vertex is shared by the two components and $S$ is connected. The corollary then follows from Observation~\ref{obs:torus2} and the definition of congruence. Indeed, a single reversal reverses one component word, and since every vertex is shared this action changes the signs in both words, thus yielding the two alternatives above.
\end{proof}

% ****************************************************************************************

% ****************************************************************************************
% ****************************************************************************************
\section{Proofs of Lemmas~\ref{lem:testknot},~\ref{lem:test2link}, and~\ref{lem:test3link}}\label{sec:equivtests}

We first make one simple observation that will be useful for testing the pretzel and nutcracker normal forms of Section~\ref{sec:normalforms}. Let
$$
\Gamma=x_1x_2\cdots x_{2n},
$$
where indices are read modulo $2n$. For each $i$, let $m(i)$ be the position of the other occurrence of the vertex label of $x_i$. We call the gap after $x_i$ a {\em break} if
$$
m(i+1)\not\equiv m(i)-1\pmod {2n}.
$$
The values $m(i)$, and hence all the breaks, can clearly be computed in $O(n)$ time.

The pretzel and the nutcracker normal forms are both of the form $X_1\cdots X_k\,\Neg{\rev{X_1}}\cdots\Neg{\rev{X_k}}$. For a Gauss code of this form, if we write $q_j:=|X_j|$ and $Q_j:=q_1+\cdots+q_j$, so that $n=Q_k$, we have that
$$
m(Q_{j-1}+r)=n+Q_j+1-r \qquad \text{for } 1\le r\le q_j.
$$

Using this identity we easily obtain the following crucial observation.

\begin{observation}\label{obs:breaks}
Let $k\ge2$, let $X_1,\ldots,X_k$ be nonempty words, and suppose that
$$
\Gamma=X_1\cdots X_k\,\Neg{\rev{X_1}}\cdots\Neg{\rev{X_k}}
$$
is a Gauss code. Then the breaks of $\Gamma$ are exactly the $2k$ boundaries between the displayed blocks. In particular, a Gauss code in the pretzel normal form of Corollary~\ref{cor:pretzel1} has exactly six breaks, and a Gauss code in the nutcracker normal form of Corollary~\ref{cor:nut1} has exactly four breaks.
\end{observation}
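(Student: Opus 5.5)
We need to prove Observation~\ref{obs:breaks}: for $\Gamma=X_1\cdots X_k\,\Neg{\rev{X_1}}\cdots\Neg{\rev{X_k}}$ with $k\ge 2$ and all $X_j$ nonempty, the breaks are exactly the $2k$ block boundaries. The plan is to compute $m(i)$ explicitly from the identity $m(Q_{j-1}+r)=n+Q_j+1-r$ stated just before the observation, together with its mirror version for positions in the second half. Then we check, gap by gap, whether $m(i+1)\equiv m(i)-1$.

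\textbf{Step 1: the formula for $m$.} Position $Q_{j-1}+r$ in the first half carries the $r$-th letter of $X_j$. That letter reappears in $\Neg{\rev{X_j}}$, which occupies positions $n+Q_{j-1}+1,\ldots,n+Q_j$, and it sits at index $q_j+1-r$ inside that block. This gives $m(Q_{j-1}+r)=n+Q_{j-1}+q_j+1-r=n+Q_j+1-r$. Since $m$ is an involution, for positions in the second half we have $m(n+Q_{j-1}+s)=Q_j+1-s$ for $1\le s\le q_j$.

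\textbf{Step 2: gaps inside a block.} Consider a gap between positions $i$ and $i+1$ that both lie in the same block. In the first half, write $i=Q_{j-1}+r$ and $i+1=Q_{j-1}+r+1$. Then $m(i+1)=m(i)-1$ directly from the formula. The same holds in the second half. So no interior gap is a break.

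\textbf{Step 3: gaps at block boundaries.} There are three kinds of boundary.

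For a boundary inside the first half, take $i=Q_j$ with $1\le j<k$. Then $m(i)=n+Q_{j-1}+1$ (the case $r=q_j$), and $m(i+1)=n+Q_{j+1}$ (the case $r=1$ in block $j+1$). For this gap not to be a break we would need $n+Q_{j+1}\equiv n+Q_{j-1}\pmod{2n}$, that is, $q_j+q_{j+1}\equiv 0\pmod{2n}$. But $0<q_j+q_{j+1}\le n<2n$, so this is impossible and the gap is a break.

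For the boundary at $i=n$, between $X_k$ and $\Neg{\rev{X_1}}$, we have $m(n)=n+Q_{k-1}+1$ and $m(n+1)=Q_1$. Non-break would require $Q_1\equiv n+Q_{k-1}$. Here $Q_1\le n-q_k<n$, while $n+Q_{k-1}\in[n+1,2n-1]$, because $k\ge2$ and every block is nonempty. These ranges do not overlap, so the gap is a break.

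The boundaries inside the second half, and the wraparound boundary at $i=2n$, are handled by symmetric computations. An alternative is to note that the cyclic shift by $n$, followed by relabelling and negation, maps the second half onto the first in the same block pattern, so the same argument applies.

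\textbf{Conclusion.} The breaks are exactly the $2k$ boundaries. Taking $k=3$ for the pretzel normal form gives six breaks, and taking $k=2$ for the nutcracker normal form gives four.

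\textbf{Main obstacle.} The only delicate point is the mod-$2n$ bookkeeping at the two boundaries that straddle the halves (at $i=n$ and $i=2n$). This is where the hypotheses $k\ge2$ and that every block is nonempty are genuinely used. Indeed, with $k=1$ those two gaps would not be breaks.
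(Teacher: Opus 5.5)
Your proof is correct and follows the paper's route. The paper derives the observation directly from the identity $m(Q_{j-1}+r)=n+Q_j+1-r$, and you carry out the gap-by-gap check (interior gaps, boundaries within each half, and the two straddling gaps at $i=n$ and $i=2n$) that the paper leaves implicit; your symmetry shortcut also works without relabelling or negation, since the cyclic shift by $n$ turns $\Gamma$ into $Y_1\cdots Y_k\,\Neg{\rev{Y_1}}\cdots\Neg{\rev{Y_k}}$ with $Y_j=\Neg{\rev{X_j}}$, and breaks depend only on $m$.
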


\begin{proof}[Proof of Lemma~\ref{lem:testknot}(i)]
We first test the parity of $n$: if $n$ is even, then $S$ is not a torus shadow, and so we are done. Suppose then that $n$ is odd, and let $B$ be the first $n$ symbols of $\Gamma$. By Corollary~\ref{cor:torus1} it suffices to check that $B$ is alternating and that the last $n$ symbols of $\Gamma$ form exactly the word $\Neg{B}$. Since both conditions can be verified in $O(n)$ time, we are done.
\end{proof}

\begin{proof}[Proof of Lemma~\ref{lem:testknot}(ii)]
In order to use Observation~\ref{obs:breaks} we start by computing all the breaks of $\Gamma$. It is easy to see that this can be done in $O(n)$ time.

By Corollary~\ref{cor:pretzel1} and Observation~\ref{obs:breaks}, if $S$ is a pretzel shadow then $\Gamma$ has exactly six breaks,
namely the boundaries of
$$
A,\ B,\ C,\ \Neg{\rev A},\ \Neg{\rev B},\ \Neg{\rev C}.
$$
If there are not exactly six breaks then there is nothing else to do. Thus let us assume that there are indeed exactly six breaks.

There are only six possible cyclic translations to consider, one beginning immediately after each break. Fix one of these six cyclic translations, and let $D_1,\ldots,D_6$ be the six blocks determined by the breaks. We now check whether
$$
|D_1|,\ |D_2|,\ |D_3|
$$
are odd, whether $D_1D_2D_3$ is alternating, and whether
$$
D_4=\Neg{\rev{D_1}},\qquad
D_5=\Neg{\rev{D_2}},\qquad
D_6=\Neg{\rev{D_3}}.
$$

By Corollary~\ref{cor:pretzel1} $S$ is equivalent to a pretzel shadow if and only if one of these translations satisfies these conditions. Since each translation is checked in $O(n)$ time and there are only six of them, the total running time is $O(n)$, as required.
\end{proof}

\begin{proof}[Proof of Lemma~\ref{lem:testknot}(iii)]
The proof is very similar to the proof of (ii). In this case we use Observation~\ref{obs:breaks} and Corollary~\ref{cor:nut1}, and so we need to check whether $\Gamma$ has exactly four breaks. Clearly this can be done in $O(n)$ time, and if the answer is negative then there is nothing else to do.

If $\Gamma$ has exactly four breaks then there are four cyclic translations to consider. We fix one of them and let $D_1,D_2,D_3,D_4$ be the four blocks determined by the breaks. In this case we need to check whether $D_1$ and $D_2$ have positive even length, whether they are both alternating, whether the last entry of $D_1$ and the first entry of $D_2$ have the same sign, and finally whether
$$
D_3=\Neg{\rev{D_1}},
\qquad
D_4=\Neg{\rev{D_2}}.
$$
By Corollary~\ref{cor:nut1} $S$ is equivalent to a nutcracker shadow if and only if one of these four translations satisfies all these conditions. Since each translation is checked in $O(n)$ time and there are four translations to check, the total running time is then $O(n)$.
\end{proof}

\begin{proof}[Proof of Lemma~\ref{lem:testknot}(iv)]
Let $\Gamma_0$ be any Gauss code of $S_0$, and let $n_0:=|S_0|$. By Proposition~\ref{pro:carter} it suffices to check whether the input code $\Gamma$ is congruent to $\Gamma_0$. Since $n_0$ is constant this can clearly be done in constant time, and so we are done.
\end{proof}

\begin{proof}[Proof of Lemma~\ref{lem:test2link}(i)]
We scan the two component words and for each vertex $v$ record which of the two words contains $v$. Note that $v$ is shared by the two components if and only if it occurs once in each word, and so in one scan we can count the total number of shared vertices. The test succeeds if and only if this number is at least four. Finally we note that it is easy to see that the whole process takes $O(n)$ time.
\end{proof}

\begin{proof}[Proof of Lemma~\ref{lem:test2link}(ii)]
Let $\{\Gamma_1,\Gamma_2\}$ be the given input Gauss paragraph. If $n$ is odd then $S$ is not a link torus shadow, and we are done. We may then assume that $n$ is even.

In one scan we verify whether each vertex occurs once in each component word. If this is not the case then we reject the input, and so we are done. Otherwise we continue, with the information that each of $\Gamma_1$ and $\Gamma_2$ has length $n$ and contains each vertex exactly once.

Next we verify whether $\Gamma_1$ is alternating. Since its vertex labels are necessarily pairwise distinct, the first letter of $\Gamma_2$ determines the only possible cyclic alignment with $\Neg{\Gamma_1}$, and also the only possible cyclic alignment with $\Neg{\rev{\Gamma_1}}$. We check these two alignments in one scan each. By Corollary~\ref{cor:torus2} $S$ is a torus shadow if and only if one of them succeeds. This last test also takes $O(n)$ time, and so the total running time is $O(n)$.
\end{proof}

\begin{proof}[Proof of Lemma~\ref{lem:test3link}]
We scan the three component words and for each vertex $v$ record the set of components in which $v$ occurs. Since every vertex occurs exactly twice in the whole Gauss paragraph, a vertex shared by two distinct components occurs once in each of them. During the scan we record which of the three pairs of components share a vertex. The required condition holds if and only if all three pairs satisfy this property. The whole procedure clearly takes $O(n)$ time, and so we are done.
\end{proof}

% ****************************************************************************************
\section{An open question}\label{sec:concluding}

The linear-time algorithms claimed in Theorems~\ref{thm:knots} and~\ref{thm:links} make essential use of the characterizations of the shadows that resolve into a knot in $\{3_1,4_1,5_1,5_2,6_2\}$ and of the shadows that resolve into a link in $\{L2a1,L4a1,L5a1,L6n1\}$. A glance at these characterizations suggests an obvious question:

\vglue 0.4 cm
\noindent{\bf Question. }{\em Let $L$ be a link. Is there a good characterization of the shadows that resolve into $L$?}
\vglue 0.4 cm

An efficiently testable characterization of this kind would imply a polynomial-time algorithm to decide, for each fixed link $L$, whether a given input shadow resolves into $L$. We intentionally write ``polynomial-time'' instead of ``linear-time'' here, as it is conceivable that the running time in Theorems~\ref{thm:knots} and~\ref{thm:links} cannot be matched for arbitrary links, even with a positive answer to this question.

% ****************************************************************************************

\bibliographystyle{abbrv} 
\bibliography{refs} 
\end{document}